\begin{document}
	\theoremstyle{plain}
	\theoremstyle{definition}

\newtheorem{theorem}{Theorem}[section]
\newtheorem{problem}[theorem]{Problem}
\newtheorem{proposition}[theorem]{Proposition}
\newtheorem{corollary}[theorem]{Corollary}
\newtheorem{lemma}[theorem]{Lemma}
\newtheorem{example}[theorem]{Example}
\newtheorem{remark}[theorem]{Remark}
\newtheorem{definition}[theorem]{Definition}
\newtheorem{fact}[theorem]{Fact}
\newtheorem{claim}[theorem]{Claim}
\newtheorem{note}[theorem]{Note}

	\large
	\title[Rough Weighted Ideal Convergence and Korovkin-Type Applications...]{Rough Weighted Ideal Convergence and Korovkin-Type Approximation via weighted equi-ideal convergence}

	\date{}

	\subjclass[2020]{Primary 40A35; 41A36; Secondary 46B15; 46B50}
	
	\keywords{Rough weighted ideal limit set, rough weighted ideal cluster points set, minimal convergent degree, weighted equi-ideal convergence, Korovkin-type approximation theorem.}
	\date{}

\author[Tamim Aziz and Sanjoy Ghosal]{Tamim Aziz and Sanjoy Ghosal}
\thanks{Department of Mathematics, University of North Bengal, Raja Rammohunpur, Darjeeling-734013, West Bengal, India. \emph{E-mail}: tamimaziz99@gmail.com (T. Aziz); sanjoykumarghosal@nbu.ac.in; sanjoyghosalju@gmail.com (S. Ghosal). \\ Orcid: https://orcid.org/0009-0004-5727-0121 (T. Aziz);  https://orcid.org/0000-0001-8563-1941 (S. Ghosal)\\	The research of the first author is supported by Human Resource Development Group, CSIR, India, through NET-JRF and the grant number is 09/0285(12636)/2021-EMR-I}
	
	\setcounter {page}{1}

\begin{abstract} If $\omega_t > \beta$ for every $t \in \mathbb{N}$ and for some $\beta > 0$, then the sequence $\{\omega_t\}_{t \in \mathbb{N}}$ represents a weighted sequence of real numbers. In this article, we primarily introduce the concepts of rough weighted ideal limit set and rough weighted ideal cluster points set associated with sequences in normed spaces. Building on these concepts, we derive several important results, including a characterization of maximal ideals, a representation of closed sets in normed spaces, and an analysis of the minimal convergent degree required for the rough weighted ideal limit set to be non-empty. Furthermore, we demonstrate that for an analytic $P$-ideal, the rough weighted ideal limit set forms an $F_{\sigma\delta}$ subset of the normed space. Finally, we introduce the concept of weighted equi-ideal convergence for sequences of functions with respect to analytic $P$-ideals, extending the notion of equi-statistical convergence [Balcerzak et al., J. Math. Anal. Appl. {328} (1) (2007)]. As an application of this notion, we establish a Korovkin-type approximation theorem that serves both as a generalization of [Theorem 2.4, Karaku{\c{s}} et al., J. Math. Anal. Appl. {339} (2) (2008)] and a correction to [Theorem 2.2, Akdağ, Results Math. {72} (3) (2017)].
	\end{abstract}

	\maketitle

	\section{{Introduction}}
	
	\large
	
	We begin with the concept of a submeasure on $\mathbb{N}$ that plays a very pivotal role in this paper. A map $\varphi:\mathcal{P}(\mathbb{N})\to [0,\infty]$ is a submeasure on $\mathbb{N}$ if (i) $\varphi(\varnothing)=0,$ (ii)	if $A\subseteq B,$ then $\varphi(A)\leq \varphi(B),$ (iii)	 $\varphi(A\cup B)\leq \varphi(A)+\varphi(B),$ and (iv) $\varphi(\{t\})<\infty,$ for all $t\in\mathbb{N}.$ A  submeasure  $\varphi$ is  lower semicontinuous   (briefly, lscsm) if $\varphi(A)=\displaystyle\lim_{t\to\infty}\varphi(A\cap[1,t])$ for all $A\subseteq\mathbb{N}.$
	For each lscsm $\varphi$ on $\mathbb{N},$ the exhaustive ideal $Exh(\varphi),$ generated by $\varphi,$ is defined as follows: 
	\begin{equation*}
		Exh(\varphi)=\{A\subset \mathbb{N}:\lim_{t\to\infty}\varphi(A\setminus\{1,2,...,t\})=0\}.
	\end{equation*}
At this stage, we consider an important category of set-theoretical objects, namely ``ideals".
A family $\mathcal{I}\subset\mathcal{P}(\mathbb{N})$ is referred to as an ideal \cite{kostyrko2000convergence} on $\mathbb{N}$ if it fulfills the following conditions:
\begin{itemize}
	\item $\varnothing\in\mathcal{I}$,
	\item if $A,B\in \mathcal{I}$ then $A\cup B\in\mathcal{I}$,
	\item if $A\subset B$ and $B\in\mathcal{I}$ then $A\in \mathcal{I}$.
\end{itemize}
An ideal $\mathcal{I}$ is called non-trivial if $\mathcal{I}\neq\varnothing$ and $\mathcal{I}\neq\mathbb{N}$. We denote by $\mathcal{I}_{fin}$ the ideal of finite subsets of $\mathbb{N}$ and by $\mathcal{I}_\delta=\{A\subseteq \mathbb{N}: \delta(A)=0\}$ the ideal of subsets of $\mathbb{N}$ with natural density zero, where $\delta$ denotes the natural density  \cite{fast1951convergence, fridy1985statistical, vsalat1980statistically}. A non-trivial ideal $\mathcal{I}$ is considered admissible if $\{t\}\in\mathcal{I}$ for each $t\in\mathbb{N}$, i.e., $\mathcal{I}\supseteq \mathcal{I}_{fin}$. 
Furthermore, an ideal $\mathcal{I}$ is a $P$-ideal \cite{kostyrko2000convergence} if for every sequence $\{A_t\}_{t\in\mathbb{N}}$ of sets in $\mathcal{I}$ there exists  $A\in\mathcal{I}$ such that  $A_t\setminus A$ is finite for each $t\in\mathbb{N}.$ Examples of $P$-ideals include $\mathcal{I}_{fin}$ and $\mathcal{I}_\delta$. A $P$-ideal $\mathcal{I}$ is termed analytic (see \cite{solecki}) if there exists a lscsm $\varphi$ on $\mathbb{N}$ such that
$$
\mathcal{I}=\mathcal{I}_{\varphi}:=\{A\subset\mathbb{N}:\lim_{t\to\infty}\varphi(A\setminus \{1,2,...,t\})=0\}.
$$
 For an ideal $\mathcal{I}$, we will write $\mathcal{F}(\mathcal{I})=\{A\subseteq\mathbb{N}:\mathbb{N}\setminus A\in\mathcal{I}\}$ to denote the dual filter of $\mathcal{I}$.\\

In a different direction, Phu \cite{phu2001rough,xuan2003rough} began an exploration of the theory of rough convergence of a sequence, which serves as an extension of classical convergence in normed spaces, where ``degree of roughness" is recognized as a crucial factor. He also presented several intriguing properties of the set of rough limit points of a sequence in normed spaces, which were quite fascinating and pertinent to this study. Before we proceed, let us formally present the concept of \textit{rough convergence} of a sequence.

\begin{definition}\cite[Page no. 199]{phu2001rough} \cite[Eq (1.1)]{xuan2003rough} Let $r$ be a non-negative real number. A sequence  $\{x_t\}_{t\in \mathbb{N}}$ in a normed space $X$ is said to be  rough convergent  to $x_*$ w.r.t. the degree of roughness $r$ (briefly, $r$-convergence), denoted by $x_t\xrightarrow{r}x_*,$  provided that
	\begin{equation*}~\mbox{for any}~ \varepsilon > 0,~\mbox{there exists}~ t_{\varepsilon} \in \mathbb{N}: t\geq t_{\varepsilon} ~\Rightarrow~ \|x_t - x_*\| \leq  r+ \varepsilon.\end{equation*}
	The  non-negative real number $r$ is known as ``degree of roughness'' and the set $\text{LIM}^r x_i=\left\{x_*\in X: x_t\xrightarrow{r}x_*\right\}$ is called the $r$-limit set of the sequence $\{x_t\}_{t\in \mathbb{N}}.$
\end{definition}
According to \cite[Eq (4.2)]{phu2001rough},  $\tilde{r}(x)\geq 0$ is called minimal degree of roughness or minimal convergent degree of  $x=\{x_t\}_{t\in\mathbb{N}};$  if $x$ is  $r$-convergent for some $r\geq 0,$ then $r\geq \tilde{r}(x)$, otherwise $r<\tilde{r}(x).$ 
For a comprehensive overview of established results on rough convergence, along with relevant references, visit \cite{aytarcore,SGSM2,listan2011characterization,phu2001rough,xuan2003rough}.\\

In 2018, the notion of rough convergence was generalized by employing a broader mode of convergence, namely \textit{rough weighted statistical convergence}. In this context, the idea of a \textit{weighted sequence} and the associated mode of density, referred to as ``weighted density", emerged.
\begin{definition}\cite[Page no. 884]{das2018characterization} \cite[Definition 2]{ghosal2021rough}
Call a sequence $\{\omega_t\}_{t\in\mathbb{N}}$ of real numbers weighted if there exists $\beta>0$ such that $\omega_t>\beta$ for each $t\in \mathbb{N}.$ We denote by $\{\theta_t\}_{t\in\mathbb{N}}$ the sequence of partial sum of $\{\omega_t\}_{t\in\mathbb{N}},$ i.e.,  $\theta_t=\displaystyle\sum_{i=1}^{t}\omega_i.$ Then the weighted density of $A\subseteq\mathbb{N}$ is given by 
	 $$\mathcal{W}\delta(A)=\lim_{t\to\infty}\frac{|A\cap [1, \theta_t]|}{\theta_t},~\mbox{ provided the limit exists}.$$
\end{definition}
It is important to note that (see \cite[Proposition 2]{ghosal2021rough}) the notion of weighted density is quite different from the notion of natural density.

\begin{definition}\cite[Definition 3.3]{das2018characterization}\label{Def 1}\label{defrouwei}
	Let $r\geq 0,$ and let $\{\omega_t\}_{t\in\mathbb{N}}$ be a weighted sequence. Then a real sequence $\{x_t\}_{t\in\mathbb{N}}$  is said to be rough weighted (briefly, $r$-weighted) statistical convergent to $x_*$ w.r.t. the degree of roughness $r$ and the weighted sequence $\{\omega_t\}_{t\in\mathbb{N}}$, denoted by $x_t\xrightarrow[r]{\mathcal{W}st} x_*,$ provided that
$$\mathcal{W}\delta(\{t\in\mathbb{N}:\omega_t|x_t-x_*|> r+\varepsilon\})=0~\mbox{ for any }~\varepsilon>0.$$ 
The collection $\mathcal{W}st-\text{LIM}^rx_t=\left\{x_*\in \mathbb{R}:x_t\xrightarrow[r]{\mathcal{W}st} x_* \right\}$ is called $r$-weighted statistical limit set of $\{x_t\}_{t\in\mathbb{N}}$ in $\mathbb{R}$. 
\begin{itemize}
\item Note that \textit{weighted statistical convergence} arises as a special case when $r=0$ (see \cite[Definition 2.2]{ghosal2016weighted} with $\alpha=1$).
\item One may also observe that the definition of statistical convergence (see \cite{steinhaus1951convergence,tripathy1997statistically}) is reached when $r=0$ and $\omega_t=1$ for all $t\in\mathbb{N}$.
\end{itemize}
For further reading along these lines, the reader is referred to \cite{gho2019, Ghosal2020}, which explore various aspects of rough weighted statistical convergence, and related summability methods.
\end{definition}
In 2007, Balcerzak et al. \cite{balcerzak2007statistical} extended the notion of statistical convergence to the concept of equi-statistical convergence for sequences of functions. Before proceeding with the definition, 
let $C(K)$ denote the space of all continuous real-valued functions defined on a compact subset
$K(\subseteq\mathbb{R})$. It is well-known that $C(K)$ is a Banach space when equipped with the norm
$$
\|f\|_{C(K)}=\sup_{x\in K}|f(x)|,~f\in C(K).
$$

\begin{definition}\cite[Page no. 719]{balcerzak2007statistical}
A sequence of functions $\{f_t\}_{t \in \mathbb{N}}$ is said to be equi-statistically convergent to $f \in C(K)$ if, for any $\varepsilon > 0$, the sequence $\{g_{j,\varepsilon}\}_{j \in \mathbb{N}}$ of functions $g_{j,\varepsilon} : K \to \mathbb{R}$, defined by
\begin{equation*}
	g_{j,\varepsilon}(x) = \frac{|\{t \in \mathbb{N} : |f_t(x) - f(x)| > \varepsilon\} \cap [1,j]|}{j}, \quad x \in K,
\end{equation*}
converges uniformly to the zero function on $K$.
\end{definition}
Karakuş et al. \cite{karakucs2008equi} established a Korovkin-type approximation theorem using the concept of equi-statistical convergence. This result pertains to sequences of positive linear operators acting on $C(K)$.
\begin{theorem}\cite[Theorem 2.4]{karakucs2008equi}\label{equistcon}
	Let $\{L_t\}_{t\in\mathbb{N}}$ be a sequence of positive linear operators from $C(K)$ into $C(K)$. Then, for all $f \in C(K)$,
	$$
	L_t(f) \to f ~~\quad(equi-st ) ~\text{ on } K
	$$
	if and only if
	\begin{equation*}
		L_t(e_i) \to e_i ~~\quad(equi-st )~ \text{ on } K,
	\end{equation*}
	where $e_i(x) = x^i$ for $i = 0, 1, 2$. 
\end{theorem}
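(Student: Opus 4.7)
The plan is to adapt the standard Korovkin argument in $C(K)$ to the equi-statistical setting, paying close attention to uniformity in $x \in K$. The implication $(\Rightarrow)$ is immediate, since $e_i \in C(K)$ for $i = 0, 1, 2$. For the converse, I fix $f \in C(K)$ and write $M = \|f\|_{C(K)}$. By compactness of $K$, $f$ is uniformly continuous, so for any prescribed $\eta > 0$ there exists $\delta > 0$ with $|f(y) - f(x)| < \eta$ whenever $|y - x| < \delta$; combining this with the crude bound $|f(y) - f(x)| \le 2M$ yields the familiar pointwise estimate $|f(y) - f(x)| \le \eta + (2M/\delta^{2})(y - x)^{2}$, valid for all $x, y \in K$.

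Applying the positive linear operator $L_{t}$ to the variable $y$ and using linearity together with the expansion $(y - x)^{2} = e_{2}(y) - 2x\,e_{1}(y) + x^{2} e_{0}(y)$, I obtain a Korovkin-type inequality of the form
$$|L_{t}(f)(x) - f(x)| \le \eta + C\Bigl( |L_{t}(e_{0})(x) - e_{0}(x)| + |L_{t}(e_{1})(x) - e_{1}(x)| + |L_{t}(e_{2})(x) - e_{2}(x)| \Bigr),$$
where $C$ depends only on $f, \eta, \delta$ and $\sup_{x \in K}|x|$; compactness of $K$ ensures that $C$ is independent of $x$. Now fix $\varepsilon > 0$, take $\eta = \varepsilon/2$, and set $\tau = \varepsilon/(6C)$. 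The inequality above forces the set inclusion
$$\bigl\{ t \in \mathbb{N} : |L_{t}(f)(x) - f(x)| > \varepsilon \bigr\} \subseteq \bigcup_{i=0}^{2} \bigl\{ t \in \mathbb{N} : |L_{t}(e_{i})(x) - e_{i}(x)| > \tau \bigr\},$$
uniformly in $x \in K$. Intersecting both sides with $[1, j]$ and dividing by $j$ produces
$$g^{f}_{j,\varepsilon}(x) \le \sum_{i=0}^{2} g^{e_{i}}_{j,\tau}(x), \qquad x \in K,$$
where $g^{h}_{j,\varepsilon}$ denotes the density function associated with the sequence $\{L_{t}(h)\}_{t \in \mathbb{N}}$. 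By hypothesis each $g^{e_{i}}_{j,\tau}$ converges uniformly to the zero function on $K$, so the finite sum on the right does as well, which delivers equi-statistical convergence of $L_{t}(f)$ to $f$.

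The main obstacle, and the feature that justifies restricting the Korovkin test to $e_{0}, e_{1}, e_{2}$, is securing a constant $C$ in the Korovkin estimate that is independent of $x \in K$: this is precisely what allows the "bad" index set for $f$ to be engulfed by the union of "bad" sets for the test functions \emph{at every point} $x$ simultaneously, which is exactly the uniformity demanded by the definition of equi-statistical convergence. Compactness of $K$ enters decisively here by supplying boundedness of $f$, a single modulus-of-continuity parameter $\delta$, and a uniform bound on $|x|$; were any of these allowed to vary with $x$, one would recover only pointwise statistical convergence.
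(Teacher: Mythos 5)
Your proposal is correct and follows exactly the standard Korovkin argument that the paper itself employs for its generalization (the proof of Theorem \ref{korokvingequiideal}): the quadratic majorization of $|f(y)-f(x)|$, a Korovkin inequality with a constant independent of $x\in K$, the resulting inclusion of the ``bad'' index set for $f$ into the union of those for $e_0,e_1,e_2$, and the uniform convergence of the associated density functions. The paper only cites Theorem \ref{equistcon} from Karaku{\c{s}} et al.\ rather than reproving it, but your argument is the unweighted, $\mathcal{I}_\delta$ specialization of the paper's own proof and is sound.
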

From another perspective, Akda{\u{g}} \cite{akdaug2017weighted} introduced the concept of weighted equi-statistical convergnce for sequences of functions, encompassing the earlier notion of equi-statistical convergence, as defined below: 

\begin{definition}\cite[Definition 2.2]{akdaug2017weighted}
	A sequence of functions $\{f_t\}_{t\in\mathbb{N}}$ is said to be weighted equi-statistically convergent (briefly, $\omega-equi-st$) to $f$  on $K(\subseteq \mathbb{R})$ w.r.t. the weighted sequence $\{\omega_t\}_{t\in\mathbb{N}}$ if, for any $\varepsilon>0$, the sequence $\{g_{j,\varepsilon}\}_{j\in\mathbb{N}}$ of functions $g_{j,\varepsilon}:K\to \mathbb{R}$ given by 
	\begin{equation*}
	g_{j,\varepsilon}(x)=\frac{|\{t\leq \theta_j:\omega_t|f_t(x)-f(x)|> \varepsilon\}|}{\theta_j},~x\in K
	\end{equation*}
	is uniformly convergent to zero function on $K$. It is evident that this notion coincides with the concept of equi-statistical convergence  when $\omega_t=1$ for each $t\in\mathbb{N}$.
\end{definition}
Building on this notion, Akdağ \cite{akdaug2017weighted} formulated a weighted Korovkin-type approximation theorem, presented as follows:
\begin{theorem}\cite[Theorem 2.2]{akdaug2017weighted}\label{thMm5.4}
	Let $\{L_t\}_{t\in\mathbb{N}}$ be a sequence of positive linear operators from $C(K)$ into $C(K)$. Then, for all $f \in C(K)$,
	\[
	L_t(f) \to f ~~\quad(\omega-equi-st ) ~\text{ on } K
	\]
	if and only if
	\begin{equation*}
		L_t(e_i) \to e_i ~~\quad(\omega-equi-st )~ \text{ on } K.
	\end{equation*}
	
\end{theorem}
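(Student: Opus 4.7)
The plan is to handle necessity trivially and then establish sufficiency via a weighted adaptation of the classical Korovkin technique. For necessity, assume $L_t(f) \to f$ ($\omega\text{-}equi\text{-}st$) for every $f \in C(K)$; since $K$ is compact the monomials $e_0, e_1, e_2$ belong to $C(K)$, and specializing the hypothesis to these three test functions gives the conclusion.

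For sufficiency, fix $f \in C(K)$ and set $M = \|f\|_{C(K)}$ and $N = \max_{x \in K}|x|$, both finite by compactness of $K$. Given $\varepsilon > 0$, uniform continuity supplies $\delta > 0$ with $|f(y) - f(x)| < \varepsilon$ whenever $|y-x| < \delta$, and then the standard majorization argument yields the pointwise estimate $|f(y) - f(x)| \leq \varepsilon + \tfrac{2M}{\delta^2}(y-x)^2$ valid for all $x, y \in K$. Applying the positive linear operator $L_t$ with $y$ as variable, using positivity and the expansion $L_t((\cdot - x)^2; x) = L_t(e_2; x) - 2x L_t(e_1;x) + x^2 L_t(e_0;x)$, together with the triangle inequality $|L_t(f;x) - f(x)| \leq |L_t(f;x) - f(x)L_t(e_0;x)| + |f(x)|\,|L_t(e_0;x) - 1|$, I expect to arrive at an estimate of the form
$$
|L_t(f;x) - f(x)| \leq \varepsilon + C_f \sum_{i=0}^{2} |L_t(e_i;x) - e_i(x)|,
$$
where $C_f$ depends on $\varepsilon, M, N, \delta$ but is independent of $x \in K$ and $t$.

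The key translation step is the set-theoretic inclusion: multiplying by $\omega_t$, one tries to show that for any target $\varepsilon^* > 0$,
$$
\{t \leq \theta_j : \omega_t |L_t(f;x) - f(x)| > \varepsilon^*\} \subseteq \bigcup_{i=0}^{2}\{t \leq \theta_j : \omega_t |L_t(e_i;x) - e_i(x)| > \eta\},
$$
for a suitable $\eta = \eta(\varepsilon^*, C_f)$. Dividing cardinalities by $\theta_j$, using subadditivity, and invoking the hypothesis for $i = 0, 1, 2$ then yields uniform convergence to zero on $K$ of the resulting $g_{j,\varepsilon^*}$ built from $L_t(f)$.

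The principal obstacle lies in the residual term $\omega_t \varepsilon$ appearing on the right when the Korovkin estimate is multiplied through by $\omega_t$. Because the weighted sequence $\{\omega_t\}_{t \in \mathbb{N}}$ is only required to satisfy $\omega_t > \beta$ from below, it may be unbounded above, so no prior choice of $\varepsilon$ can force $\omega_t \varepsilon \leq \varepsilon^*/2$ uniformly in $t$; this is the point at which the naive inclusion above breaks down and the argument needs a genuine fix — either by imposing $\sup_t \omega_t < \infty$, or by isolating the indices with large $\omega_t$ and showing they contribute a set of vanishing weighted density. I expect this gap to be precisely the source of the defect that the paper's announced \emph{correction} (phrased in the broader language of weighted equi-ideal convergence) is designed to repair.
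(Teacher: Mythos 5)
You have correctly identified that this ``theorem'' cannot be proved as stated, and your diagnosis of where the argument dies is precisely the paper's own. The statement is quoted from Akda{\u{g}} only to be refuted: the Note immediately following it points out that Akda{\u{g}}'s proof multiplies the Korovkin estimate $|L_t(f;x)-f(x)|<\varepsilon+B(x,\varepsilon)\sum_{i=0}^{2}|L_t(e_i;x)-e_i(x)|$ through by $\omega_t$ and then silently discards the term $\varepsilon\omega_t$, an omission that is illegitimate exactly when $\omega_t\to\infty$ --- which the definition of a weighted sequence permits, since it only bounds $\omega_t$ from below. So the residual term you flag as the principal obstacle is not a gap to be closed by a cleverer choice of exceptional sets; the sufficiency direction is genuinely false. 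In Section \ref{section 5} the paper exhibits a counterexample: with $\omega_t=\sqrt{t}$ and the operators $L_t(f;x)=(1+h_t(x))B_t(f;x)$ from Akda{\u{g}}'s own Example 2.2, Tachev's asymptotic expansion for Bernstein operators is used to show that $L_t(f)\not\to f$ $(\omega-equi-st)$ for every $f\in C^1[0,1]$ with $|f'(x)|>0$ on $[0,1]$, even though the test-function condition holds. Your proof attempt is therefore not completable for the statement as written, but your analysis of why is correct and matches the paper's.

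Your proposed repairs are also the right ones, and the second is the one the paper adopts. The corrected Korovkin theorem (Theorem \ref{korokvingequiideal}, stated for weighted equi-$\mathcal{I}_\varphi$ convergence, of which $\omega$-equi-st convergence is the special case $\varphi(A)=\sup_{t}|A\cap[1,\theta_t]|/\theta_t$ by Proposition \ref{procoincidence}) adds the hypothesis that $\{\omega_t\}_{t\in\mathbb{N}}$ is $\mathcal{I}_\varphi$-bounded. The set $A=\{t\in\mathbb{N}:\omega_t>\mu\}$ then lies in the ideal; on its complement one gets $\omega_t|L_t(f;x)-f(x)|\leq\varepsilon+B'\sum_{i=0}^{2}\omega_t|L_t(e_i;x)-e_i(x)|$ with the residual term now controlled by $\mu\varepsilon$, and the contribution of $A$ is absorbed through $\varphi(A\setminus[1,j])\to 0$. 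The necessity direction is trivial exactly as you say. Had you pursued your first suggested fix ($\sup_t\omega_t<\infty$), you would have obtained a strictly weaker result, since $\mathcal{I}_\varphi$-boundedness allows $\omega_t\to\infty$ along a set belonging to the ideal.
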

\begin{note}
	In the proof of the above theorem, it is shown (see after Eq (2.5) in \cite[Theorem 2.2]{akdaug2017weighted}) that
	$$
	|L_t(f;x)-f(x)|<\varepsilon+B(x,\varepsilon)\sum_{i=0}^{2}|L_t(e_i;x)-e_i(x)|,
	$$
	where $B(x,\varepsilon)=\varepsilon + C + \frac{2C} {\delta^2 }(\|x\|^2 + 2\|x\| + 1)$ and $C=\displaystyle\sup_{x\in K}|f(x)|$. By multiplying both sides by $\omega_t$, we obtain 
	$$
	\omega_t|L_t(f;x)-f(x)|<\varepsilon\omega_t+B(x,\varepsilon)\sum_{i=0}^{2}\omega_t|L_t(e_i;x)-e_i(x)|,
	$$
	Then, due to the arbitrariness of $\varepsilon$, it is assumed that 
	$$
	|L_t(f;x)-f(x)|\leq B(x,\varepsilon)\sum_{i=0}^{2}\omega_t|L_t(e_i;x)-e_i(x)|,
	$$
	i.e., the term $\varepsilon\omega_t$ is neglected. However, such an omission is not justified when $\omega_t\to\infty$ as $t\to\infty$.
\end{note}
After that, the following example is considered, and it too is found to be incorrect. 
\begin{example}\cite[Example 2.2]{akdaug2017weighted}\label{example5.5}
	For each $t\in \mathbb{N}$, define the function $h_t \in C[0,1]$ by
	$$
	h_t(x) = 
	\begin{cases}
		\displaystyle 2^{t+1}(x - \frac{1}{2^t}), & \text{if } x \in \left[\frac{1}{2^t},  \frac{1}{2^{t-1}}-\frac{1}{2^{t+1}}\right], \\
		-2^{t+1}\left(x - \frac{1}{2^{t-1}}\right), & \text{if } x \in \left[\frac{1}{2^{t-1}}-\frac{1}{2^{t+1}}, \frac{1}{2^{t-1}}\right],\\
		0, & \text{otherwise}.
	\end{cases}
	$$
	Consider the classical Bernstein operators 
	$$
	B_t(f;x) = \sum_{k=0}^{t} f\left( \frac{k}{t} \right) \binom{t}{k} x^k (1 - x)^{t - k}, \quad x\in [0,1]~\mbox{ and }~f\in C[0,1].
	$$
	
	Set $\omega_t = \sqrt{t}$ for each $t \in\mathbb{N}$. Then, the sequence   $\{L_t : C[0,1] \to C[0,1]\}_{t \in \mathbb{N}}$ of positive linear operators, defined by
	$$
	L_t(f;x)=(1+h_t(x))B_t(f;x),~\quad x\in [0,1]~\mbox{ and }~f\in C[0,1],
	$$
	satisfies the following property:
	$$
	L_t(f;x)\to f~~\quad(\omega-equi-st )~\mbox{ for all }~f\in C[0,1].
	$$
	Unfortunaltely, this assertion is incorrect. Later in Section \ref{section 5}, we will demonstrate that  there exists a class of functions $f\in C[0,1]$ for which this assertion does not hold, thereby also suggesting that  Theorem \ref{thMm5.4} is not generally valid.
\end{example}
At this point, we recall a significant result that will be instrumental for our purposes.

\begin{theorem}\cite[Theorem 2]{tachev2012complete}\label{tachev}
	If $q \in \mathbb{N}$ is odd and $f \in C^q[0, 1]$, then uniformly in $x \in [0, 1]$, as $t \to \infty$, we have
	$$
	t^{\frac{q}{2}} \cdot \left( B_t(f; x) - f(x) - \sum_{r=1}^{q} B_t\left((e_1 - x)^r; x\right) \cdot \frac{f^{(r)}(x)}{r!} \right) \to 0.
	$$
\end{theorem}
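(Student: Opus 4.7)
The plan is to expand $f$ about $x$ via Taylor's theorem up to order $q$, apply the Bernstein operator $B_t(\cdot;x)$ term by term, and then estimate the resulting image of the Taylor remainder uniformly in $x$ via the classical moment bounds for $B_t$. For each fixed $x\in [0,1]$ and $y\in[0,1]$, write
$$f(y) \;=\; \sum_{r=0}^{q}\frac{f^{(r)}(x)}{r!}(y-x)^r + R_q(y,x),$$
with integral remainder
$$R_q(y,x) \;=\; \frac{(y-x)^q}{(q-1)!}\int_0^1 (1-u)^{q-1}\bigl(f^{(q)}(x+u(y-x))-f^{(q)}(x)\bigr)\,du.$$
Since $f^{(q)}$ is uniformly continuous on $[0,1]$, its modulus of continuity $\omega(\cdot)$ satisfies $\omega(\delta)\to 0$ as $\delta\to 0^{+}$, yielding the pointwise bound $|R_q(y,x)|\leq |y-x|^q\,\omega(|y-x|)/q!$.

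Applying $B_t(\cdot;x)$ to the Taylor identity and using $B_t(e_0;x)=1$ shows that the bracketed quantity in the theorem equals precisely $B_t(R_q(\cdot,x);x)$, so the problem reduces to proving
$$t^{q/2}\sup_{x\in[0,1]}\bigl|B_t(R_q(\cdot,x);x)\bigr|\longrightarrow 0.$$
Fix $\varepsilon>0$. I would choose $\delta>0$ so that $\omega(\delta)$ is small and then split $B_t(|R_q|;x)$ according to whether $|k/t-x|\leq\delta$ or $|k/t-x|>\delta$. On the near region, the continuity estimate gives $B_t\bigl(|R_q|\mathbf{1}_{|e_1-x|\leq\delta};x\bigr)\leq \omega(\delta)\,B_t(|e_1-x|^q;x)/q!$, and by the Cauchy--Schwarz inequality (equivalently Jensen applied to the binomial distribution induced by $B_t$) together with the classical uniform estimate $B_t((e_1-x)^{2q};x)=O(t^{-q})$, one obtains $B_t(|e_1-x|^q;x)=O(t^{-q/2})$. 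On the far region, using $|R_q(y,x)|\leq 2\|f^{(q)}\|_\infty|y-x|^q/q!$ and inserting the amplification $(|y-x|/\delta)^{2K-q}\geq 1$ for any integer $K>q/2$ yields
$$\bigl|B_t\bigl(|R_q|\mathbf{1}_{|e_1-x|>\delta};x\bigr)\bigr|\;\leq\; \frac{2\|f^{(q)}\|_\infty}{q!\,\delta^{2K-q}}\,B_t\bigl((e_1-x)^{2K};x\bigr)=O(t^{-K})=o(t^{-q/2}).$$
Combining the two parts, $t^{q/2}|B_t(R_q(\cdot,x);x)|\leq C\,\omega(\delta)+o(1)$ uniformly in $x$; letting first $t\to\infty$ and then $\delta\to 0^{+}$ completes the argument.

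The main obstacle is securing the uniformity in $x\in[0,1]$ required by the theorem. This rests on two classical ingredients, used in concert: the uniform continuity of $f^{(q)}$ on the compact interval, which supplies a single $x$-independent modulus $\omega$; and the uniform moment estimates $B_t((e_1-x)^{2K};x)=O(t^{-K})$ for the Bernstein operators, which are standard consequences of the central-moment recurrence or of the binomial moment generating function. The hypothesis that $q$ is odd does not enter the remainder estimate itself; it merely reflects the structural form in which the Voronovskaya-type expansion is packaged here and is the shape in which the result is subsequently applied.
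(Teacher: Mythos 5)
The paper offers no proof of this statement: it is imported verbatim as \cite[Theorem 2]{tachev2012complete}, so there is no internal argument to compare yours against. Judged on its own, your proof is correct and complete. The reduction of the bracketed quantity to $B_t(R_q(\cdot,x);x)$ is exact because $B_t(e_0;x)=1$; the bound $|R_q(y,x)|\leq |y-x|^q\,\omega(|y-x|)/q!$ follows from the integral remainder together with $\int_0^1(1-u)^{q-1}\,du=1/q$ and the monotonicity of the modulus of continuity; and the two moment facts you invoke, namely $B_t(|e_1-x|^q;x)=O(t^{-q/2})$ via Lyapunov/Cauchy--Schwarz and the uniform even central-moment bound $B_t\bigl((e_1-x)^{2K};x\bigr)=O(t^{-K})$, are the classical Sikkema--Lorentz estimates. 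The splitting at $|y-x|=\delta$ with the amplification factor $(|y-x|/\delta)^{2K-q}$ on the far region, followed by $t\to\infty$ and then $\delta\to 0^+$, closes the argument uniformly in $x$. Your route is the direct Voronovskaya-type Taylor-remainder argument and is more elementary than Tachev's, who derives the statement from the complete asymptotic expansion of $B_t(f;x)$ in powers of $t^{-1}$ with explicitly computed coefficients. Your closing remark is also accurate: the parity of $q$ is not used anywhere in the estimate, and the same proof gives the displayed convergence for every $q\in\mathbb{N}$; oddness only matters if one wishes to discard the top term of the sum, since $B_t\bigl((e_1-x)^q;x\bigr)=O\bigl(t^{-(q+1)/2}\bigr)=o(t^{-q/2})$ precisely for odd $q$.
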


We are now in a position to introduce our main definitions, namely Definitions~\ref{Def 2} and \ref{defIequi}, which serve to generalize the concepts of rough weighted statistical convergence for sequences and weighted equi-statistical convergence for sequences of functions, respectively
\begin{definition}\label{Def 2}
Let $r\geq 0$, $\{\omega_t\}_{t\in\mathbb{N}}$ be a weighted sequence and $\mathcal{I}$ be an ideal on $\mathbb{N}$. A sequence $\{x_t\}_{t\in\mathbb{N}}$ with values in a normed space $X$ is said to be rough weighted  $\mathcal{I}$-convergent to $x_*$ w.r.t. the degree of roughness $r$ and the weighted sequence $\{\omega_t\}_{t\in\mathbb{N}}$ (briefly, $r-(\omega_t,\mathcal{I})$-convergence), denoted by $x_t\xrightarrow[r]{(\omega_t,\mathcal{I})} x_*,$ provided that
$$\{t\in\mathbb{N}:\omega_t\|x_t-x_*\|> r+\varepsilon\}\in\mathcal{I},~\mbox{for any}~\varepsilon>0.$$ 
We denote by $(\omega_t,\mathcal{I})-\text{LIM}^rx_t=\left\{x_*\in X:x_t\xrightarrow[r]{(\omega_t,\mathcal{I})} x_* \right\}$ the $r$-weighted $\mathcal{I}$-limit set of $\{x_t\}_{t\in\mathbb{N}}$ in $X$.
\end{definition}

 \begin{remark} Note that
 	\begin{itemize}
\item [(i)] if $\omega_t=1$ for all $t\in\mathbb{N}$  then $r$-$(\omega_t,\mathcal{I})$-convergence corrosponds to rough $\mathcal{I}$-convergence (see \cite{dundar2014rough,Dundar2016}). Furthermore, if $\mathcal{I}=\mathcal{I}_{\delta}$ (resp. $\mathcal{I}=\mathcal{I}_{fin}$)   then $r-(\omega_t,\mathcal{I})$-convergence conincides with   $r$-statistical convergence  (see \cite{aytar2008rough}) (resp. $r$-convergence).
\item [(ii)] if $\mathcal{I}=\{A\subset\mathbb{N}:\mathcal{W}\delta(A)=0\}$ then $r$-$(\omega_t,\mathcal{I})$-convergence simplifies to $r$-weighted statistical convergence (see \cite{das2018characterization,ghosal2021rough}).
\end{itemize}
 \end{remark}

\begin{definition}
A weighted sequence $\{\omega_t\}_{t\in\mathbb{N}}$ is said to be $\mathcal{I}$-bounded if there exists $\mu > 0$ such that  $\{t \in \mathbb{N} : \omega_t > \mu\}\in\mathcal{I}$. Alternatively, this can be expressed as  $\{t \in \mathbb{N} : \omega_t < \mu\}\in\mathcal{F}(\mathcal{I})$.
\end{definition}
\begin{definition}\label{defIequi}
	Let $\mathcal{I}_{\varphi}$ be an analytic $P$-ideal, $\{\omega_t\}_{t\in\mathbb{N}}$ be a weighted sequence and $\{f_t\}_{t\in\mathbb{N}}$ be a sequence of functions in $C(K)$. Then $\{f_t\}_{t\in\mathbb{N}}$ is said to be weighted equi-$\mathcal{I}_{\varphi}$ (briefly, $\omega-equi-\mathcal{I}_{\varphi}$) convergent to $f$ on $K$ w.r.t. the weighted sequence $\{\omega_t\}_{t\in\mathbb{N}}$, if for every $\varepsilon>0$, the sequence $\{h_{j,\varepsilon}\}_{j\in\mathbb{N}}$ of functions $h_{j,\varepsilon}:K\to \mathbb{R}$ given by 
	\begin{equation}\label{equiideal}
		h_{j,\varepsilon}(x)=\varphi(\{t\in\mathbb{N}:\omega_t|f_t(x)-f(x)|> \varepsilon\}\setminus[1,j]),~x\in K
	\end{equation}
	is uniformly convergent to zero function on $K$.

 It can be shown that  $\omega-equi-\mathcal{I}_{\varphi}$ convergence simplifies to $\omega-equi-st$ convergence if one defines $$\varphi(A)=\sup_{t\in\mathbb{N}}\frac{|A\cap [1,\theta_t]|}{\theta_t}~\mbox{ for all }~A\subseteq \mathbb{N}.$$ Furthermore, if $\omega_t=1$ for each $t\in\mathbb{N}$, then setting 
 $$\varphi(A)=\sup_{k\in\mathbb{N}}\frac{|A\cap [1,t]|}{t}~\mbox{ for all }~A\subseteq \mathbb{N},$$
leads to the definition of equi-statistical convergence 
(see Section~\ref{section 5} for details).
\end{definition}

In this article, we primarily focus on the ``rough weighted ideal limit set" associated with a sequence in normed spaces, i.e., $(\omega_t, \mathcal{I})-\text{LIM}^r x_t$. Our goal is to deepen the understanding of the nature of this rough weighted ideal limit set with ``minimal convergent degree" within normed spaces. Along the way, the concept of ``rough weighted ideal cluster points set" of a sequence in normed spaces is introduced. We use this concept to demonstrate several notable results that include a characterization of maximal ideals and a representation of closed sets within normed spaces. Finally, we establish a ``Korovkin-type approximation theorem concerning weighted equi-ideal convergence" for sequences of functions relative to analytic $P$-ideals, thereby providing both as a correction of Theorem \ref{thMm5.4} and   a generalization of Theorem \ref{equistcon}.\\

The remaining portion of the paper consists of four sections. In Section \ref{sec 2}, we examine the newly proposed concept of ``rough weighted ideal limit set" and obtain our key findings as follows:
\begin{itemize}
\item
The rough weighted ideal limit set $(\omega_t,\mathcal{I})-\text{LIM}^r x_t$ of a sequence $\{x_t\}_{t\in\mathbb{N}}$ in a normed space $X$ is an $F_{\sigma\delta}$ (and hence a Borel) subset of $X$, provided that $\mathcal{I}$ is an analytic $P$-ideal [Proposition \ref{th2.3}].
\item If $\{\omega_t\}_{t\in\mathbb{N}}$ is not $\mathcal{I}$-bounded then  $(\omega_t,\mathcal{I})-\text{LIM}^r x_t$  contains at most one element [Lemma \ref{le 1}].
\item  The set $(\omega_t,\mathcal{I})-\text{LIM}^r x_t$ is closed, bounded, and convex in any dimensional normed space $X$ [Lemmas \ref{le 2} and \ref{le3}]; however it may not be compact in $X$ [Note \ref{note 2.2}].
\item For any sequence $\{x_t\}_{t\in\mathbb{N}}$ in a uniformly convex normed space \(X\), the set $(\omega_t, \mathcal{I})-\text{LIM}^r x_t$ is strictly convex, given that $\mathcal{I}$ is a $P$-ideal [Theorem \ref{strict convex}]; the condition imposed on the normed space $X$ cannot, in general, be relaxed [Example \ref {ex 2.12}].
\end{itemize}
In Section \ref{sec 3}, we introduce the idea of the ``{minimal weighted ideal convergent degree" $\tilde{r}(x)$ of a sequence $x=\{x_t\}_{t\in\mathbb{N}}$ w.r.t. a given weighted sequence and a given ideal. Based on this concept, we have established the following:
\begin{itemize}
	\item If $\{\omega_t\}_{t\in\mathbb{N}}$ is $\mathcal{I}$-bounded then $int((\omega_t,\mathcal{I})-\text{LIM}^{r} x_t)\neq \varnothing$ when $r>\tilde{r}(x)$ [Proposition \ref{Th 2.4}]. 
	\item 
	If  $x=\{x_t\}_{t\in\mathbb{N}}$ is contained in a totally bounded subset of $X$ then $int((\omega_t,\mathcal{I})-\text{LIM}^{\tilde{r}(x)} x_t)=\varnothing$ [Proposition \ref{Pro 3}].
	\item For any sequence $x=\{x_t\}_{t\in\mathbb{N}}$ in a reflexive normed space $X$, we have $(\omega_t,\mathcal{I})-\text{LIM}^{\tilde{r}(x)} x_t\neq\varnothing$ [Theorem \ref{Th 2.6}]; however the reflexivity of $X$ cannot, in general, be neglected [Example \ref{non-reflexive}].
	\item 
	For any sequence $x=\{x_t\}_{t\in\mathbb{N}}$  in a uniformly convex Banach space $X$.
Then:
	\begin{itemize}
		\item[(i)] If $(\omega_t,\mathcal{I})-\text{LIM}^{r} x_t$ is singletone and $\{\omega_t\}_{t\in\mathbb{N}}$ is $\mathcal{I}$-bounded  then $r=\tilde{r}(x)$.
		\item [(ii)] If $r=\tilde{r}(x)$ and $\mathcal{I}$ is a $P$-ideal then $(\omega_t,\mathcal{I})-\text{LIM}^{r} x_t$ is singleton.
	\end{itemize}
[Theorem \ref{Th 2.14}].
\end{itemize} 
Section \ref{sec 4} focuses on the study on ``rough weighted ideal cluster points set" of a sequence $\{x_t\}_{t\in\mathbb{N}}$, denoted by $(\omega_t,\mathcal{I})-\Gamma_{x_t}^r$, in normed spaces. This section includes the following significant findings: 
\begin{itemize}
\item The set $(\omega_t,\mathcal{I})-\Gamma_{x_t}^r$ may or may not be closed in $X$ [Example \ref{ex 2}].
\item The set $(\omega_t,\mathcal{I})-\Gamma_{x_t}^r$ is closed in $X$ if  $\{\omega_t\}_{t\in\mathbb{N}}$ is $\mathcal{I}$-bounded for any ideal $\mathcal{I}$ on $\mathbb{N}$ [Proposition \ref{closedness cluster}] or if $\mathcal{I}$ is a maximal admissible ideal [Proposition \ref{maximal closed cluster}].
\item For a maximal admissible ideal $\mathcal{I}$, we have $(\omega_t,\mathcal{I})-\text{LIM}^{r} x_t=(\omega_t,\mathcal{I})-\Gamma_{x_t}^r$ for any sequence $\{x_t\}_{t\in\mathbb{N}}$ in $X$ and vice versa [Theorem \ref{maximal ideal}].
\item Each non-empty closed set $F$ in a separable normed space $X$ has a represented in terms of  $(\omega_t,\mathcal{I})-\Gamma_{x_t}^r$ for some sequence $\{x_t\}_{t\in\mathbb{N}}$ in $X$, if the ideal $\mathcal{I}$  possesses a certain property [Theorem \ref{closed set}].
\end{itemize} 
Section \ref{section 5} is devoted to the study of Korovkin-type approximation theorem via weighted equi-ideal convergence for sequences of functions with respect to analytic $P$-ideals. The section covers the following essential results:
\begin{itemize}
	\item  For a particular choice of the submeasure $\varphi$, weighted equi-$\mathcal{I}_{\varphi}$ convergence coincides with weighted equi-statistical convergence [Proposition \ref{procoincidence}].
	\item  By invoking Theorem \ref{tachev}, we conclude that both Theorem \ref{thMm5.4} and Example \ref{example5.5} are incorrect
	 [Example \ref{exwrong}].
	 \item A Korovkin-type approximation theorem via weighted equi-$\mathcal{I}_{\varphi}$ for sequences of functions is examined, which revises Theorem \ref{thMm5.4} and generalizes Theorem \ref{equistcon} [Theorem \ref{korokvingequiideal}].
\end{itemize}
\vspace{0.2cm}

\begin{itemize}
\item[\textbf{Notations:}] Throughout the paper,  $X$ represents a normed space, $B_X$ denotes the closed unit ball centre at $0\in X$, $\{x_t\}_{t \in \mathbb{N}}$ is a sequence of elements in $X$, and $\{\omega_t\}_{t \in \mathbb{N}}$ signifies a weighted sequence. 
\end{itemize}

\section{Some characterization of the set $(\omega_t,\mathcal{I})-\text{LIM}^r x_t$. }\label{sec 2}

Our first result in this section portrays that the limit set $(\omega_t,\mathcal{I})-\text{LIM}^r x_t$ is a Borel subset of $X$, meaning that it is topologically nice for a wider class of ideals. 
\begin{proposition} \label{th2.3}
For an analytic $P$-ideal $\mathcal{I}$, the limit set $(\omega_t,\mathcal{I})-\text{LIM}^r x_t$ is an $F_{\sigma\delta}$ (hence a Borel) subset of $X.$ 
\end{proposition}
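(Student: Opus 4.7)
The plan is to exploit the representation $\mathcal{I}=\mathcal{I}_{\varphi}$ of an analytic $P$-ideal given by some lscsm $\varphi$ on $\mathbb{N}$, and then to unfold the membership condition $x_{*}\in (\omega_t,\mathcal{I})-\text{LIM}^r x_t$ into a countable Boolean combination of closed conditions. Concretely, $x_{*}$ lies in the limit set if and only if for every $n\in\mathbb{N}$ the set
\begin{equation*}
A_{x_{*},n}:=\{t\in\mathbb{N}:\omega_t\|x_t-x_{*}\|>r+1/n\}
\end{equation*}
belongs to $\mathcal{I}_{\varphi}$, i.e.\ $\lim_{j\to\infty}\varphi(A_{x_{*},n}\setminus[1,j])=0$. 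Unpacking this limit, the condition becomes: for all $n,k\in\mathbb{N}$ there exists $j\in\mathbb{N}$ such that $\varphi(A_{x_{*},n}\setminus[1,j])\leq 1/k$. Hence
\begin{equation*}
(\omega_t,\mathcal{I})-\text{LIM}^r x_t=\bigcap_{n\in\mathbb{N}}\bigcap_{k\in\mathbb{N}}\bigcup_{j\in\mathbb{N}}S_{n,k,j},\qquad S_{n,k,j}:=\{x_{*}\in X:\varphi(A_{x_{*},n}\setminus[1,j])\leq 1/k\}.
\end{equation*}
The proof reduces to showing that each $S_{n,k,j}$ is closed in $X$.

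For closedness of $S_{n,k,j}$, I would take a sequence $x_{*}^{(m)}\to x_{*}$ in $X$ with $x_{*}^{(m)}\in S_{n,k,j}$ for every $m$, and then approximate $A_{x_{*},n}\setminus[1,j]$ by finite truncations $A_{x_{*},n}\cap[j+1,N]$. The crucial observation is that the defining inequality $\omega_t\|x_t-x_{*}\|>r+1/n$ is strict: for each of the finitely many $t\in A_{x_{*},n}\cap[j+1,N]$, continuity of $x_{*}\mapsto\omega_t\|x_t-x_{*}\|$ implies $\omega_t\|x_t-x_{*}^{(m)}\|>r+1/n$ for all sufficiently large $m$, so that $A_{x_{*},n}\cap[j+1,N]\subseteq A_{x_{*}^{(m)},n}\setminus[1,j]$ for all large $m$. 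By monotonicity of $\varphi$ this yields
\begin{equation*}
\varphi\bigl(A_{x_{*},n}\cap[j+1,N]\bigr)\leq \varphi\bigl(A_{x_{*}^{(m)},n}\setminus[1,j]\bigr)\leq 1/k.
\end{equation*}

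Letting $N\to\infty$ and invoking the lower semicontinuity of $\varphi$ (which gives $\varphi(A_{x_{*},n}\setminus[1,j])=\lim_{N\to\infty}\varphi(A_{x_{*},n}\cap[j+1,N])$) delivers $\varphi(A_{x_{*},n}\setminus[1,j])\leq 1/k$, whence $x_{*}\in S_{n,k,j}$. Once each $S_{n,k,j}$ is closed, $\bigcup_j S_{n,k,j}$ is $F_{\sigma}$, and the double intersection over $n,k$ gives the desired $F_{\sigma\delta}$ expression, which is in particular Borel.

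The main obstacle, as I see it, is the delicate interplay in the closedness step between two features of $\mathcal{I}_{\varphi}$: the lower semicontinuity of $\varphi$ (needed to pass from bounded windows $[j+1,N]$ to the full tail $\mathbb{N}\setminus[1,j]$) and the strictness of the inequality in $A_{x_{*},n}$ (needed so that $A_{x_{*},n}\cap[j+1,N]$ sits inside $A_{x_{*}^{(m)},n}$ for large $m$, and not merely the reverse inclusion, which would not interact correctly with monotonicity). Any weakening of either ingredient, e.g.\ replacing the strict inequality by $\geq$ or dropping lower semicontinuity, breaks the argument. Once these are kept track of, the remaining bookkeeping is purely formal.
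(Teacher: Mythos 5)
Your proof is correct, and it follows the same overall blueprint as the paper's: represent $\mathcal{I}=\mathcal{I}_{\varphi}$, unfold membership in the limit set into a countable intersection-union of sets cut out by conditions of the form $\varphi(\cdot)\leq 1/k$, and then verify that these building blocks are closed. There are two genuine differences worth noting. First, your indexing is leaner: by observing (implicitly) that $j\mapsto\varphi(A_{x_*,n}\setminus[1,j])$ is non-increasing, you replace the paper's four-layer expression $\bigcap_k\bigcap_j\bigcup_m\bigcap_{t\geq m}B_{k,j,t}$ with the three-layer $\bigcap_n\bigcap_k\bigcup_j S_{n,k,j}$; both land in $F_{\sigma\delta}$, but yours drops a redundant quantifier. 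Second, and more substantively, the closedness of the building blocks is established by different means: the paper rewrites $B_{k,j,t}$ set-theoretically as $\bigcap_{F\in\mathscr{F}_{t,j}}\bigcup_{i\in F}(X\setminus U_{i,k})$, using lower semicontinuity of $\varphi$ to reduce the witnessing family $\mathscr{F}_{t,j}$ to finite sets so that each inner union of closed sets is closed; you instead run a direct sequential argument, using the strictness of the inequality defining $A_{x_*,n}$ (an open condition, matching the paper's open sets $U_{t,k}$) to transfer finite windows $A_{x_*,n}\cap[j+1,N]$ into $A_{x_*^{(m)},n}$ for large $m$, and lower semicontinuity to pass from these windows to the full tail. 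The paper's version exhibits an explicit closed-set representation (which some readers may prefer for its topological transparency), while yours is shorter and makes the role of the two hypotheses, strictness and lower semicontinuity, more visible; your closing remark correctly identifies that both are indispensable.
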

\begin{proof}
	Since $\mathcal{I}$ is an analytic $P$-ideal, we have $\mathcal{I}=\mathcal{I}_{\varphi}$ for some  lscsm $\varphi$ on $\mathbb{N}$.
	For $k,t\in\mathbb{N}$, we consider the open set $$U_{t,k}=\left\{x_*\in X:\omega_t\|x_t-x_*\|>r+\frac{1}{k}\right\}.$$
	Note that the definition of rough $(\omega_t,\mathcal{I})$-convergence ensures that 
	\begin{align}\label{eq 1a}
		&(\omega_t,\mathcal{I})-\text{LIM}^r x_t\nonumber\\
		=& \left\{x_*\in X: (\forall k\in\mathbb{N})~ \{t\in\mathbb{N}:x_*\in U_{t,k} \}\in\mathcal{I}_{\varphi}\right\}\nonumber\\
		=&\bigcap_{k=1}^{\infty}\left\{x_*\in X:  \{t\in\mathbb{N}:x_*\in U_{t,k} \}\in\mathcal{I}_{\varphi}\right\}\nonumber\\
		=&\bigcap_{k=1}^{\infty}\left\{x_*\in X: \lim_{t\to\infty} \varphi \left(\{i\in\mathbb{N}:x_*\in U_{i,k} \}\setminus\{1,2,...,t\}\right)=0 \right\}\nonumber\\
		=&\bigcap_{k=1}^{\infty}\left\{x_*\in X: (\forall j\in\mathbb{N})~(\exists m\in\mathbb{N}): \varphi(\{i\in\mathbb{N}:x_*\in U_{i,k} \}\setminus\{1,2,...,t\})\leq\frac{1}{j}~(\forall t\geq m) \right\}\nonumber\\
		=&\bigcap_{k=1}^{\infty}\bigcap_{j=1}^{\infty}\bigcup_{m=1}^{\infty}\bigcap_{t=m}^{\infty} \left\{x_*\in X:  \varphi(\{i\in\mathbb{N}:x_*\in U_{i,k} \}\setminus\{1,2,...,t\})\leq\frac{1}{j}\right\}
		\end{align}
	For every fixed $t,j\in\mathbb{N},$ consider the family  $$\mathscr{F}_{t,j}=\{F\subseteq \mathbb{N}\setminus\{1,2,..,t\}: \varphi{(F)}>\frac{1}{j}\}.$$
	Observe that for each \( F \in \mathscr{F}_{t,j} \), we have \( \varphi(F) > \frac{1}{j} \). Since \( \varphi \) is lower semicontinuous, we can express \( \varphi(F) \) as \( \lim_{i \to \infty} \varphi(F \cap [1,i]) \). This means there exists some \( i_0 \in \mathbb{N} \) such that \( \varphi(F \cap [1,i_0]) > \frac{1}{j} \). By well ordring prperty we choose $i_F=\min\{i\in\mathbb{N}: \varphi(F \cap [1,i]) > \frac{1}{j}\}$. Note that $\varphi(F \cap [1,i_F]) > \frac{1}{j}$.
	Thus, it is sufficient to consider \( F \subseteq [1,i_F] \). Therefore, without loss of generality, we can assume that each \( F \in \mathscr{F}_{t,j} \) is finite.
	Then, notice that 
	\begin{align*}
		B_{k,j,t}=&\left\{x_*\in X:  \varphi(\{i\in\mathbb{N}:x_*\in U_{i,k} \}\setminus\{1,2,...,t\})\leq\frac{1}{j}\right\}\\
		=&\left\{x_*\in X: (\forall F\in \mathscr{F}_{t,j})~(\exists i\in F)~\mbox{such that}~x_*\notin U_{i,k}\right\}\\
		=& \bigcap_{F\in \mathscr{F}_{t,j}} \bigcup_{i\in F} X\setminus U_{i,k}.
	\end{align*}
	Since $U_{i,k}$ is an open set for every $i,k\in\mathbb{N}$ and $F$ is finite, it follows that  $B_{k,j,t}$  is a closed subset of $X.$ Finally, in view of Eq (\ref{eq 1a}), we deduce that $(\omega_t,\mathcal{I})-\text{LIM}^r x_t$ is an $F_{\sigma\delta}$ subset of $X.$
	\end{proof}

\begin{lemma}\label{le 1}
Suppose $r\geq 0$ and $\{\omega_t\}_{t\in\mathbb{N}}$ is not $\mathcal{I}$-bounded. Then for every $\{x_t\}_{t\in\mathbb{N}}$ in $X$, $(\omega_t,\mathcal{I})-\text{LIM}^r x_t$ is either empty or singletone.
\end{lemma}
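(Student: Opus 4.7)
The plan is to prove the contrapositive: if $(\omega_t,\mathcal{I})-\text{LIM}^r x_t$ contains two distinct elements, then $\{\omega_t\}_{t\in\mathbb{N}}$ must be $\mathcal{I}$-bounded. So I will assume that there exist distinct $x_*, y_* \in (\omega_t,\mathcal{I})-\text{LIM}^r x_t$ and produce a $\mu>0$ witnessing $\mathcal{I}$-boundedness.

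For any fixed $\varepsilon>0$, the definition of $r$-$(\omega_t,\mathcal{I})$-convergence gives
\begin{equation*}
A_\varepsilon := \{t\in\mathbb{N}:\omega_t\|x_t-x_*\|>r+\varepsilon\}\in\mathcal{I}, \qquad B_\varepsilon := \{t\in\mathbb{N}:\omega_t\|x_t-y_*\|>r+\varepsilon\}\in\mathcal{I}.
\end{equation*}
Hence the intersection of their complements $C_\varepsilon := (\mathbb{N}\setminus A_\varepsilon)\cap(\mathbb{N}\setminus B_\varepsilon)$ lies in $\mathcal{F}(\mathcal{I})$. For every $t\in C_\varepsilon$, the triangle inequality yields
\begin{equation*}
\omega_t\|x_*-y_*\| \leq \omega_t\|x_t-x_*\| + \omega_t\|x_t-y_*\| \leq 2(r+\varepsilon).
\end{equation*}

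Since $x_*\neq y_*$, I can divide by $\|x_*-y_*\|>0$ and set $\mu = \dfrac{2(r+\varepsilon)}{\|x_*-y_*\|}$. The above shows $\omega_t\leq\mu$ for every $t\in C_\varepsilon$, so $\{t\in\mathbb{N}:\omega_t>\mu\}\subseteq \mathbb{N}\setminus C_\varepsilon = A_\varepsilon\cup B_\varepsilon\in\mathcal{I}$. By the hereditary property of $\mathcal{I}$, $\{t\in\mathbb{N}:\omega_t>\mu\}\in\mathcal{I}$, which is precisely the statement that $\{\omega_t\}_{t\in\mathbb{N}}$ is $\mathcal{I}$-bounded, contradicting the hypothesis.

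There is no real obstacle here; the argument is a direct triangle-inequality estimate combined with the ideal property. The only subtlety I want to be mindful of is that the witness $\mu$ depends on $\varepsilon$ and on the chosen pair $x_*,y_*$, but the conclusion of $\mathcal{I}$-boundedness only requires the existence of some such $\mu$, so this dependence is harmless; fixing any single $\varepsilon$ (say $\varepsilon=1$) already gives the required contradiction.
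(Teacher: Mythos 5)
Your proof is correct and is essentially the paper's argument read in contrapositive form: both hinge on the triangle inequality at indices where both convergence conditions hold, yielding the inclusion $\{t\in\mathbb{N}:\omega_t>2(r+\varepsilon)/\|x_*-y_*\|\}\subseteq A_\varepsilon\cup B_\varepsilon\in\mathcal{I}$, which the paper reads as a contradiction with non-$\mathcal{I}$-boundedness and you read as a proof of $\mathcal{I}$-boundedness. Your closing remark that a single fixed $\varepsilon$ suffices is exactly right.
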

\begin{proof}
	Assume, on the contrary, that there exist $x_*,y_*\in X$ with $x_*\neq y_*$ such that $x_*,y_*\in (\omega_t,\mathcal{I})-\text{LIM}^r x_t$. We set $\varepsilon=\frac{\|x_*-y_*\|}{2}>0$. Let us consider $A=\{t\in\mathbb{N}:\omega_t>\frac{2(r+\varepsilon)}{\|x_*-y_*\|}\}$. Note that  $A\notin\mathcal{I}$ since  $\{\omega_t\}_{t\in\mathbb{N}}$ is not $\mathcal{I}$-bounded. Now, observe that
	\begin{eqnarray*}
		A&=&\{t\in\mathbb{N}:\omega_t\|x_*-y_*\|>2(r+\varepsilon)\}\\
		&\subseteq& \{t\in\mathbb{N}:\omega_t\|x_t-x_*\|>r+\varepsilon\}\cup \{t\in\mathbb{N}:\omega_t\|x_t-y_*\|>r+\varepsilon\}\in\mathcal{I}.
	\end{eqnarray*}
This contradicts the fact that $A\notin\mathcal{I}$. Therefore, we can conclude that  $(\omega_t,\mathcal{I})-\text{LIM}^r x_t$ is atmost singletone.
\end{proof}
\begin{lemma}\label{le 2}
For a sequence $\{x_t\}_{t\in\mathbb{N}}$ in $X$,
$$
\mbox{diam}((\omega_t,\mathcal{I})-\text{LIM}^r x_t)\leq \frac{2r}{\displaystyle\liminf_{t\in\mathbb{N}}\omega_t}.
$$
\end{lemma}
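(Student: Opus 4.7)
The plan is to fix any two limit points $x_*, y_* \in (\omega_t,\mathcal{I})-\text{LIM}^r x_t$ and deduce the pointwise bound $\|x_* - y_*\| \leq \frac{2r}{\liminf_t \omega_t}$; taking the supremum over such pairs then yields the desired estimate on the diameter. The cases $(\omega_t,\mathcal{I})-\text{LIM}^r x_t = \varnothing$ or $x_* = y_*$ are immediate, so I focus on $\|x_* - y_*\| > 0$.

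Fix an arbitrary $\varepsilon > 0$. By definition of $r$-$(\omega_t,\mathcal{I})$-convergence, the sets $A_\varepsilon := \{t \in \mathbb{N} : \omega_t\|x_t - x_*\| > r+\varepsilon\}$ and $B_\varepsilon := \{t \in \mathbb{N} : \omega_t\|x_t - y_*\| > r+\varepsilon\}$ both lie in $\mathcal{I}$, hence so does $A_\varepsilon \cup B_\varepsilon$. For any $t \notin A_\varepsilon \cup B_\varepsilon$ the triangle inequality gives $\omega_t\|x_* - y_*\| \leq \omega_t\|x_t - x_*\| + \omega_t\|x_t - y_*\| \leq 2(r+\varepsilon)$, which rearranges to the inclusion
$$
\left\{t \in \mathbb{N} : \omega_t > \tfrac{2(r+\varepsilon)}{\|x_* - y_*\|}\right\} \subseteq A_\varepsilon \cup B_\varepsilon \in \mathcal{I}.
$$

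The next step converts this ideal-membership statement into a bound on $\liminf_t \omega_t$. Since $\mathcal{I}$ is admissible and non-trivial, the complement $\{t \in \mathbb{N} : \omega_t \leq \frac{2(r+\varepsilon)}{\|x_* - y_*\|}\}$ of a member of $\mathcal{I}$ cannot be finite (otherwise it would lie in $\mathcal{I}_{fin} \subseteq \mathcal{I}$ and force $\mathbb{N} \in \mathcal{I}$). Consequently, infinitely many indices $t$ satisfy $\omega_t \leq \frac{2(r+\varepsilon)}{\|x_* - y_*\|}$, so $\liminf_t \omega_t \leq \frac{2(r+\varepsilon)}{\|x_* - y_*\|}$, which rearranges to $\|x_* - y_*\| \leq \frac{2(r+\varepsilon)}{\liminf_t \omega_t}$. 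Letting $\varepsilon \to 0^{+}$ and then taking the supremum over all admissible pairs $x_*, y_*$ produces the stated diameter inequality.

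The only genuinely delicate point is the passage from $\{t : \omega_t > M\} \in \mathcal{I}$ to $\liminf_t \omega_t \leq M$, where admissibility of $\mathcal{I}$ is indispensable. As a consistency check, when $\liminf_t \omega_t = \infty$ the right-hand side degenerates to $0$: in that regime $\{\omega_t\}_{t \in \mathbb{N}}$ is not $\mathcal{I}$-bounded, and Lemma~\ref{le 1} already forces $(\omega_t,\mathcal{I})-\text{LIM}^r x_t$ to be at most a singleton, so the claim holds vacuously.
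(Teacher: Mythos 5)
Your proof is correct, and it rests on the same core estimate as the paper's: the triangle inequality $\omega_t\|x_*-y_*\|\leq \omega_t\|x_t-x_*\|+\omega_t\|x_t-y_*\|\leq 2(r+\varepsilon)$ at any index outside the two exceptional sets. Where you genuinely differ is in how this gets converted into a bound involving $\liminf_{t}\omega_t$. The paper argues by contradiction: it picks $\alpha<\liminf_t\omega_t$ with $\|x_*-y_*\|>2r/\alpha$, invokes Lemma~\ref{le 1} to conclude that $\{\omega_t\}_{t\in\mathbb{N}}$ is $\mathcal{I}$-bounded, and then intersects three dual-filter sets (the $\mathcal{I}$-boundedness set with a finite initial segment removed, and the two ``good'' sets for $x_*$ and $y_*$) to produce a single index at which the chain of inequalities collapses. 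You instead observe directly that $\{t\in\mathbb{N}:\omega_t>2(r+\varepsilon)/\|x_*-y_*\|\}\in\mathcal{I}$, so its complement lies in $\mathcal{F}(\mathcal{I})$ and is therefore infinite, which forces $\liminf_t\omega_t\leq 2(r+\varepsilon)/\|x_*-y_*\|$; letting $\varepsilon\to 0^{+}$ and taking the supremum over pairs finishes the argument. Your route is more economical: it dispenses with Lemma~\ref{le 1} and the contradiction scaffolding entirely, and it makes explicit exactly where admissibility and non-triviality of $\mathcal{I}$ are needed --- hypotheses the paper also uses, but only implicitly (for instance when it discards the finite set $\{1,\dots,t_0\}$ from a member of $\mathcal{F}(\mathcal{I})$). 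Both arguments are valid under the paper's standing assumptions.
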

\begin{proof}
	Assume, on the contrary, that $\mbox{diam}((\omega_t,\mathcal{I})-\text{LIM}^r x_t)> \frac{2r}{\displaystyle\liminf_{t\in\mathbb{N}}\omega_t}$. Then, there exists $0<\alpha<\displaystyle\liminf_{t\in\mathbb{N}}\omega_t$ such that $\mbox{diam}((\omega_t,\mathcal{I})-\text{LIM}^r x_t)> \frac{2r}{\alpha}$. Now, we can find $x_*,y_*\in (\omega_t,\mathcal{I})-\text{LIM}^r x_t$ such that $\|x_*-y_*\|>\frac{2r}{\alpha}$. Therefore, by Lemma \ref{le 1}, $\{\omega_t\}_{t\in\mathbb{N}}$ is $\mathcal{I}$-bounded. So there exists $\mu>0$ such that $\{t\in\mathbb{N}:\omega_t<\mu\}\in\mathcal{F}(\mathcal{I})$. Since $\displaystyle\liminf_{t\in\mathbb{N}}\omega_t>\alpha$, there exists $t_0\in\mathbb{N}$ such  that $\omega_t>\alpha$ for all $t>t_0$. We now fix $0<\varepsilon< \frac{\alpha\|x_*-y_*\|}{2}-r$.   Therefore, observe that
	\begin{eqnarray*}
	A&=&\{t\in\mathbb{N}:\omega_t<\mu\}\setminus\{1,2,...,t_0\}\in \mathcal{F}(\mathcal{I}),\\
	B&=&\{t\in\mathbb{N}:\omega_t\|x_t-x_*\|<r+\varepsilon\}\in \mathcal{F}(\mathcal{I}),\\
		\mbox{  and,  }~
	C&=&\{t\in\mathbb{N}:\omega_t\|x_t-y_*\|<r+\varepsilon\}\in \mathcal{F}(\mathcal{I}).
	\end{eqnarray*}
This ensures that $A\cap B\cap C\in \mathcal{F}(\mathcal{I})$. Pick any $t\in A\cap B\cap C$. Then, we get that 
\begin{eqnarray*}
\alpha\|x_*-y_*\|&\leq &\alpha\|x_t-x_*\|+\alpha\|x_t-y_*\|\\
&<&\omega_t\|x_t-x_*\|+\omega_t\|x_t-y_*\|<2(r+\varepsilon)<\alpha\|x_*-y_*\|,
\end{eqnarray*}
$-$which is a contradiction. Therefore, we can infer that
$\mbox{diam}((\omega_t,\mathcal{I})-\text{LIM}^r x_t)\leq \frac{2r}{\displaystyle\liminf_{t\in\mathbb{N}}\omega_t}.$
\end{proof}
\begin{lemma}\label{le3}
For a sequence $\{x_t\}_{t\in\mathbb{N}}$ in $X$, $(\omega_t,\mathcal{I})-\text{LIM}^r x_t$ is a closed and convex subset of $X$.
\end{lemma}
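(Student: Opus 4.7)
The plan is to prove convexity and closedness separately; both arguments rely on the triangle inequality together with the fact that $\mathcal{I}$ is closed under finite unions.

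For convexity, I would take arbitrary $x_*, y_* \in (\omega_t,\mathcal{I})-\text{LIM}^r x_t$ and $\lambda \in [0,1]$, and show that $z_* := \lambda x_* + (1-\lambda) y_*$ lies in the limit set. The triangle inequality gives
\[
\omega_t\|x_t - z_*\| \leq \lambda\,\omega_t\|x_t - x_*\| + (1-\lambda)\,\omega_t\|x_t - y_*\|,
\]
so for any $\varepsilon > 0$, whenever both $\omega_t\|x_t - x_*\| \leq r+\varepsilon$ and $\omega_t\|x_t - y_*\| \leq r+\varepsilon$ hold, so does $\omega_t\|x_t - z_*\| \leq r+\varepsilon$. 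Hence
\[
\{t\in\mathbb{N}:\omega_t\|x_t-z_*\|>r+\varepsilon\} \subseteq \{t:\omega_t\|x_t-x_*\|>r+\varepsilon\} \cup \{t:\omega_t\|x_t-y_*\|>r+\varepsilon\},
\]
and the right-hand side lies in $\mathcal{I}$, which yields $z_* \in (\omega_t,\mathcal{I})-\text{LIM}^r x_t$.

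For closedness, my strategy is to split on whether $\{\omega_t\}_{t\in\mathbb{N}}$ is $\mathcal{I}$-bounded. If it is not, Lemma \ref{le 1} forces the limit set to be empty or a singleton, which is automatically closed. Otherwise, fix $\mu>0$ with $A_\mu := \{t\in\mathbb{N}:\omega_t>\mu\}\in\mathcal{I}$, and take any $\{y_n\}_{n\in\mathbb{N}} \subseteq (\omega_t,\mathcal{I})-\text{LIM}^r x_t$ with $y_n \to y_* \in X$. Given $\varepsilon>0$, I would choose $n$ large enough that $\|y_n - y_*\| < \varepsilon/(2\mu)$. Then for every $t\notin A_\mu$ one has $\omega_t\|y_n - y_*\| < \varepsilon/2$, whence
\[
\omega_t\|x_t-y_*\| \leq \omega_t\|x_t-y_n\| + \omega_t\|y_n - y_*\| < \omega_t\|x_t-y_n\| + \varepsilon/2.
\]
Consequently,
\[
\{t:\omega_t\|x_t-y_*\|>r+\varepsilon\} \subseteq A_\mu \cup \{t:\omega_t\|x_t-y_n\|>r+\varepsilon/2\},
\]
which again lies in $\mathcal{I}$, so $y_*$ belongs to the limit set and closedness follows.

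The main obstacle is the absence of a uniform control on the weights $\omega_t$: without an a priori bound, the quantity $\omega_t\|y_n - y_*\|$ cannot be forced to be small merely by taking $y_n$ close to $y_*$. The device I plan to exploit is exactly the $\mathcal{I}$-boundedness dichotomy furnished by Lemma \ref{le 1}, which sweeps the large-$\omega_t$ indices into $\mathcal{I}$ and thereby reduces the closedness proof to a standard $\varepsilon/2$ estimate on the $\mathcal{F}(\mathcal{I})$-large set $\mathbb{N}\setminus A_\mu$.
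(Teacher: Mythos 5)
Your proof is correct. The paper omits this proof entirely, deferring to Theorems 3.3 and 3.4 of Das et al.\ (Math.\ Slovaca 2018), and your argument is exactly the standard one that citation points to: convexity via the triangle inequality and closure of $\mathcal{I}$ under finite unions, and closedness via the dichotomy on $\mathcal{I}$-boundedness of $\{\omega_t\}_{t\in\mathbb{N}}$ (invoking Lemma \ref{le 1} in the unbounded case, and the $\varepsilon/(2\mu)$ estimate on $\mathbb{N}\setminus A_\mu$ otherwise) --- the same device the paper itself uses for the cluster-point set in Proposition \ref{closedness cluster}.
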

\begin{proof}
The proof is similar to \cite[Theorems 3.3 and 3.4]{das2018characterization}. So is omitted.
\end{proof}
\begin{note}\label{note 2.2}
Lemmas \ref{le 2} and \ref{le3} show that $(\omega_t,\mathcal{I})-\text{LIM}^r x_t$ is a closed and bounded subset of $X$. However, in general, it need not be a compact subset of $X$. For instance, consider the normed space $\ell^\infty(\mathbb{R})$ of real bounded sequences equipped with $sup$ norm. 
Now consider the sequence $\{e_t\}_{t\in\mathbb{N}},$ where $e_j$ denotes the sequence $\{0,...,0,1,0,...\}$ with the $j$-th place is $1$ and the remaining positions are $0.$
Next, we set
\begin{equation*}
	\omega_t=\beta~\mbox{ where }~t\in\mathbb{N}~\mbox{ and }~\beta>0.
\end{equation*}
  Then it is evident that
$$
\{e_1,e_2,...,e_k,...\}\subseteq (\omega_t,\mathcal{I})-\text{LIM}^{\beta} e_t~\mbox{ for each non-trivial ideal }~\mathcal{I}.
$$
A closed subset of a compact set is also compact. Consequently, $(\omega_t,\mathcal{I})-\text{LIM}^{\beta} e_t
$ is not compact in $\ell^\infty(\mathbb{R}).$
\end{note}

\begin{proposition}\label{th 2.4}
	Suppose $r,\sigma \ge 0$ and $\{\omega_t\}_{t\in\mathbb{N}}$ is $\mathcal{I}$-bounded. Then for every $\{x_t\}_{t\in\mathbb{N}}$ in $X,$ there exists  $\mu>0$ such that
	\begin{eqnarray*}(\omega_t,\mathcal{I})-\text{LIM}^r x_t + \sigma {B}_X\subseteq (\omega_t,\mathcal{I})-\text{LIM}^{r+\sigma \mu}x_t.
	\end{eqnarray*}
\end{proposition}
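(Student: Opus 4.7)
The plan is to use the $\mathcal{I}$-boundedness of $\{\omega_t\}_{t\in\mathbb{N}}$ to extract the constant $\mu$, and then run a straightforward triangle-inequality argument to absorb the perturbation $\sigma B_X$ into the roughness degree.

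First, invoke $\mathcal{I}$-boundedness to fix once and for all some $\mu > 0$ with $A := \{t\in\mathbb{N}:\omega_t > \mu\}\in \mathcal{I}$ (equivalently $\{t:\omega_t\le \mu\}\in \mathcal{F}(\mathcal{I})$). This is the $\mu$ that will witness the inclusion. Next, take an arbitrary element $y_*\in (\omega_t,\mathcal{I})-\text{LIM}^r x_t + \sigma B_X$, so that $y_* = x_* + \sigma z$ for some $x_*\in (\omega_t,\mathcal{I})-\text{LIM}^r x_t$ and some $z\in B_X$ with $\|z\|\le 1$. The goal is to verify that $y_* \in (\omega_t,\mathcal{I})-\text{LIM}^{r+\sigma\mu} x_t$, i.e., for every $\varepsilon>0$,
\begin{equation*}
S_\varepsilon := \{t\in\mathbb{N} : \omega_t\|x_t - y_*\| > r + \sigma\mu + \varepsilon\} \in \mathcal{I}.
\end{equation*}

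The key step is the triangle inequality: $\|x_t - y_*\| \le \|x_t - x_*\| + \sigma\|z\| \le \|x_t - x_*\| + \sigma$, hence
\begin{equation*}
\omega_t\|x_t - y_*\| \le \omega_t\|x_t - x_*\| + \sigma\,\omega_t.
\end{equation*}
If $t\in S_\varepsilon$, then either $\omega_t > \mu$ (so $t\in A$), or $\omega_t\le \mu$, in which case $\sigma\omega_t\le \sigma\mu$ forces $\omega_t\|x_t - x_*\| > r+\varepsilon$. This yields the set inclusion
\begin{equation*}
S_\varepsilon \;\subseteq\; A \;\cup\; \{t\in\mathbb{N} : \omega_t\|x_t - x_*\| > r + \varepsilon\}.
\end{equation*}
Both sets on the right belong to $\mathcal{I}$: the first by the choice of $\mu$, the second because $x_*\in (\omega_t,\mathcal{I})-\text{LIM}^r x_t$. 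Since $\mathcal{I}$ is closed under finite unions and subsets, $S_\varepsilon\in \mathcal{I}$, and since $\varepsilon$ and $y_*$ were arbitrary, the desired inclusion follows.

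There is no real obstacle here; the only subtle point to watch is that $\mu$ must be chosen before $y_*$ (it depends only on $\{\omega_t\}$ and $\mathcal{I}$), so that the same $\mu$ works uniformly for every element of $(\omega_t,\mathcal{I})-\text{LIM}^r x_t + \sigma B_X$. The $\mathcal{I}$-boundedness hypothesis is essential: without it, the term $\sigma\omega_t$ in the triangle inequality is uncontrollable on a set not in $\mathcal{I}$, and no constant $\mu$ can make the inclusion hold (cf.\ Lemma~\ref{le 1}).
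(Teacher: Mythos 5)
Your proof is correct and follows essentially the same route as the paper: extract $\mu$ from $\mathcal{I}$-boundedness, decompose $y_*=x_*+\sigma z$, and use the triangle inequality to show the exceptional set is contained in the union of $A=\{t:\omega_t>\mu\}$ and the exceptional set for $x_*$, both of which lie in $\mathcal{I}$. Your explicit case split on whether $\omega_t>\mu$ is just the contrapositive of the paper's inclusion $\mathbb{N}\setminus(A\cup B)\subseteq\{t:\omega_t\|x_t-y_*\|<r+\sigma\mu+\varepsilon\}$, so there is nothing substantively different.
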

\begin{proof}
	Assume that $\{\omega_t\}_{t\in\mathbb{N}}$ is $\mathcal{I}$-bounded. Then there exists $\mu>0$ such that 
$$
A=\{t\in \mathbb{N} : \omega_t>\mu\}\in\mathcal{I}.
$$  
Choose any $y_*\in(\omega_t,\mathcal{I})-\text{LIM}^r x_t + \sigma {B}_X$. Then, $y_*=x_*+x$ for some  $x_*\in(\omega_t,\mathcal{I})-\text{LIM}^r x_t$ and some $x\in \sigma {B}_X$. Let $\varepsilon>0$ be given. First, observe that
 $$
 B=\{t\in \mathbb{N} : \omega_t\|x_t-x_*\|>r+\varepsilon\}\in\mathcal{I}.
 $$ 
 Now, observe that 
	 $$
	 \mathbb{N}\setminus(A\cup B)\subseteq \{t\in \mathbb{N} : \omega_t\|x_t-y_*\|<r+\sigma \mu+\varepsilon\}.
	 $$
	As $A\cup B\in\mathcal{I}$, we get that $\{t\in \mathbb{N} : \omega_t\|x_t-y_*\|> r+\sigma \mu+\varepsilon\}\in\mathcal{I}$.
	 Since $\varepsilon>0$ was arbitrary, we deduce that $y_*\in(\omega_t,\mathcal{I})-\text{LIM}^{r+\sigma \mu}x_t$. Hence the result.\end{proof}
 
The requirement that the weighted sequence $\{\omega_t\}_{t\in\mathbb{N}}$ is $\mathcal{I}$-bounded in Proposition \ref{th 2.4} cannot be generally weakened. Let us present an example to reinforce this point.

\begin{example}
 Consider the sequence  $\{x_t\}_{t\in\mathbb{N}}$ and the weighted sequence $\{\omega_t\}_{t\in\mathbb{N}},$ which are  defined, respectively, as follows:
	\begin{align*}
		x_{t}=
		\begin{cases}
			\frac{(-1)^t}{t} + \frac{1}{t^2} &\mbox{if}~ t\notin \{m^2:m\in\mathbb{N}\},\\
			t^2&\text{otherwise,} 
		\end{cases}
		~~\text{ and }~~~
		\omega_{t}=
		t&~\mbox{ for all }~t\in \mathbb{N}. 
\end{align*}
Here, we  set $\mathcal{I}=\mathcal{I}_{\delta}$. Then, 
it is clear that $\{\omega_t\}_{t\in\mathbb{N}}$ is not statistically bounded. 
Now, observe that
$$(\omega_t,\mathcal{I}_\delta)-\text{LIM}^{r}x_t=
\begin{cases}
	\varnothing&~\mbox{for }~r\in[0,1), \\ 
	\{0\}&~\mbox{for }~ r\in[1,\infty).
\end{cases}$$ 
We fix any $r\geq 1$. Then for every $\beta,\sigma\geq0,$ we obtain that
$$(\omega_t,\mathcal{I}_\delta)-\text{LIM}^{r}x_t + \sigma B_X \nsubseteq (\omega_t,\mathcal{I}_\delta)-\text{LIM}^{\beta} x_t.$$
\end{example}

Our findings suggest that set $(\omega_t, \mathcal{I})-\text{LIM}^r x_t$ can remain non-empty if the roughness degree `$r$' is decreased under specific conditions.

\begin{theorem}\label{Th 2.1}
	Suppose $r,\sigma> 0$ and the sequence $\{x_t\}_{t\in\mathbb{N}}$ is contained in a totally bounded subset of $X$.  If $x_*\in X$ is such that $x_*+\sigma B_ X\subseteq (\omega_t,\mathcal{I})-\text{LIM}^{r} x_t$ then there exists $\bar\omega>0$ such that $r\geq\sigma\bar\omega$ and  $x_*\in (\omega_t,\mathcal{I})-\text{LIM}^{r-\sigma\bar\omega} x_t$. 
\end{theorem}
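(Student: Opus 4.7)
The plan is to set $\bar\omega := \liminf_{t\in\mathbb{N}} \omega_t$, which is strictly positive since $\omega_t > \beta > 0$ for all $t$. First I would verify $r \geq \sigma\bar\omega$: the hypothesis $x_* + \sigma B_X \subseteq (\omega_t,\mathcal{I})-\text{LIM}^r x_t$ forces this limit set to contain diametrically opposite points at distance $2\sigma$, and Lemma \ref{le 2} then yields $\sigma\bar\omega \leq r$. Next, because the limit set contains more than one point, Lemma \ref{le 1} forces $\{\omega_t\}_{t\in\mathbb{N}}$ to be $\mathcal{I}$-bounded, so I may fix $M > 0$ with $E_M := \{t \in \mathbb{N} : \omega_t > M\} \in \mathcal{I}$.

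Fix an arbitrary $\varepsilon > 0$; it suffices to prove that $A := \{t \in \mathbb{N} : \omega_t\|x_t - x_*\| > r - \sigma\bar\omega + \varepsilon\}$ belongs to $\mathcal{I}$. Choose $\delta, \eta > 0$ small enough that $\sigma\delta + \sigma M\eta < \varepsilon/2$. By the definition of $\liminf$, the set $C := \{t \in \mathbb{N} : \omega_t \leq \bar\omega - \delta\}$ is finite and hence lies in $\mathcal{I}$ by admissibility. Put $A' := A \setminus (C \cup E_M)$; each $t \in A'$ satisfies $\bar\omega - \delta < \omega_t \leq M$ and $\|x_t - x_*\| > (r - \sigma\bar\omega + \varepsilon)/M > 0$, so $u_t := -(x_t - x_*)/\|x_t - x_*\| \in B_X$ is well-defined and a direct computation gives
\[
\omega_t\|x_t - x_* - \sigma u_t\| = \omega_t(\|x_t - x_*\| + \sigma) > (r - \sigma\bar\omega + \varepsilon) + \sigma(\bar\omega - \delta) = r + \varepsilon - \sigma\delta.
\]

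The key step is replacing the $t$-dependent direction $u_t$ by a member of a finite collection. Since $\{x_t\}_{t\in\mathbb{N}}$ lies in a totally bounded subset of $X$ and $\|x_t - x_*\|$ is uniformly bounded below by $\xi := (r - \sigma\bar\omega + \varepsilon)/M$ on $A'$, and since $y \mapsto -y/\|y\|$ is Lipschitz on $\{y : \|y\| \geq \xi\}$, the set $\{u_t : t \in A'\}$ is totally bounded in $B_X$. Pick a finite $\eta$-net $u_1, \dots, u_N \in B_X$ and, for each $t \in A'$, select $j(t)$ with $\|u_t - u_{j(t)}\| < \eta$; then
\[
\omega_t\|x_t - x_* - \sigma u_{j(t)}\| \geq \omega_t\|x_t - x_* - \sigma u_t\| - \sigma\omega_t\|u_t - u_{j(t)}\| > r + \varepsilon - \sigma\delta - \sigma M\eta > r + \varepsilon/2.
\]
Consequently $t \in B_j := \{s \in \mathbb{N} : \omega_s\|x_s - x_* - \sigma u_j\| > r + \varepsilon/2\}$ for $j = j(t)$. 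Since $x_* + \sigma u_j \in (\omega_t,\mathcal{I})-\text{LIM}^r x_t$ by hypothesis, each $B_j$ belongs to $\mathcal{I}$; hence $A' \subseteq \bigcup_{j=1}^{N} B_j \in \mathcal{I}$ and $A \subseteq A' \cup C \cup E_M \in \mathcal{I}$. As $\varepsilon > 0$ was arbitrary, $x_* \in (\omega_t,\mathcal{I})-\text{LIM}^{r - \sigma\bar\omega} x_t$. The main obstacle is precisely the $t$-dependence of the ``worst-case'' direction $u_t \in B_X$: the family of candidate directions is a priori uncountable, so the defining ideal property cannot be applied directly; total boundedness of $\{x_t\}$ is the essential ingredient that reduces the problem to a finite union.
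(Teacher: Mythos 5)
Your proof is correct and follows essentially the same strategy as the paper's: both establish $r\geq\sigma\bar\omega$ from the diameter bound of Lemma~\ref{le 2}, then exploit total boundedness of $\{x_t\}$ to replace the continuum of candidate directions in $\sigma B_X$ by a finite net, apply the rough-limit hypothesis at each of the finitely many points $x_*+\sigma u_j$, and conclude via a finite union of sets in $\mathcal{I}$. The only substantive divergence is your choice $\bar\omega=\liminf_{t}\omega_t$ rather than the paper's $\bar\omega=\inf_{t}\omega_t$: this yields a nominally stronger conclusion but forces you to discard the finite exceptional set $C=\{t:\omega_t\leq\bar\omega-\delta\}$, which requires $\mathcal{I}$ to be admissible --- a hypothesis not stated in the theorem --- whereas taking $\bar\omega=\inf_{t}\omega_t$ makes $C$ empty and removes that dependence.
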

\begin{proof} Since $(\omega_t, \mathcal{I})-\text{LIM}^r x_t$ contains the non-trivial ball $x_*+\sigma B_ X$, Lemma \ref{le 1} entails that $\{\omega_t\}_{t\in\mathbb{N}}$ is $\mathcal{I}$-bounded. Then there exists $\mu>0$ such that 
	$$
	T=\{t\in\mathbb{N}: \omega_t<\mu\}\in\mathcal{F}(\mathcal{I}).
	$$ 
Let us set $\bar\omega=\sup\{\lambda\in\mathbb{R}:\omega_t\geq \lambda ~ \mbox{for all }~ t\in\mathbb{N}\}>0.$ Then $\omega_t\geq \bar{\omega}$ for each $t\in\mathbb{N}$. Since $x_*+\sigma B_ X\subseteq (\omega_t, \mathcal{I})-\text{LIM}^r x_t$, utilizing Lemma \ref{le 2}, we obtain  $2r\geq 2\sigma~\underset{t\in \mathbb{N}}{\liminf \omega_{t}}$. This ensures that $r\geq \sigma \bar\omega.$

Now, let $Y$ be a totally bounded subset of $X$ such that $\{x_t-x_*: t\in\mathbb{N}\}\subseteq Y$. Let $\varepsilon>0$. Then there exists $m\in\mathbb{N}$ with $d_1,d_2,...,d_m\in X\setminus \{0\}$ such that $Y\displaystyle\subseteq \bigcup_{j=1}^{m} (d_j+ \frac{\varepsilon}{3\mu}{B}_X)$.\\
	Consider the set $$
	D=\left \{-\frac{\sigma d_j}{\|d_j\|}:j=1,2,...,m \right\} \subseteq \displaystyle \sigma B_X.
	$$
	Note that, for each $z\in D,$ we have $x_*+z\in (\omega_t, \mathcal{I})-\text{LIM}^r x_t$. Therefore, for each $z\in D$, we get
	$$
	T_{z,\varepsilon}=\{t\in\mathbb{N}: \omega_t\|x_t-x_*-z\|<r+\frac{\varepsilon}{3}\}\in \mathcal{F}(\mathcal{I}).
	$$
	We set $T_\varepsilon=\displaystyle\bigcap_{z\in D} T_{z,\varepsilon}$. Since $D$ is finite, we also have  $T_\varepsilon\in \mathcal{F}(\mathcal{I})$.
	
	Pick $t \in T\cap  T_{\varepsilon}$ be arbitrary.  Since $x_t-x_*\in Y$, there exists $j\in \{1,2,...,m\}$ such that 
	 $$\|x_t-x_*-d_j\|\leq \frac{\varepsilon}{3\mu}.$$ 
	 
	Therefore, observe that
	\begin{align*}
		\omega_t(\sigma+\|d_j\|)=\omega_t(\|d_j \frac{\|d_j\|+\sigma}{\|d_j\|} \|)&=\omega_t(\|d_j+\frac{\sigma d_j}{\|d_j\|}\|)\\
		&\leq \sup_{z\in D} \mbox{~} \omega_t\|d_j-z\|\\
		&\leq  \sup_{z\in D} \mbox{~} (\omega_t\|x_t-x_*-z\|+\omega_t\|x_t-x_*-d_j\|)\leq r+ \frac{2\varepsilon}{3}.\\
		\Rightarrow \omega_t\|d_j\|&\leq r-\sigma\bar\omega +  \frac{2\varepsilon}{3}~\mbox{ (since }~ \omega_t\geq\bar\omega~\mbox{ for all }~ t\in\mathbb{N}).
	\end{align*}
	 With this, we have
	\begin{align*}
		\omega_t\|x_t-x_*\|&\leq \omega_t\|x_t-x_*-d_j\| + \omega_t\|d_j\|\\
		&\leq r-\sigma\bar\omega +\varepsilon
	\end{align*}
	Thus we obtain that $ T\cap  T_{\varepsilon}\subseteq\{t\in\mathbb{N}: \omega_t\|x_t-x_*\|\leq r-\sigma\bar\omega +\varepsilon \}$. This ensures that $\{t\in\mathbb{N}: \omega_t\|x_t-x_*\|> r-\sigma\bar\omega +\varepsilon\}\in\mathcal{I}$ since $T\cap  T_{\varepsilon}\in \mathcal{F}(\mathcal{I})$.
As $\varepsilon>0$ was chosen arbitrarily, we can conclude that $x_*\in (\omega_t,\mathcal{I})-\text{LIM}^{r-\sigma\bar\omega} x_t$. 
\end{proof}

  In Lemma \ref{le3}, we demonstrated that set $(\omega_t,\mathcal{I})-\text{LIM}^{r} x_t$ is a convex subset of $X$. An important question arises: is the set $(\omega_t,\mathcal{I})-\text{LIM}^{r} x_t$ strictly convex? Before we proceed, let us define some key results that are essential for proving the strict convexity of set $(\omega_t,\mathcal{I})-\text{LIM}^{r} x_t.$


 According to \cite[Theorem 3.2]{kostyrko2000convergence}\label{usual subsequence}, for any P-ideal $\mathcal{I}$ on $\mathbb{N},$ a sequence $\{x_t\}_{t\in\mathbb{N}}$ is $\mathcal{I}$-convergent to $x$ if and only if there exists $A\subseteq\mathbb{N}$ with $\mathbb{N}\setminus A\in \mathcal{I}$ such that $\displaystyle\lim_{t\in A}x_t=x.$ We will apply a similar result for rough $(\omega_t,\mathcal{I})$-convergence, without providing a proof,  in light of \cite[Theorem 3.2(i)]{kostyrko2000convergence}

\begin{lemma}\label{Th 2.2}
	Let $r\geq 0$, and $\mathcal{I}$ be a $P$-ideal on $\mathbb{N}$. 
	Then for a sequence $\{x_t\}_{t\in\mathbb{N}}$ in $X$, $x_*\in (\omega_t,\mathcal{I})-\text{LIM}^{r} x_t$ if and only if there exists $A\subseteq \mathbb{N}$ with $\mathbb{N}\setminus A\in\mathcal{I}$ such that
	 $$\displaystyle\limsup_{t\in A}\omega_t\|x_t-x_*\|\leq r.$$
  \end{lemma}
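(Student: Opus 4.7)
The plan is to adapt the classical Kostyrko--Šalát--Wilczyński characterization \cite[Theorem 3.2]{kostyrko2000convergence}, which the authors flag immediately before the statement, to the rough weighted setting by replacing the standard inequality $\|x_t-x_*\|<\varepsilon$ by the thickened version $\omega_t\|x_t-x_*\|\leq r+\varepsilon$. I would prove the two implications separately: the ``if'' direction is routine, while the ``only if'' direction is where the $P$-ideal hypothesis does the real work through a diagonalization.

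For the sufficiency direction, assume there is $A\subseteq\mathbb{N}$ with $\mathbb{N}\setminus A\in\mathcal{I}$ and $\limsup_{t\in A}\omega_t\|x_t-x_*\|\leq r$. Given $\varepsilon>0$, the $\limsup$ condition produces $t_0\in\mathbb{N}$ such that $\omega_t\|x_t-x_*\|\leq r+\varepsilon$ for every $t\in A$ with $t\geq t_0$. Consequently
\[
\{t\in\mathbb{N}:\omega_t\|x_t-x_*\|>r+\varepsilon\}\subseteq(\mathbb{N}\setminus A)\cup\{1,\ldots,t_0-1\},
\]
which lies in $\mathcal{I}$ (the finite piece is absorbed by admissibility), yielding $x_*\in(\omega_t,\mathcal{I})-\text{LIM}^{r}x_t$.

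For the necessity direction, I would invoke the defining property of $P$-ideals. For each $k\in\mathbb{N}$ define $A_k=\{t\in\mathbb{N}:\omega_t\|x_t-x_*\|>r+1/k\}\in\mathcal{I}$. By the $P$-ideal property there exists a single $A\in\mathcal{I}$ such that $A_k\setminus A$ is finite for every $k$. Put $B=\mathbb{N}\setminus A\in\mathcal{F}(\mathcal{I})$; for each fixed $k$ choose $m_k\in\mathbb{N}$ with $A_k\cap B\subseteq[1,m_k]$, so that $\omega_t\|x_t-x_*\|\leq r+1/k$ whenever $t\in B$ and $t>m_k$. This forces $\limsup_{t\in B}\omega_t\|x_t-x_*\|\leq r+1/k$ for every $k$; letting $k\to\infty$ delivers the required bound by $r$.

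The principal obstacle is precisely this diagonal step: compressing the countable family $\{A_k\}_{k\in\mathbb{N}}$ of thresholded ``bad'' sets into a single $A\in\mathcal{I}$. This is exactly where the $P$-ideal assumption is indispensable — without it the countable intersection structure implicit in the $\varepsilon$-quantifier of Definition~\ref{Def 2} cannot be captured by one almost-everything set $B\in\mathcal{F}(\mathcal{I})$, and the equivalence genuinely breaks down.
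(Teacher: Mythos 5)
Your proof is correct and follows precisely the route the paper intends: the authors give no proof of this lemma, instead citing \cite[Theorem 3.2(i)]{kostyrko2000convergence}, and your argument is exactly the adaptation of that classical Kostyrko--\v{S}al\'{a}t--Wilczy\'{n}ski proof to the rough weighted setting, with the $P$-ideal diagonalization over the sets $A_k=\{t\in\mathbb{N}:\omega_t\|x_t-x_*\|>r+1/k\}$ doing the work in the necessity direction. The only point worth flagging is that both directions tacitly use admissibility of $\mathcal{I}$ (to absorb the finite tail $[1,t_0-1]$ into $\mathcal{I}$, and to ensure $\mathbb{N}\setminus A$ is infinite so the $\limsup$ over it is meaningful), an assumption the paper also makes implicitly throughout.
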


\begin{definition} \cite{megginson2012introduction, xuan2003rough} Let $(X,\|\cdot\|)$ be a normed space.
	\begin{itemize} 
\item [(i)]  A subset $C\subseteq X$ is  strictly convex  if its interior contains the open line segment connecting any two points inside it.
\item[(ii)] $X$ is uniformly convex if for each $\varepsilon\in (0,2]$ there exists $\delta>0$ such that for all $x,y\in X$ satisfying $\|x\|=1,\|y\|=1$ and $\|x-y\|\geq\varepsilon,$ we have $\|\frac{x+y}{2}\|<\delta.$ 
\end{itemize} 
\end{definition}


\begin{lemma}\cite[Lemma 2.3 (a) \& (d)]{xuan2003rough}\label{Th 1.2*} $X$ is uniformly convex if and only if for each $r>0,$ and all $\varepsilon\in(0,2r]$ there exists $\delta(\varepsilon)>0$ such that, for arbitrary  sequences  $\{z_{0t}\}_{t\in\mathbb{N}}$  and  $\{z_{1t}\}_{t\in\mathbb{N}}$ in $X,$ we have
\begin{equation*}
	\limsup_{t\to\infty}\|z_{0t}\|\leq r,~ \limsup_{t\to\infty}\|z_{1t}\|\leq r,~\mbox{ and }~\|z_{0t}-z_{1t}\|\geq\varepsilon~\mbox{ for all }~t\in\mathbb{N}.
\end{equation*}
implies $\displaystyle\limsup_{t\to\infty}{\frac{1}{2}\|z_{0t}+z_{1t}\|}\leq r-\delta(\varepsilon).$
\end{lemma}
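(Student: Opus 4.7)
The plan is to prove both directions of the equivalence, with the forward implication being the substantive one.

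For the backward direction, I would specialize the hypothesis to $r = 1$ together with the constant sequences $z_{0t} \equiv x$, $z_{1t} \equiv y$, where $x, y$ are arbitrary unit vectors satisfying $\|x - y\| \geq \varepsilon$ for a given $\varepsilon \in (0, 2]$. Since $\limsup_{t \to \infty}\|z_{it}\| = 1$ and $\frac{1}{2}\|z_{0t} + z_{1t}\| = \|\frac{x+y}{2}\|$, the hypothesis yields $\|\frac{x+y}{2}\| \leq 1 - \delta(\varepsilon)$, which is precisely the uniform convexity of $X$ (with $1-\delta$ in place of the $\delta$ appearing in the definition stated in the paper).

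For the forward direction, assuming $X$ is uniformly convex, I would first upgrade the definition from the unit sphere to the closed unit ball: there exists $\delta'(\varepsilon) > 0$ such that $\|u\|, \|v\| \leq 1$ and $\|u - v\| \geq \varepsilon$ imply $\|\frac{u+v}{2}\| \leq 1 - \delta'(\varepsilon)$. A short case analysis suffices: if $\max(\|u\|, \|v\|) \leq 1 - \varepsilon/4$, the triangle inequality handles it directly; otherwise both norms exceed $1 - \varepsilon/4$, in which case one normalizes to $u/\|u\|$ and $v/\|v\|$, estimates the resulting separation from below by an explicit function of $\varepsilon$, applies the sphere-version of uniform convexity, and finally absorbs the small correction between $u$ and $u/\|u\|$. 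Scaling then yields the ball-modulus on $rB_X$: if $\|u\|, \|v\| \leq r$ and $\|u - v\| \geq \varepsilon$, then $\|\frac{u+v}{2}\| \leq r - r\,\delta'(\varepsilon/r)$.

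Now, given sequences $\{z_{0t}\}$ and $\{z_{1t}\}$ satisfying the hypotheses, fix any small $\eta > 0$. By the definition of $\limsup$, there is $t_\eta \in \mathbb{N}$ with $\|z_{0t}\|, \|z_{1t}\| \leq r + \eta$ for all $t \geq t_\eta$. Combined with $\|z_{0t} - z_{1t}\| \geq \varepsilon$, the ball-version gives
\[
\left\|\frac{z_{0t} + z_{1t}}{2}\right\| \leq (r+\eta) - (r+\eta)\,\delta'\!\left(\frac{\varepsilon}{r+\eta}\right) \quad \text{for all } t \geq t_\eta.
\]
Taking $\limsup_{t \to \infty}$ and then letting $\eta \to 0^+$, and using that $\delta'$ is non-decreasing and positive on $(0, 2]$, I obtain $\limsup_{t \to \infty}\|\frac{z_{0t} + z_{1t}}{2}\| \leq r - r\,\delta'(\varepsilon/r)$, so the conclusion holds with $\delta(\varepsilon) := r\,\delta'(\varepsilon/r) > 0$ (admissibly depending on the fixed $r$).

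The main obstacle I anticipate is the upgrade from the sphere-version to the ball-version of uniform convexity, together with controlling the slack $\eta$ in the $\limsup$-limit: one must ensure that allowing $\|z_{it}\| \leq r + \eta$ does not collapse the effective modulus to zero as $\eta \to 0^+$. The scaling identity $\tilde{\delta}(\varepsilon, r) = r\,\delta'(\varepsilon/r)$ makes this transparent, since $\delta'(\varepsilon/(r+\eta)) \to \delta'(\varepsilon/r) > 0$ by the monotonicity of $\delta'$, so the passage to the limit produces a strictly positive $\delta(\varepsilon)$ as required.
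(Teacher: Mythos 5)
The paper does not prove this lemma at all: it is imported verbatim from Phu \cite[Lemma 2.3 (a) \& (d)]{xuan2003rough} and used as a black box, so there is no in-paper argument to compare against. Your blind proof is the natural self-contained one and its structure is sound: the backward direction via constant unit-vector sequences with $r=1$ is correct (and you rightly read the paper's definition of uniform convexity as the standard $\|\frac{x+y}{2}\|\leq 1-\delta$, since the displayed ``$<\delta$'' is a typo), and the forward direction via the ball-version of the modulus of convexity, scaling to $rB_X$, and a $\limsup$/slack-$\eta$ argument is the right route. Two small imprecisions should be tightened. First, your case split in the sphere-to-ball upgrade is stated with $\max$: the negation of $\max(\|u\|,\|v\|)\leq 1-\varepsilon/4$ does not give that \emph{both} norms exceed $1-\varepsilon/4$; the dichotomy should be on $\min(\|u\|,\|v\|)$ (if $\min\leq 1-\varepsilon/4$ the triangle inequality gives $\|\frac{u+v}{2}\|\leq 1-\varepsilon/8$; otherwise both norms exceed $1-\varepsilon/4$ and one normalizes). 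Second, monotonicity of $\delta'$ alone does not give $\delta'(\varepsilon/(r+\eta))\to\delta'(\varepsilon/r)$ as $\eta\to 0^+$; since $\varepsilon/(r+\eta)<\varepsilon/r$, the limit is the left limit $\delta'((\varepsilon/r)^-)$, which can be strictly smaller if $\delta'$ jumps. This does not damage the lemma, because only \emph{some} positive $\delta(\varepsilon)$ is needed: for all $\eta\in(0,r]$ one has $\delta'(\varepsilon/(r+\eta))\geq\delta'(\varepsilon/(2r))>0$, so fixing one such $\eta$ small enough (or taking $\delta(\varepsilon)=r\,\delta'((\varepsilon/r)^-)$) yields the required uniform positive gap. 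With these two repairs the proof is complete.
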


\begin{theorem}\label{strict convex}
Let $r\geq 0$, $\mathcal{I}$ be a $P$-ideal, and $\{x_t\}_{t\in\mathbb{N}}$ be a sequence with values in a  uniformly convex normed space $X$. Then the set $(\omega_t,\mathcal{I})-\text{LIM}^{r} x_t$   is strictly convex in $X$. 
\end{theorem}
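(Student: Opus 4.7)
The plan is to take two distinct points $y_0,y_1\in(\omega_t,\mathcal{I})-\text{LIM}^{r}x_t$ and show that every $y_\lambda=\lambda y_0+(1-\lambda)y_1$ with $\lambda\in(0,1)$ lies in the interior of the limit set. The case where the set has at most one point is vacuous; in the remaining case, Lemma \ref{le 1} forces $\{\omega_t\}_{t\in\mathbb{N}}$ to be $\mathcal{I}$-bounded and, via Lemma \ref{le 2}, $r>0$. The three tools I will combine are: the $P$-ideal extraction (Lemma \ref{Th 2.2}), uniform convexity (Lemma \ref{Th 1.2*}), and the inflation estimate (Proposition \ref{th 2.4}).

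I first handle the midpoint $y_m=\tfrac{y_0+y_1}{2}$. Applying Lemma \ref{Th 2.2} to each $y_i$, I pick $A_i\subseteq\mathbb{N}$ with $\mathbb{N}\setminus A_i\in\mathcal{I}$ and $\limsup_{t\in A_i}\omega_t\|x_t-y_i\|\leq r$. Setting $A=A_0\cap A_1$, again $\mathbb{N}\setminus A\in\mathcal{I}$ and both bounds persist on $A$. Define $z_{it}=\omega_t(x_t-y_i)$ and reindex $A=\{t_1<t_2<\cdots\}$ so that Lemma \ref{Th 1.2*} may be applied to the subsequences $\{z_{i,t_k}\}_{k\in\mathbb{N}}$. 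Since $\omega_{t_k}\geq\beta$ for every $k$, one has $\|z_{0,t_k}-z_{1,t_k}\|=\omega_{t_k}\|y_0-y_1\|\geq\beta\|y_0-y_1\|=:\varepsilon_0$, and Lemma \ref{le 2} guarantees $\varepsilon_0\in(0,2r]$. Lemma \ref{Th 1.2*} then produces $\delta:=\delta(\varepsilon_0)>0$ with
\[
\limsup_{t\in A}\omega_t\|x_t-y_m\|=\limsup_{k\to\infty}\tfrac{1}{2}\|z_{0,t_k}+z_{1,t_k}\|\leq r-\delta,
\]
so Lemma \ref{Th 2.2} in the reverse direction gives $y_m\in(\omega_t,\mathcal{I})-\text{LIM}^{r-\delta}x_t$.

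Now I inflate. Proposition \ref{th 2.4}, applicable by $\mathcal{I}$-boundedness of $\{\omega_t\}$, furnishes $\mu>0$ such that for every $\sigma>0$,
\[
(\omega_t,\mathcal{I})-\text{LIM}^{r-\delta}x_t+\sigma B_X\subseteq(\omega_t,\mathcal{I})-\text{LIM}^{r-\delta+\sigma\mu}x_t.
\]
Choosing $\sigma\in(0,\delta/\mu)$ makes $r-\delta+\sigma\mu<r$, whence $y_m+\sigma B_X\subseteq(\omega_t,\mathcal{I})-\text{LIM}^{r}x_t$, and $y_m$ is an interior point.

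For an arbitrary $\lambda\in(0,1)$, I pick $\epsilon>0$ small enough that $\lambda\pm\epsilon\in(0,1)$. By convexity (Lemma \ref{le3}) both $y_{\lambda-\epsilon}$ and $y_{\lambda+\epsilon}$ lie in $(\omega_t,\mathcal{I})-\text{LIM}^{r}x_t$; they are distinct, and $y_\lambda$ is their midpoint, so the previous paragraphs applied to this new pair place $y_\lambda$ in the interior. The main obstacle is the transfer of Lemma \ref{Th 1.2*}, which is stated for sequences indexed by $\mathbb{N}$, to the subsystem indexed by $A\in\mathcal{F}(\mathcal{I})$; this is resolved by the trivial reindexing of $A$ as a subsequence, after which the limsups along $A$ become ordinary limsups along $\mathbb{N}$ and the lemma applies verbatim.
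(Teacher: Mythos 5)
Your proposal is correct and follows essentially the same route as the paper's proof: both reduce to showing the midpoint of two distinct limit points is interior, extract a set $A\in\mathcal{F}(\mathcal{I})$ via Lemma \ref{Th 2.2}, apply Lemma \ref{Th 1.2*} with $\varepsilon_0=\beta\|y_0-y_1\|\in(0,2r]$ (justified by Lemma \ref{le 2}) to drop the roughness degree to $r-\delta$, and then inflate by a small ball using $\mathcal{I}$-boundedness. The only differences are organizational: you invoke Proposition \ref{th 2.4} as a black box for the inflation step where the paper redoes that triangle-inequality estimate inline, and you spell out the reduction from arbitrary points $y_\lambda$ of the open segment to midpoints of nearby pairs, a step the paper asserts without detail.
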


\begin{proof}
	If $(\omega_t,\mathcal{I})-\text{LIM}^{r} x_t$ is empty or singletone then we are done. So we assume that $(\omega_t,\mathcal{I})-\text{LIM}^{r} x_t$ contains more than one element. Therefore, Lemma \ref{le 1} ensures that $\{\omega_t\}_{t\in\mathbb{N}}$ is $\mathcal{I}$-bounded. Then, there exists $\mu>0$ such that 
	$$
	T=\{t\in\mathbb{N}: \omega_t<\mu\}\in\mathcal{F}(\mathcal{I}).
	$$ 
Observe that to prove $(\omega_t,\mathcal{I})-\text{LIM}^{r} x_t$ is strictly convex, it is enough to show that for any $y_1,y_2\in(\omega_t,\mathcal{I})-\text{LIM}^{r} x_t$ with $\|y_1-y_2\|>0$, we have
	$$
	\hat{y}:=\frac{1}{2}(y_1+y_2)\in int((\omega_t,\mathcal{I})-\text{LIM}^{r} x_t).
	$$
	Since $y_1,y_2\in(\omega_t,\mathcal{I})-\text{LIM}^{r} x_t$, Lemma \ref{Th 2.2} entails that
	there exists $A=\{t_1<t_2<...\}\subseteq\mathbb{N}$ with $A\in\mathcal{F}(\mathcal{I})$ such that \begin{equation*} \limsup_{k\to\infty}\|z_{1,t_k}\|\leq r,~\mbox{ and }~ \limsup_{k\to\infty}\|z_{2,t_k}\|\leq r, \end{equation*}
	where $z_{1,k}=\omega_k(y_1-x_k)$ and $z_{2,k}=\omega_k(y_2-x_k)$.\\
	Since $\{\omega_t\}_{t\in\mathbb{N}}$ is a weighted sequence, there exists $\lambda>0$ such that
	$\omega_t>\lambda$ for each $t\in\mathbb{N}.$ Thus, we have 
	\begin{equation*}\|z_{1,t_k}-z_{2,t_k}\|
		=\omega_{t_k}\|y_1-y_2\|> \lambda\|y_1-y_2\|=\varepsilon_*, \mbox{ for some }\varepsilon_*>0.
	\end{equation*}
	From Lemma \ref{le 2}, we get $\|y_1-y_2\|\leq  \frac{2r}{\lambda}$ i.e., $\varepsilon_* \in (0,2r].$  Now, in view of Lemma {\ref{Th 1.2*}}, there exists $\delta(\varepsilon_*)>0$ such that
	\begin{equation*}\limsup_{k\to\infty}\omega_{t_k}\|\frac{1}{2}(y_1+y_2)-x_{t_k}\|=\limsup_{k\to\infty}\frac{1}{2}\|z_{1,t_k}+z_{2,t_k}\|\leq r-\delta(\varepsilon_*).\end{equation*}
	Let $\varepsilon>0$ be given. Then, by the definition of $limsup$, there exists $k'\in\mathbb{N}$ such that 
	\begin{equation*}\omega_{t_k}\|\frac{1}{2}(y_1+y_2)-x_{t_k}\|<r-\delta(\varepsilon_*)+\varepsilon~\mbox{ for all }~k>k'.\end{equation*}
	Subsequently, 
	\begin{equation}\label{Eq 2}
		\{k\in\mathbb{N}:\omega_k\|\frac{1}{2}(y_1+y_2)-x_{k}\|<r-\delta(\varepsilon_*) +\varepsilon\}\supseteq A\setminus\{t_1,t_2,...,t_{k'}\}.
	\end{equation}
    Let us set
	 $$
	 C=\{k\in\mathbb{N}:\omega_k\|\frac{1}{2}(y_1+y_2)-x_{k}\|<r-\delta(\varepsilon_*) +\varepsilon\}\cap T
	 .$$ 
	 Evidently, we will have $C\in\mathcal{F}(\mathcal{I})$.
	We fix an $\varepsilon_0>0$ such that $\mu\varepsilon_0<\delta(\varepsilon_*)$ (we select $\mu$ from the set $T$).	Now, pick arbitrary $y\in \hat{y}+\varepsilon_0B_X$ where $	\hat{y}:=\frac{1}{2}(y_1+y_2).$ Then, for each $k\in C,$ we can write
	\begin{align*}
		\omega_k\|x_k-y\|&\leq \omega_k\|y-\hat{y}\|+ \omega_k\|\hat{y}-x_k\|\\
		&\leq \mu\varepsilon_0+r-\delta(\varepsilon_*)+\varepsilon< r+\varepsilon.
	\end{align*}
	Therefore, $\{k\in\mathbb{N}:\omega_k\|x_k-y\|>r+\varepsilon\}\subseteq \mathbb{N}\setminus C$, i.e., $\{k\in\mathbb{N}:\omega_k\|x_k-y\|>r+\varepsilon\}\in\mathcal{I}$. Consequently, $y\in (\omega_t,\mathcal{I})-\text{LIM}^{r} x_t$.  
Thus we obtain  $\hat{y}+\varepsilon_0B_X\subseteq(\omega_t,\mathcal{I})-\text{LIM}^{r} x_t$, i.e., $\hat{y}\in int((\omega_t,\mathcal{I})-\text{LIM}^{r} x_t)$.  Hence we deduce that  $(\omega_t,\mathcal{I})-\text{LIM}^{r} x_t$ is strictly convex.
 \end{proof}

The uniform convexity of the normed space in  Theorem \ref{strict convex} cannot be generally relaxed. We provide the following example to substantiate  our claim.
\begin{example} \label{ex 2.12}
	Consider any $P$-ideal $\mathcal{I}$ of $\mathbb{N}.$
	Let $\ell^\infty(\mathbb{R})$ be the normed  space consisting of  bounded sequences $\{x_t\}_{t\in\mathbb{N}}$ of real numbers endowed with $sup$ norm, i.e.,
	$$\|\{x_1,x_2,...\}\|_{\infty}:=\displaystyle\sup_{t\in\mathbb{N}}|x_t|.$$ 
	Note that $\ell^\infty(\mathbb{R})$  is not uniformly convex. Now, consider the sequence $\{e_t\}_{t\in\mathbb{N}},$ where $e_k$ represents the sequence $(0,...,0,1,0,...)$, with the $k$-th place being $1$ and the remaining positions being $0$.
	The weighted sequence $\{\omega_t\}_{t\in\mathbb{N}}$ is defined as follows:
	\begin{equation*}
		\omega_t=2+\frac{1}{t},~\mbox{ for each }~t\in\mathbb{N}.
	\end{equation*}
	Evidently, we have $\{e_1,e_2,e_3,...\}\subseteq (\omega_t,\mathcal{I})-\text{LIM}^{2} e_t$. We will show that $\hat e:=\frac{1}{2}(e_1+e_2)$ is not an interior point of $(\omega_t,\mathcal{I})-\text{LIM}^{2}e_t$. For each $\eta>0,$ we set $x_*=\{x_t\}_{t\in\mathbb{N}}\in\ell^\infty(\mathbb{R})$ such that 
	\begin{align*}
		x_{t}:=
		\begin{cases}
			~\frac{1}{2}~&\text{ for }~t\in \{1,2\}, \\
			\frac{-\eta}{2}~&\text{ otherwise}.
		\end{cases}
\end{align*}	 
	Then, it is clear that  
	$$
	x_*\in \hat{e}+{\eta}{B}_X~ \mbox{ and } ~ \|e_t-x_*\|=1+\frac{\eta}{2},~\mbox{ for all }~t\geq 3.
	$$
	If $0<\varepsilon_*<\frac{\eta}{2}$ then $\{t\in\mathbb{N}:\omega_t\|e_t-x_*\|>2+ \varepsilon_* \}\notin\mathcal{I}$, i.e., $x_*\notin (\omega_t,\mathcal{I})-\text{LIM}^{2} e_t$. Thus we can conclude that $\hat e\notin int ((\omega_t,\mathcal{I})-\text{LIM}^{2} e_t)$ since
	  for each $\eta>0$ there exists $x_*\in {B}_{\eta}(\hat{e})$ such that $x_*\notin (\omega_t,\mathcal{I})-\text{LIM}^{2} e_t$.
	
\end{example}

\begin{corollary}\label{Cor 2.2}
	Suppose $\mathcal{I}$ is a $P$-ideal and $X$ is uniformly convex. If $y\in int((\omega_t,\mathcal{I})-\text{LIM}^{r} x_t)$ then there exists $s\in[0,r)$ such that $y\in(\omega_t,\mathcal{I})-\text{LIM}^{s} x_t$. 
\end{corollary}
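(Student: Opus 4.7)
The plan is to leverage the hypothesis that $y$ lies in the \emph{interior} of $(\omega_t,\mathcal{I})-\text{LIM}^{r}x_t$ by exhibiting $y$ as a midpoint of two points of the limit set and then re-running the uniform-convexity argument from the proof of Theorem~\ref{strict convex}. First, choose $\varepsilon_0>0$ with $y+\varepsilon_0 B_X\subseteq (\omega_t,\mathcal{I})-\text{LIM}^{r}x_t$, fix any unit vector $u\in X$, and set $y_1:=y+\varepsilon_0 u$, $y_2:=y-\varepsilon_0 u$. Both $y_1,y_2$ lie in the rough weighted ideal limit set and satisfy $y=\tfrac{1}{2}(y_1+y_2)$. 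Since $y_1\neq y_2$, Lemma~\ref{le 1} forces $\{\omega_t\}_{t\in\mathbb{N}}$ to be $\mathcal{I}$-bounded.

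Next, because $\mathcal{I}$ is a $P$-ideal, Lemma~\ref{Th 2.2} applied to $y_1$ and $y_2$ yields $A_1,A_2\in\mathcal{F}(\mathcal{I})$ with $\limsup_{t\in A_i}\omega_t\|x_t-y_i\|\leq r$ for $i=1,2$. Let $A:=A_1\cap A_2\in\mathcal{F}(\mathcal{I})$, enumerate $A=\{t_1<t_2<\cdots\}$, and define $z_{1,k}:=\omega_{t_k}(y_1-x_{t_k})$, $z_{2,k}:=\omega_{t_k}(y_2-x_{t_k})$. Picking $\lambda>0$ with $\omega_t>\lambda$ for every $t$ (available since $\{\omega_t\}$ is a weighted sequence), we obtain $\|z_{1,k}-z_{2,k}\|=2\omega_{t_k}\varepsilon_0>2\lambda\varepsilon_0=:\varepsilon_*$, while Lemma~\ref{le 2} applied to the nondegenerate ball $y+\varepsilon_0 B_X$ sitting inside the limit set forces $\varepsilon_*\in(0,2r]$.

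Invoking uniform convexity through Lemma~\ref{Th 1.2*} now produces $\delta(\varepsilon_*)>0$ for which
$$
\limsup_{k\to\infty}\omega_{t_k}\|y-x_{t_k}\|\;=\;\limsup_{k\to\infty}\tfrac{1}{2}\|z_{1,k}+z_{2,k}\|\;\leq\; r-\delta(\varepsilon_*).
$$
Setting $s:=\max\{0,\,r-\delta(\varepsilon_*)\}\in[0,r)$ and applying Lemma~\ref{Th 2.2} in the reverse direction with the index set $A\in\mathcal{F}(\mathcal{I})$ then gives $y\in(\omega_t,\mathcal{I})-\text{LIM}^{s}x_t$, as required. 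The only delicate bookkeeping points are the verification $\varepsilon_*\leq 2r$ needed for Lemma~\ref{Th 1.2*} (handled by Lemma~\ref{le 2}) and the fact that the finite intersection $A_1\cap A_2$ remains in $\mathcal{F}(\mathcal{I})$ (automatic because $\mathcal{F}(\mathcal{I})$ is a filter). Thus the corollary reduces to precisely the uniform-convexity mechanism that drives Theorem~\ref{strict convex}, applied in the opposite direction: from \emph{interior membership at level $r$} to \emph{ordinary membership at a strictly smaller level $s$}.
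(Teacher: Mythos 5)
Your proof is correct and follows essentially the same route as the paper: the paper likewise writes $y=\tfrac{1}{2}(y_1+y_2)$ for two distinct points $y_1,y_2$ of the limit set and then invokes the uniform-convexity estimate $\limsup_{k}\omega_{t_k}\|\tfrac{1}{2}(y_1+y_2)-x_{t_k}\|\le r-\delta(\varepsilon_*)$ established in the proof of Theorem~\ref{strict convex} (its Eq.~(\ref{Eq 2})) to conclude $y\in(\omega_t,\mathcal{I})-\text{LIM}^{s}x_t$ with $s=r-\delta(\varepsilon_*)$. The only difference is that you re-derive that estimate in full (via Lemmas~\ref{le 1}, \ref{le 2}, \ref{Th 2.2} and \ref{Th 1.2*}) rather than citing it, which is fine.
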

\begin{proof}
	Since $y$ is an interior point of $(\omega_t,\mathcal{I})-\text{LIM}^{r} x_t$, there exist $y_1,y_2\in (\omega_t,\mathcal{I})-\text{LIM}^{r} x_t$ with $y_1\neq y_2$ such that $y=\frac{1}{2}(y_1+y_2)$. Now, in view of Eq (\ref{Eq 2}) of Theorem \ref{strict convex}, we obtain that
	$$
	\{k\in\mathbb{N}:\omega_k\|\frac{1}{2}(y_1+y_2)-x_{k}\|>r-\delta(\varepsilon_*) +\varepsilon\}\in\mathcal{I}.
	$$
	 As a consequence, $y\in (\omega_t,\mathcal{I})-\text{LIM}^{s} x_t$ with $s=r-\delta(\varepsilon_*)$.
\end{proof}

\section{Minimal convergent degree-related observation for $(\omega_t,\mathcal{I})-\text{LIM}^{r} x_t$.}\label{sec 3}
Here, we continue our quest along the same lines as in \cite{phu2001rough,xuan2003rough} which in certain instances enable us to present some classifications of the set $(\omega_t,\mathcal{I})-\text{LIM}^{r} x_t$, notably in reflexive normed spaces and uniformly convex Banach spaces when the degree of roughness `$r$' is minimal. Before proceeding further, let us introduce the concept of the \textit{minimal weighted ideal convergent degree} $\tilde{r}_{(\omega_t,\mathcal{I})}(x)$ of a sequence $x=\{x_t\}_{t\in\mathbb{N}}$ w.r.t. a given weighted sequence $\{\omega_t\}_{t\in\mathbb{N}}$ and a given ideal $\mathcal{I}$.
\begin{eqnarray}\label{2}
	\tilde{r}_{(\omega_t,\mathcal{I})}(x):=\inf\{r\in(0,\infty):(\omega_t,\mathcal{I})-\text{LIM}^{r} x_t\neq\varnothing\}.
\end{eqnarray}
Note that, in light of the definition of rough $(\omega_t, \mathcal{I})$-convergence, one can readily derive the following results:
\begin{equation}\label{eq1}
	(\omega_t,\mathcal{I})-\text{LIM}^{r_1} x_t\subseteq (\omega_t,\mathcal{I})-\text{LIM}^{r_2} x_t,~\mbox{ whenever }~ 0\leq r_1<r_2,
\end{equation}
\begin{equation}\label{Cor 2.3}
(\omega_t,\mathcal{I})-\text{LIM}^{r} x_t=\bigcap_{k\in\mathbb{N}} (\omega_t,\mathcal{I})-\text{LIM}^{(r+\frac{1}{k})} x_t ~\mbox{ for all }~r\geq 0.
\end{equation}
Observe that the monotonicity established in Eq (\ref{eq1}) entails that  
\begin{equation}\label{Eqn 2.3}
		(\omega_t,\mathcal{I})-\text{LIM}^{r} x_t
	\begin{cases}
		=\varnothing & \mbox{~for }~r<\tilde{r}_{(\omega_t,\mathcal{I})}(x), \\
		\neq\varnothing & \mbox{~for }~r>\tilde{r}_{(\omega_t,\mathcal{I})}(x).\\
	\end{cases} 
\end{equation}
We simply denote $\tilde{r}_{(\omega_t,\mathcal{I})}(x)$ by $\tilde{r}(x)$ when there is no confusion in the notations.\\


In Equation \ref{Eqn 2.3}, it is naturally questioned whether if $r=\tilde{r}(x),$ the set $	(\omega_t,\mathcal{I})-\text{LIM}^{r} x_t$ is empty or not. We present the results from Example \ref{non-reflexive}, Theorem \ref{Th 2.6}, and Theorem \ref{Th 2.14} to answer this question.

\begin{example}\label{non-reflexive}	Consider the ideal $\mathcal{I}_{\delta}$ of natural density zero subsets of $\mathbb{N}$, and the normed space $C[0,1]$ of continuous functions on $[0,1]$ equipped with the integral norm, i.e., $$\|f\|=\int_{0}^{1}|f(u)| du ~\mbox{ for each } ~f\in C[0,1].$$
	The sequences $x=\{x_t\}_{t\in\mathbb{N}}$ and $\{\omega_t\}_{t\in\mathbb{N}}$ are defined, respectively, as follows:
	$$x_t(u)=
	\begin{cases}
		0 &~\mbox{ if }~0\leq u\leq\frac{1}{2},\\ 
		t(u-\frac{1}{2}) &~\mbox{ if }~ \frac{1}{2}<u<\frac{1}{2}+\frac{1}{t},\\
		1 &~\mbox{ if }~ \frac{1}{2}+\frac{1}{t}\leq u\leq 1.
	\end{cases}$$  
	and $$\omega_t=
	\begin{cases}
		\frac{1+t}{t} &~\mbox{ if }~t\notin\{n^2:n\in\mathbb{N}\},\\
		~t^2 &~ \mbox{ if }~t\in\{n^2:n\in\mathbb{N}\}.
	\end{cases}$$
	It is evident that the sequence $\{x_t\}_{t\in\mathbb{N}}$ does not possess any convergent subsequences (note that $x_n\xrightarrow{\|.\|} x$ ensures that
	\begin{equation*} x(u)=
		\begin{cases}
			0&~\mbox{for }~0\leq u<\frac{1}{2},\\
			1&~\mbox{for }~\frac{1}{2}<u\leq 1,
	\end{cases}\end{equation*}
	$-$ which is a discontinous function). We intend to show that $(\omega_t,\mathcal{I}_{\delta})-\text{LIM}^{0} x_t=\varnothing.$ Assume, on the contrary, that there exists $x_*\in C[0,1]$ such that  $x_*\in (\omega_t,\mathcal{I}_{\delta})-\text{LIM}^{0} x_t$. Since $\mathcal{I}_{\delta}$ is a $P$-ideal, in light of Lemma \ref{Th 2.2}, there exists $A\subseteq\mathbb{N}$ with $\delta(A)=1$ such that 
	$$
	\lim_{t\in A} \|x_t-x_*\|\leq \limsup_{t\in A} \omega_t\|x_t-x_*\| =0~\mbox{ (since  $\omega_t\geq 1$ for each $t\in\mathbb{N}$)},
	$$
	$-$ which contradicts the fact that $\{x_t\}_{t\in\mathbb{N}}$ has no subsequential limits. Therefore, we deduce that $(\omega_t,\mathcal{I}_{\delta})-\text{LIM}^{0} x_t=\varnothing$.

	Next, observe that  $\{x_t\}_{t\in\mathbb{N}}$ is Cauchy in $C[0,1]$. Choose an $r>0$. Then
	\begin{equation*}\|x_{t_1}-x_{t_2}\|<r~\mbox{ whenever }~t_1,t_2>\left[\frac{1}{r}\right]+1.
	\end{equation*}
	Let $\varepsilon>0$ be arbitrary. Note that, from the construction of $\{\omega_t\}_{t\in\mathbb{N}},$ it follows that there exists  $t_0\in\mathbb{N}$ such that
	$$\omega_t<1+\frac{\varepsilon}{r} ~\mbox{ for each }~t\in\{k\in\mathbb{N}:k>t_0\} \setminus\{1^2,2^2,3^2,...\}.$$
	Let us set $t_r=\max\{i_r,t_0\}$, where $i_r=\left[\frac{1}{r}\right]+1$. Therefore, we have
	\begin{equation*}
		\{t\in\mathbb{N}:\omega_t\|x_t-x_{i_r}\|\leq r+\varepsilon\}\supseteq \{t\in\mathbb{N}:t> t_r\}\setminus \{1^2,2^2,3^2,...\}.
	\end{equation*}
	Since $\delta(\{1^2,2^2,3^2,...\})=0$, we obtain $\delta(\{t\in\mathbb{N}:\omega_t\|x_t-x_{i_r}\|>r+\varepsilon\})=0$. Since $\varepsilon>0$ was arbitrary, we deduce that
	$x_{i_r}\in (\omega_t,\mathcal{I}_{\delta})-\text{LIM}^{r} x_t$. Therefore, 
	$(\omega_t,\mathcal{I}_{\delta})-\text{LIM}^{r}x_t\neq\varnothing$ for every  $r>0$. Thereon $\tilde{r}(x)=0$, but 
	$(\omega_t,\mathcal{I}_{\delta})-\text{LIM}^{0} x_t=\varnothing.$	
\end{example}


\begin{proposition}\label{Th 2.4}
	Let $x=\{x_t\}_{t\in\mathbb{N}}$ be a sequence with values in $X$, and $\{\omega_t\}_{t\in\mathbb{N}}$ be an $\mathcal{I}$-bounded weighted sequence.
	If $r>\tilde{r}(x)$ then $int((\omega_t,\mathcal{I})-\text{LIM}^{r} x_t)\neq \varnothing$. 
\end{proposition}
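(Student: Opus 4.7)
The plan is to derive this proposition as a consequence of the earlier ball-absorbing inclusion in Proposition \ref{th 2.4}, since that result already does the real work whenever $\{\omega_t\}_{t\in\mathbb{N}}$ is $\mathcal{I}$-bounded. The strict inequality $r>\tilde{r}(x)$ guarantees, via the defining infimum (\ref{2}) and the non-emptiness conclusion in Eq~(\ref{Eqn 2.3}), that for any intermediate value $s$ with $\tilde{r}(x)<s<r$ the set $(\omega_t,\mathcal{I})-\text{LIM}^{s}x_t$ is non-empty. I would fix such an $s$ and pick any element $x_*\in(\omega_t,\mathcal{I})-\text{LIM}^{s}x_t$; the task then reduces to exhibiting an open norm-ball around $x_*$ that sits inside $(\omega_t,\mathcal{I})-\text{LIM}^{r}x_t$.

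For the main step I would invoke Proposition \ref{th 2.4}: the $\mathcal{I}$-boundedness of $\{\omega_t\}_{t\in\mathbb{N}}$ produces a constant $\mu>0$ such that
$$(\omega_t,\mathcal{I})-\text{LIM}^{s}x_t+\sigma B_X\subseteq (\omega_t,\mathcal{I})-\text{LIM}^{s+\sigma\mu}x_t$$
for every $\sigma\geq 0$. Setting $\sigma_0:=\frac{r-s}{2\mu}>0$ gives $s+\sigma_0\mu=\frac{r+s}{2}<r$, so coupling the display above with the monotonicity recorded in Eq~(\ref{eq1}) yields
$$x_*+\sigma_0 B_X\subseteq (\omega_t,\mathcal{I})-\text{LIM}^{(r+s)/2}x_t\subseteq (\omega_t,\mathcal{I})-\text{LIM}^{r}x_t.$$
This exhibits a non-degenerate ball around $x_*$ inside the target set, so $x_*\in int\bigl((\omega_t,\mathcal{I})-\text{LIM}^{r}x_t\bigr)$, which proves the claim.

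In short, the argument is essentially a packaging of Proposition \ref{th 2.4}; there is no serious obstacle here, because the $\mathcal{I}$-boundedness hypothesis is precisely what converts norm slack into roughness slack via the constant $\mu$, and the strict gap $r-\tilde{r}(x)>0$ supplies exactly the buffer needed to absorb the thickening $\sigma_0\mu$. The only point that deserves a word of care is the choice of a \emph{strict} intermediate $s>\tilde{r}(x)$ instead of $s=\tilde{r}(x)$, since Example \ref{non-reflexive} demonstrates that $(\omega_t,\mathcal{I})-\text{LIM}^{\tilde{r}(x)}x_t$ can be empty and therefore cannot serve as the starting point for the argument.
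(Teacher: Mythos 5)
Your proposal is correct and follows essentially the same route as the paper: the paper also picks $\sigma\in(0,r-\tilde{r}(x))$, takes a point $x_*$ of the non-empty set $(\omega_t,\mathcal{I})-\text{LIM}^{r-\sigma}x_t$, and uses the $\mathcal{I}$-bound $\mu$ to show $x_*+\frac{\sigma}{\mu}B_X\subseteq(\omega_t,\mathcal{I})-\text{LIM}^{r}x_t$; the only difference is that the paper inlines that computation while you cite Proposition~\ref{th 2.4} directly (correctly noting that the constant $\mu$ there depends only on the $\mathcal{I}$-boundedness of $\{\omega_t\}_{t\in\mathbb{N}}$, not on $\sigma$). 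Your caveat about needing a strict intermediate $s>\tilde{r}(x)$ is exactly the point the paper also relies on via Eq~(\ref{Eqn 2.3}).
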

\begin{proof}
	Assume that $r>\tilde{r}(x).$ Let us pick $\sigma\in(0,r-\tilde{r}(x)),$ i.e., $r-\sigma>\tilde{r}(x).$ Then Eq (\ref{Eqn 2.3}) entails that $(\omega_t,\mathcal{I})-\text{LIM}^{r-\sigma} x_t\neq\varnothing$.  So pick an $x_*\in (\omega_t,\mathcal{I})-\text{LIM}^{r-\sigma} x_t.$  Let $\varepsilon>0$ be arbitrary.
	Then we have 
	  $$ 	A=\{t\in\mathbb{N}:\omega_t\|x_t-x_*\|>r-\sigma+\varepsilon\}\in\mathcal{I}.
	  $$ 
	  Since $\{\omega_t\}_{t\in\mathbb{N}}$ is $\mathcal{I}$-bounded, there exists $\mu>0$ such that 
	  $$
	  B=\{t\in\mathbb{N}:\omega_t>\mu\}\in\mathcal{I}.
	  $$
	Let us pick any  $y\in x_*+\frac{\sigma}{\mu}{B}_X$. Now observe that for each $t\in\mathbb{N}\setminus( A\cup B)$, we obtain
	\begin{align*}
		\omega_t\|x_t-y\|&\leq \omega_t\|x_*-y\|+\omega_t\|x_t-x_*\|\\
		&\leq \frac{\sigma\omega_t}{\mu}+\omega_t\|x_t-x_*\|\leq r+\varepsilon.
	\end{align*}
	Therefore, $\{t\in\mathbb{N}:\omega_t\|x_t-y\|>r+\varepsilon\}\subseteq A\cup B$.  Since $A\cup B\in\mathcal{I}$, we obtain that $y\in (\omega_t,\mathcal{I})-\text{LIM}^{r} x_t$. Thus, we can conclude that $x_*\in int((\omega_t,\mathcal{I})-\text{LIM}^{r} x_t)$.
\end{proof}
Now an important result is evinced, which is gleaned from Theorem \ref{Th 2.1}.

\begin{proposition}\label{Pro 3}
	Suppose  $x=\{x_t\}_{t\in\mathbb{N}}$ is contained in some totally bounded subset of $X$. Then $int((\omega_t,\mathcal{I})-\text{LIM}^{\tilde{r}(x)} x_t)=\varnothing$. 
\end{proposition}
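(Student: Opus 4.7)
The plan is to argue by contradiction and reduce everything to Theorem~\ref{Th 2.1}. Suppose, on the contrary, that $int((\omega_t,\mathcal{I})-\text{LIM}^{\tilde r(x)} x_t) \neq \varnothing$. Then there exist $x_* \in X$ and $\sigma>0$ such that
$$
x_* + \sigma B_X \subseteq (\omega_t,\mathcal{I})-\text{LIM}^{\tilde r(x)} x_t.
$$

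First, I would dispose of the degenerate case $\tilde r(x)=0$. Since $\{\omega_t\}_{t\in\mathbb{N}}$ is a weighted sequence, there is $\beta>0$ with $\omega_t>\beta$ for every $t$, hence $\displaystyle\liminf_{t\in\mathbb{N}}\omega_t\geq\beta>0$, and Lemma~\ref{le 2} yields $\text{diam}((\omega_t,\mathcal{I})-\text{LIM}^{0} x_t)\leq 0$. Thus this limit set is at most a singleton and cannot contain the nondegenerate ball $x_*+\sigma B_X$, contradicting our assumption.

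Now assume $\tilde r(x)>0$. I would apply Theorem~\ref{Th 2.1} with $r:=\tilde r(x)$, the above $\sigma$, and the above $x_*$ (the hypothesis that $\{x_t\}_{t\in\mathbb{N}}$ is contained in a totally bounded subset of $X$ is precisely what Theorem~\ref{Th 2.1} requires). The conclusion provides some $\bar\omega>0$ with
$$
\tilde r(x)\geq \sigma\bar\omega \quad\text{and}\quad x_*\in (\omega_t,\mathcal{I})-\text{LIM}^{\tilde r(x)-\sigma\bar\omega} x_t.
$$

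It remains to read off a contradiction from the second conclusion. If $\tilde r(x)-\sigma\bar\omega>0$, we exhibit a strictly smaller positive roughness at which the limit set is nonempty, directly contradicting the infimum definition~(\ref{2}) of $\tilde r(x)$. If instead $\tilde r(x)-\sigma\bar\omega=0$, then $(\omega_t,\mathcal{I})-\text{LIM}^{0} x_t\neq\varnothing$, and the monotonicity (\ref{eq1}) forces $(\omega_t,\mathcal{I})-\text{LIM}^{r} x_t\neq\varnothing$ for every $r>0$, so $\tilde r(x)=0$, contradicting $\tilde r(x)=\sigma\bar\omega>0$. The main (and essentially only) obstacle is Theorem~\ref{Th 2.1} itself, which we may assume; once granted, the remainder is just bookkeeping of the two boundary cases ($\tilde r(x)=0$ and $\tilde r(x)-\sigma\bar\omega=0$), which are handled by Lemma~\ref{le 2} and the monotonicity relation~(\ref{eq1}), respectively.
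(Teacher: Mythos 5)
Your proof is correct and follows essentially the same route as the paper: assume a ball $x_*+\sigma B_X$ lies in $(\omega_t,\mathcal{I})-\text{LIM}^{\tilde r(x)}x_t$, invoke Theorem~\ref{Th 2.1} to produce a strictly smaller roughness degree with non-empty limit set, and contradict the minimality of $\tilde r(x)$. The only difference is bookkeeping: the paper first disposes of the non-$\mathcal{I}$-bounded case via Lemma~\ref{le 1}, whereas you instead handle the boundary cases $\tilde r(x)=0$ (via Lemma~\ref{le 2}) and $\tilde r(x)-\sigma\bar\omega=0$ explicitly --- a slightly more careful treatment, since Theorem~\ref{Th 2.1} formally requires $r>0$ and the infimum in~(\ref{2}) ranges over $(0,\infty)$.
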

\begin{proof} 
	First, assume that $\{\omega_t\}_{t\in\mathbb{N}}$ is not $\mathcal{I}$-bounded. Then, by Lemma \ref{le 1}, $(\omega_t,\mathcal{I})-\text{LIM}^{r} x_t$ has atmost one element. So we are done. Now, suppose that $\{\omega_t\}_{t\in\mathbb{N}}$ is $\mathcal{I}$-bounded. Assume, on the contrary, that $int((\omega_t,\mathcal{I})-\text{LIM}^{\tilde{r}(x)} x_t)\neq\varnothing$. So there exist $y\in X$ and $\sigma>0$  such that $y+\sigma {B}_X\subseteq (\omega_t,\mathcal{I})-\text{LIM}^{\tilde{r}(x)} x_t$. Therefore, by Theorem \ref{Th 2.1},  there exists $r'<\tilde{r}(x)$ such that $(\omega_t,\mathcal{I})-\text{LIM}^{r'} x_t\neq\varnothing-$ which contradicts the minimality of $\tilde{r}(x)$. Consequently, $(\omega_t,\mathcal{I})-\text{LIM}^{\tilde{r}(x)} x_t$ has no interior point.
\end{proof}

Let us now recall the definition of a reflexive normed space.
\begin{definition}\cite{megginson2012introduction}
A normed space  $X$ is reflexive if the canonical embedding map $x \mapsto x^{**}$ from $X$ to its double dual $ X^{**}$ is surjective. This map is defined by $ x^{**}(x^*) = x^*(x)$ for each $x^* \in X^*$, where $X^* $ denotes the dual of $X$.
\end{definition}
The following characterization of reflexive normed spaces will be utilized in Theorem \ref{Th 2.6}.
\begin{lemma} \cite[Theorem 1.13.6]{megginson2012introduction}\label{smulian}
A normed space $X$ is reflexive if and only if  $\displaystyle\bigcap_{k\in\mathbb{N}} C_k\neq\varnothing$ whenever $\{C_k\}_{k\in\mathbb{N}}$ is a sequence of non-empty, closed, bounded and convex subsets of $X$ such that $C_k\supseteq C_{k+1}$ for each $k$.
\end{lemma}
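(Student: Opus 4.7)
The plan is to split the equivalence into two directions and treat them independently, leveraging the weak topology on $X$ as the principal tool.

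First, for the direction $(\Rightarrow)$, assuming $X$ is reflexive, I would invoke Kakutani's theorem, which states that $X$ is reflexive if and only if the closed unit ball $B_X$ is weakly compact. Given a decreasing sequence $\{C_k\}_{k\in\mathbb{N}}$ of non-empty, closed, bounded, convex subsets of $X$, each $C_k$ is convex and norm-closed, hence weakly closed by Mazur's theorem. Since $C_1$ is bounded, it is contained in some multiple $\rho B_X$ of the unit ball, which is weakly compact by reflexivity. Each $C_k$ is then a weakly closed subset of the weakly compact set $\rho B_X$, hence itself weakly compact. The nested family $\{C_k\}_{k\in\mathbb{N}}$ has the finite intersection property, so by weak compactness $\bigcap_{k\in\mathbb{N}} C_k \neq \varnothing$.

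Second, for the direction $(\Leftarrow)$, my plan is to argue the contrapositive using James' theorem: if $X$ is not reflexive, then there exists a continuous linear functional $f \in X^*$ with $\|f\| = 1$ that does not attain its norm on $B_X$. I would then set $C_k = \{x \in B_X : f(x) \geq 1 - \tfrac{1}{k}\}$ for each $k \in \mathbb{N}$. Each $C_k$ is non-empty (since $\sup_{x \in B_X} f(x) = 1$), norm-closed, bounded, and convex, and the sequence is nested with $C_{k+1} \subseteq C_k$. However, $\bigcap_{k\in\mathbb{N}} C_k = \{x \in B_X : f(x) = 1\} = \varnothing$ because $f$ does not attain its norm, contradicting the hypothesis.

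The principal obstacle will be the reverse direction, which ultimately rests on James' theorem, a notoriously deep result in Banach space theory. One could alternatively bypass James' theorem by working through the canonical embedding $X \hookrightarrow X^{**}$ and using Goldstine's theorem together with the Banach--Alaoglu theorem to show that the intersection condition forces the image of $B_X$ to be weak$^*$-closed in $B_{X^{**}}$, thereby coinciding with $B_{X^{**}}$ via Goldstine density; this establishes surjectivity of the canonical embedding. That route trades the appeal to James' theorem for a more involved dual-space argument but is more self-contained. In either case, the delicate point is recovering reflexivity from a purely intersection-theoretic hypothesis.
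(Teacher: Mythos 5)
This lemma is quoted in the paper directly from Megginson (Theorem 1.13.6; it is \v{S}mulian's theorem) without any internal proof, so there is no in-paper argument to compare against; I assess your proposal on its own terms and against the standard proof. Your forward direction is correct and is the standard one: Mazur's theorem makes each $C_k$ weakly closed, Kakutani's theorem makes $\rho B_X$ weakly compact, and the finite intersection property finishes it.

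The backward direction, as written, has a genuine gap. James' theorem characterizes reflexivity among \emph{Banach} spaces, whereas the lemma is stated for normed spaces. An incomplete normed space is automatically non-reflexive (a reflexive space is isometric to its complete bidual), yet it may happen that every $f\in X^*$ attains its norm on $B_X$ --- James himself constructed such an incomplete space (Israel J. Math., 1971). Consequently the step ``$X$ not reflexive $\Rightarrow$ some norm-one $f$ fails to attain its norm'' is not licensed for a general normed space, and your sets $C_k=\{x\in B_X: f(x)\ge 1-\tfrac{1}{k}\}$ cannot be produced in that case. The repair is easy but must be made explicit: first show that the intersection hypothesis forces completeness. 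If $\{x_n\}_{n\in\mathbb{N}}$ is a non-convergent Cauchy sequence, put $C_k=\overline{\mathrm{conv}}\{x_n:n\ge k\}$; these are nonempty, closed, bounded, convex and nested with $\mathrm{diam}(C_k)\to 0$, so any $y\in\bigcap_{k}C_k$ would satisfy $\|y-x_k\|\le \mathrm{diam}(C_k)\to 0$, making $\{x_n\}$ convergent --- hence the intersection is empty and the hypothesis fails. Once $X$ is known to be complete, your James-based construction is valid (over real scalars; over $\mathbb{C}$ replace $f$ by $\operatorname{Re}f$), though it rests on machinery far deeper than the statement itself. The alternative you sketch --- Helly/Goldstine/Banach--Alaoglu through the canonical embedding --- is essentially the textbook proof and is the preferable, self-contained route.
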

The subsequent theorem ensures the existence of the \textit{minimal weighted ideal convergent degree} for any $(\omega_t,\mathcal{I})$-convergent sequence in reflexive normed spaces. 

\begin{theorem}\label{Th 2.6}
For any sequence $x=\{x_t\}_{t\in\mathbb{N}}$ with values in a reflexive normed space $X$,  $(\omega_t,\mathcal{I})-\text{LIM}^{\tilde{r}(x)} x_t\neq\varnothing$.
\end{theorem}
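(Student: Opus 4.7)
The plan is to express $(\omega_t,\mathcal{I})-\text{LIM}^{\tilde{r}(x)} x_t$ as a countable intersection of non-empty, closed, bounded, convex subsets of $X$ and then invoke the Smulian-type characterization of reflexivity recorded in Lemma \ref{smulian}. The identity in Eq (\ref{Cor 2.3}), applied at $r=\tilde{r}(x)$, already produces
$$
(\omega_t,\mathcal{I})-\text{LIM}^{\tilde{r}(x)} x_t \;=\; \bigcap_{k\in\mathbb{N}} (\omega_t,\mathcal{I})-\text{LIM}^{\tilde{r}(x)+\frac{1}{k}} x_t,
$$
so I will set $C_k := (\omega_t,\mathcal{I})-\text{LIM}^{\tilde{r}(x)+\frac{1}{k}} x_t$ for every $k\in\mathbb{N}$ and verify that the sequence $\{C_k\}_{k\in\mathbb{N}}$ meets the four hypotheses required by Lemma \ref{smulian}.

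The verifications are essentially an assembly of earlier results. Each $C_k$ is non-empty because $\tilde{r}(x)+\frac{1}{k}>\tilde{r}(x)$, so the second case of Eq (\ref{Eqn 2.3}) applies. Each $C_k$ is closed and convex by Lemma \ref{le3}, while the monotonicity in Eq (\ref{eq1}) at once gives the nesting $C_{k+1}\subseteq C_k$. For boundedness I appeal to Lemma \ref{le 2}: since $\{\omega_t\}_{t\in\mathbb{N}}$ is a weighted sequence, there exists $\beta>0$ with $\omega_t>\beta$ for every $t$, so $\liminf_{t\in\mathbb{N}} \omega_t \geq \beta>0$, and consequently
$$
\mathrm{diam}(C_k)\;\leq\; \frac{2\bigl(\tilde{r}(x)+\tfrac{1}{k}\bigr)}{\beta}\;<\;\infty.
$$
An application of Lemma \ref{smulian} then yields $\bigcap_{k\in\mathbb{N}} C_k\neq\varnothing$, which, in view of the opening identity, is exactly the desired conclusion.

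The one conceptual point that needs to be addressed first is the degenerate situation $\tilde{r}(x)=\infty$, i.e., the case in which the set appearing in Eq (\ref{2}) is empty: here there is no $r$ for which the rough weighted ideal limit set is non-empty, and the statement must be read as vacuous. Otherwise the argument is essentially a packaging step, with reflexivity of $X$ used only at the final invocation of Lemma \ref{smulian}. The main obstacle is thus not computational but recognitional: one has to observe that the intersection identity in Eq (\ref{Cor 2.3}) precisely bridges the ``supercritical'' regime $r>\tilde{r}(x)$, where non-emptiness is already guaranteed by Eq (\ref{Eqn 2.3}), with the ``critical'' regime $r=\tilde{r}(x)$ that the theorem addresses; once this is seen, reflexivity upgrades the nested sequence of non-empty convex bodies to a non-empty intersection.
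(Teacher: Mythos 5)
Your proposal is correct and follows essentially the same route as the paper: both set $C_k=(\omega_t,\mathcal{I})-\text{LIM}^{\tilde{r}(x)+\frac{1}{k}}x_t$, verify via Lemmas \ref{le 2} and \ref{le3} and Eqs (\ref{eq1}), (\ref{Eqn 2.3}) that these form a nested sequence of non-empty closed bounded convex sets, and then combine Lemma \ref{smulian} with Eq (\ref{Cor 2.3}). Your explicit treatment of the degenerate case $\tilde{r}(x)=\infty$ and of boundedness via $\liminf_t\omega_t\geq\beta>0$ is a small but welcome addition the paper leaves implicit.
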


\begin{proof} 
	Let us set $C_k=(\omega_t,\mathcal{I})-\text{LIM}^{(\tilde{r}(x)+\frac{1}{k})} x_t$ for each $k\in\mathbb{N}$. Then, in view of Lemma \ref{le 2} and Lemma \ref{le3}, we obtain that $\{C_k\}_{k\in\mathbb{N}}$ is a sequence of closed bounded convex subsets of $X$.
	Note, by Eq (\ref{eq1}) and Eq (\ref{Eqn 2.3}), that
	 $$
	C_{k+1}\subseteq C_{k}~\mbox{ and }~ C_k\neq\varnothing~ \mbox{ for each } k\in\mathbb{N}.
	 $$  
	Therefore, by Lemma \ref{smulian}, we can infer that  $\displaystyle\bigcap_{k\in\mathbb{N}} (\omega_t,\mathcal{I})-\text{LIM}^{(\tilde{r}(x)+\frac{1}{k})} x_t\neq\varnothing.$ Finally, in view of Eq (\ref{Cor 2.3}), we conclude that $(\omega_t,\mathcal{I})-\text{LIM}^{\tilde{r}(x)} x_t\neq\varnothing.$
\end{proof}

In Example \ref{non-reflexive}, we have demonstrated that the conclusion of the previous result does not always hold in arbitrary normed spaces.

\begin{corollary}
Suppose the sequence $x=\{x_t\}_{t\in\mathbb{N}}$ is bounded in some finite dimensional normed space $X$. Then for $r=\tilde{r}(x)$, 
	$$
(\omega_t,\mathcal{I})-\text{LIM}^{r} x_t\neq\varnothing,~\mbox{ and }~int((\omega_t,\mathcal{I})-\text{LIM}^{r} x_t)=\varnothing.
	$$ 
	The converse holds true if $\{\omega_t\}_{t\in\mathbb{N}}$ is $\mathcal{I}$-bounded.
\end{corollary}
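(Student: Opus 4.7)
The plan is to derive both directions from results already established in Sections~\ref{sec 2} and~\ref{sec 3}, together with two classical facts about finite-dimensional normed spaces.

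For the forward direction I would invoke: (i) every finite-dimensional normed space is reflexive (all norms being equivalent to the Euclidean norm, the canonical embedding into the bidual is a surjective isomorphism), and (ii) every bounded subset of a finite-dimensional normed space is relatively compact by the Heine--Borel property, hence totally bounded. Fact (i) lets me apply Theorem~\ref{Th 2.6} directly to conclude $(\omega_t,\mathcal{I})-\text{LIM}^{\tilde{r}(x)}x_t\neq\varnothing$. Fact (ii) says that the range $\{x_t:t\in\mathbb{N}\}$ is contained in a totally bounded subset of $X$, so Proposition~\ref{Pro 3} yields $int\bigl((\omega_t,\mathcal{I})-\text{LIM}^{\tilde{r}(x)}x_t\bigr)=\varnothing$. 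The forward direction is therefore essentially immediate once these two facts are cited.

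For the converse I would assume the two limit-set conditions together with the $\mathcal{I}$-boundedness of $\{\omega_t\}$ and aim to conclude that $x$ is bounded in $X$. Fixing $x_*\in(\omega_t,\mathcal{I})-\text{LIM}^{\tilde{r}(x)}x_t$ and an arbitrary $\varepsilon>0$, and setting $A=\{t:\omega_t>\mu\}\in\mathcal{I}$ (for some $\mu>0$ witnessing $\mathcal{I}$-boundedness) and $B_\varepsilon=\{t:\omega_t\|x_t-x_*\|>\tilde{r}(x)+\varepsilon\}\in\mathcal{I}$, the elementary estimate
\begin{equation*}
\|x_t-x_*\|\;\leq\;\frac{\tilde{r}(x)+\varepsilon}{\omega_t}\;\leq\;\frac{\tilde{r}(x)+\varepsilon}{\beta}\qquad(t\in\mathbb{N}\setminus(A\cup B_\varepsilon))
\end{equation*}
already supplies boundedness of $x$ along a set in $\mathcal{F}(\mathcal{I})$. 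To upgrade this to genuine boundedness on all of $\mathbb{N}$ I would argue by contrapositive: if $x$ were unbounded in $X$, finite-dimensionality lets me extract a subsequence $\|x_{t_k}\|\to\infty$, and either $\tilde{r}(x)=\infty$ (contradicting non-emptiness of the limit set) or Proposition~\ref{Th 2.4} combined with Proposition~\ref{Pro 3} would force $int\bigl((\omega_t,\mathcal{I})-\text{LIM}^{\tilde{r}(x)}x_t\bigr)\neq\varnothing$, contradicting the hypothesis.

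The hardest step will be this last upgrade in the converse: passing from boundedness along $\mathcal{F}(\mathcal{I})$ to boundedness of $x$ on all of $\mathbb{N}$. The routine triangle-inequality estimate only controls $\|x_t\|$ outside an $\mathcal{I}$-set, so the final contradiction must exploit the two structural hypotheses (non-emptiness and empty interior of the limit set at precisely $r=\tilde{r}(x)$) simultaneously with the $\mathcal{I}$-boundedness of $\{\omega_t\}$ and the finite dimensionality of $X$, invoking Propositions~\ref{Th 2.4} and \ref{Pro 3} as the key leverage. Once that combined argument is assembled, the corollary follows.
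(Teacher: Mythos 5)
Your forward direction coincides with the paper's: finite-dimensionality gives reflexivity, so Theorem \ref{Th 2.6} yields $(\omega_t,\mathcal{I})-\text{LIM}^{\tilde r(x)}x_t\neq\varnothing$, and boundedness of the range in a finite-dimensional space gives total boundedness, so Proposition \ref{Pro 3} yields the empty interior. That half is fine and essentially identical to the paper.

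The converse is where the proposal goes astray. The converse being asserted (and proved in the paper) is not ``the two limit-set conditions imply that $x$ is bounded''; it is: if $\{\omega_t\}_{t\in\mathbb{N}}$ is $\mathcal{I}$-bounded and there exists $r\geq 0$ with $(\omega_t,\mathcal{I})-\text{LIM}^{r}x_t\neq\varnothing$ and $int((\omega_t,\mathcal{I})-\text{LIM}^{r}x_t)=\varnothing$, then $r=\tilde r(x)$. That version is a two-line argument: the contrapositive of Proposition \ref{Th 2.4} forces $r\leq\tilde r(x)$ (otherwise the interior would be non-empty), while non-emptiness of the limit set forces $r\geq\tilde r(x)$ by Eq (\ref{Eqn 2.3}); neither finite-dimensionality nor boundedness of $x$ is needed. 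The statement you set out to prove instead --- that the hypotheses imply $x$ is bounded on all of $\mathbb{N}$ --- is false in general, which is why the ``hardest step'' you flag cannot be completed: altering $x_t$ on any set $E\in\mathcal{I}$ changes neither $(\omega_t,\mathcal{I})-\text{LIM}^{r}x_t$ nor $\tilde r(x)$, so for, say, $\mathcal{I}=\mathcal{I}_\delta$ one can redefine $x_{t^2}$ to have norm $t$ and preserve every hypothesis of the converse while destroying boundedness. Control of $\|x_t\|$ outside a set in $\mathcal{I}$ is the most that can be extracted, and no combination of Propositions \ref{Th 2.4} and \ref{Pro 3} will bridge that gap.
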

\begin{proof}
Assume that $r=\tilde{r}$.
	Since $X$ is finite dimensional, $X$ must be reflexive.  As a result, Theorem \ref{Th 2.6} entails that $(\omega_t,\mathcal{I})-\text{LIM}^{r} x_t\neq\varnothing$. Note that $\{x_t\}_{t\in\mathbb{N}}$ is always contained in some totally bounded set since $\{x_t\}_{t\in\mathbb{N}}$ is bounded and $X$ is finite dimensional. Then, by Proposition \ref{Pro 3}, we conclude that
	$int((\omega_t,\mathcal{I})-\text{LIM}^{r} x_t)=\varnothing.$

	Conversely, let  $\{\omega_t\}_{t\in\mathbb{N}}$ be $\mathcal{I}$-bounded, and assume that there exists $r\geq 0$ such that  	
	$$
	(\omega_t,\mathcal{I})-\text{LIM}^{r} x_t\neq\varnothing,~\mbox{ and }~int((\omega_t,\mathcal{I})-\text{LIM}^{r} x_t)=\varnothing.
	$$ 
	Then, in view of Proposition \ref{Th 2.4}, we must have  $r\leq\tilde{r}(x).$ Also, it is evident that $r\geq \tilde{r}(x)$; otherwise we will have
	$(\omega_t,\mathcal{I})-\text{LIM}^{r} x_t=\varnothing$. Therefore, we deduce that $r=\tilde{r}(x)$.
\end{proof}

We conclude this section by characterizing the set $(\omega_t, \mathcal{I})-\text{LIM}^{\tilde r(x)} x_t$ in the context of uniformly convex Banach spaces.

\begin{theorem}\label{Th 2.14}
Let $r\geq0$ and $x=\{x_t\}_{t\in\mathbb{N}}$ be a sequence with values in a uniformly convex Banach space $X$.
Then the following results hold:
\begin{itemize}
\item[(\textbf{a1})] If $(\omega_t,\mathcal{I})-\text{LIM}^{r} x_t$ is singletone and $\{\omega_t\}_{t\in\mathbb{N}}$ is $\mathcal{I}$-bounded  then $r=\tilde{r}(x)$.
\item [(\textbf{a2})] If $r=\tilde{r}(x)$ and $\mathcal{I}$ is a $P$-ideal then $(\omega_t,\mathcal{I})-\text{LIM}^{r} x_t$ is singleton.
\end{itemize}
\end{theorem}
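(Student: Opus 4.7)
The plan is to combine the structural results already established in Sections~\ref{sec 2} and \ref{sec 3} — namely the interior-non-emptiness above the minimal degree (Proposition \ref{Th 2.4}), existence of limit points at the minimal degree in reflexive spaces (Theorem \ref{Th 2.6}), strict convexity of the limit set (Theorem \ref{strict convex}), and the ``interior points are attainable with smaller roughness'' corollary (Corollary \ref{Cor 2.2}) — with the defining infimum of $\tilde{r}(x)$ in Eq~(\ref{2}).

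For part (\textbf{a1}), I would first observe that the non-emptiness of $(\omega_t,\mathcal{I})-\text{LIM}^{r} x_t$ directly forces $r\geq\tilde{r}(x)$ from the definition of $\tilde{r}(x)$ as an infimum. For the reverse inequality, I would argue by contradiction: assume $r>\tilde{r}(x)$. Under the hypothesis that $\{\omega_t\}$ is $\mathcal{I}$-bounded, Proposition \ref{Th 2.4} yields $\mathrm{int}((\omega_t,\mathcal{I})-\text{LIM}^{r} x_t)\neq\varnothing$, so the set contains an open ball of $X$; this is incompatible with it being a singleton in any normed space of dimension at least $1$ (the case $X=\{0\}$ being vacuous). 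Hence $r=\tilde{r}(x)$.

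For part (\textbf{a2}), I would first produce a point in the limit set: a uniformly convex Banach space is reflexive (Milman--Pettis), so Theorem \ref{Th 2.6} furnishes $(\omega_t,\mathcal{I})-\text{LIM}^{\tilde{r}(x)} x_t\neq\varnothing$. Supposing, for contradiction, that the set contains two distinct points $y_1,y_2$, Lemma \ref{le 1} immediately gifts $\mathcal{I}$-boundedness of $\{\omega_t\}$ (the implicit hypothesis used inside the strict-convexity proof). Theorem \ref{strict convex} then applies — using that $\mathcal{I}$ is a $P$-ideal and $X$ is uniformly convex — to place the midpoint $\hat{y}:=\tfrac{1}{2}(y_1+y_2)$ in $\mathrm{int}((\omega_t,\mathcal{I})-\text{LIM}^{r} x_t)$. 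Corollary \ref{Cor 2.2} now yields some $s\in[0,r)$ with $\hat{y}\in (\omega_t,\mathcal{I})-\text{LIM}^{s} x_t$, so in particular $(\omega_t,\mathcal{I})-\text{LIM}^{s} x_t\neq\varnothing$ for $s<\tilde{r}(x)$, contradicting the infimum in Eq~(\ref{2}). Hence the limit set must be a singleton.

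I do not anticipate a major technical obstacle, since the result is a clean synthesis of previously proved statements. The only mildly delicate point is bookkeeping of implicit hypotheses: in particular, the $\mathcal{I}$-boundedness that Theorem \ref{strict convex} needs internally is not listed as an assumption in (\textbf{a2}), but it is supplied automatically by Lemma \ref{le 1} the moment one assumes two distinct points in the limit set, so no extra hypothesis beyond those in the statement of the theorem is required.
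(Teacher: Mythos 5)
Your proposal is correct and follows essentially the same route as the paper: part (a1) uses the infimum definition plus Proposition \ref{Th 2.4} to rule out $r>\tilde{r}(x)$, and part (a2) combines reflexivity (Milman--Pettis) with Theorem \ref{Th 2.6}, strict convexity from Theorem \ref{strict convex}, and Corollary \ref{Cor 2.2} to contradict the minimality of $\tilde{r}(x)$. The only difference is presentational — the paper first shows the interior is empty and then invokes strict convexity, whereas you assume two distinct points and derive the contradiction directly — and your remark that Lemma \ref{le 1} supplies the $\mathcal{I}$-boundedness needed inside the strict-convexity argument is a correct piece of bookkeeping.
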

\begin{proof}
	(\textbf{a1}) First, assume that $(\omega_t,\mathcal{I})-\text{LIM}^{r} x_t$ is singleton and $\{\omega_t\}_{t\in\mathbb{N}}$ is $\mathcal{I}$-bounded. Then, in view of Eq (\ref{Eqn 2.3}), we obtain that $r\geq \tilde{r}(x)$. Clearly,
	$r>\tilde{r}(x)$ is not possible, otherwise, by Proposition \ref{Th 2.4}, $(\omega_t,\mathcal{I})-\text{LIM}^{r} x_t$ would contain a non-trivial ball. Thus we must have $r=\tilde{r}(x)$.

	(\textbf{a2}) We now assume that  $r=\tilde{r}(x)$ and $\mathcal{I}$ is a $P$-ideal. 
	Since every uniformly convex Banach space is reflexive (see \cite[Theorem 5.2.15]{megginson2012introduction}),
	 Theorem \ref{Th 2.6} entails that
	$(\omega_t,\mathcal{I})-\text{LIM}^{\tilde r(x)} x_t\neq \varnothing$.
	We claim that  $int((\omega_t,\mathcal{I})-\text{LIM}^{\tilde r} x_t)=\varnothing$ because 
	$int((\omega_t,\mathcal{I})-\text{LIM}^{\tilde r} x_t)\neq\varnothing$ ensures, by Corollary \ref{Cor 2.2}, that there exists  $s<\tilde{r}(x)$ such that $(\omega_t,\mathcal{I})-\text{LIM}^{s} x_t\neq\varnothing-$ which contradicts the minimality of $\tilde r(x)$.
	Finally, due to strict convexity of $(\omega_t,\mathcal{I})-\text{LIM}^{\tilde r(x)} x_t$ (see Theorem \ref{strict convex}), we conclude that $(\omega_t,\mathcal{I})-\text{LIM}^{\tilde r(x)} x_t$ is singleton. \end{proof}

\section{Rough weighted ideal cluster points.}\label{sec 4}
In this section, we introduce the concept of rough weighted ideal cluster points of a sequence in normed spaces and 
show the novelty of this notion by presenting an example (see Example \ref{ex 2}) that highlights the different behavior of rough weighted ideal cluster points compared to rough statistical cluster points of a sequence. 
Next, we provide a characterization of maximal admissible ideals through the notions of rough weighted ideal limit set and rough weighted ideal cluster point set of a sequence. Furthermore, several fascinating set theoretic results concerning rough weighted ideal cluster point set of a sequence are presented.
 \\


Before we proceed, let us define the concept of rough weighted ideal cluster points of a sequence in normed spaces. 

\begin{definition}\label{Def 2.2} 
For a sequence $\{x_t\}_{t\in\mathbb{N}}$ with values in a normed space $X$, we say that $\gamma\in X$ is  a rough weighted ideal cluster point of  $\{x_t\}_{t\in\mathbb{N}}$  w.r.t. a given weighted sequence $\{\omega_t\}_{t\in\mathbb{N}}$ and a  non-trivial ideal $\mathcal{I}$ (briefly, $(\omega_t,\mathcal{I})$-cluster point)
with degree of roughness $r\geq 0,$ if 
$$\{t\in\mathbb{N}:\omega_t\|x_t-\gamma\|<r+\varepsilon\}\notin\mathcal{I}~\mbox{ for every }~\varepsilon>0.$$
Denote by $(\omega_t,\mathcal{I})-\Gamma_{x_t}^r$ the set of rough $(\omega_t,\mathcal{I})$-cluster points of $\{x_t\}_{t\in\mathbb{N}}$ with degree of roughness $r$.
If \(\omega_{t} = 1\) for all \(t \in \mathbb{N}\) and \(\mathcal{I} = \mathcal{I}_{\delta}\), then one derives the definition of  rough statistical cluster points as delineated in \cite{aytar2017rough}. Additionally, by setting $r = 0$, we reach the definition of statistical cluster points \cite{fridy1993statistical}.
\end{definition}

\begin{theorem}\label{Non-empty} Let the weighted sequence  $\{\omega_t\}_{t\in\mathbb{N}}$ be $\mathcal{I}$-bounded and the sequence $\{x_t\}_{t\in\mathbb{N}}$ has $\mathcal{I}$-non-thin subsequence which lies on a totally bounded set of $X.$ Then for any  $r>0,$ $(\omega_t,\mathcal{I})-\Gamma_{x_t}^r\neq \varnothing.$
\end{theorem}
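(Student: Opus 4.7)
The plan is to use the total boundedness to cover the relevant values of the sequence by finitely many small balls, and then invoke the pigeonhole principle for ideals (a finite union of sets in $\mathcal{I}$ lies in $\mathcal{I}$) to locate a cluster point among the centers of these balls. Specifically, by the hypothesis, there exists $M\subseteq\mathbb{N}$ with $M\notin\mathcal{I}$ such that $\{x_t:t\in M\}$ is contained in a totally bounded set $Y\subseteq X$. Also, the $\mathcal{I}$-boundedness of $\{\omega_t\}_{t\in\mathbb{N}}$ supplies $\mu>0$ with $T=\{t\in\mathbb{N}:\omega_t\leq\mu\}\in\mathcal{F}(\mathcal{I})$.

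Next, I would replace $M$ by $M':=M\cap T$. Writing $M=(M\cap T)\cup (M\setminus T)$ and noting that $M\setminus T\subseteq \mathbb{N}\setminus T\in\mathcal{I}$, one sees that $M'\notin\mathcal{I}$ (otherwise $M$ would be in $\mathcal{I}$). Along $M'$, one simultaneously has $\{x_t:t\in M'\}\subseteq Y$ (totally bounded) and $\omega_t\leq\mu$.

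Now fix $r>0$ and choose $\rho:=\frac{r}{2\mu}>0$. By total boundedness of $Y$, pick finitely many $y_1,\ldots,y_m\in X$ with $Y\subseteq\bigcup_{j=1}^{m} B(y_j,\rho)$. Then $M'=\bigcup_{j=1}^{m}\{t\in M':x_t\in B(y_j,\rho)\}$. Since $\mathcal{I}$ is closed under finite unions and $M'\notin\mathcal{I}$, some index $j_0$ yields $M_{j_0}:=\{t\in M':x_t\in B(y_{j_0},\rho)\}\notin\mathcal{I}$.

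Finally, I would verify $\gamma:=y_{j_0}$ is a rough $(\omega_t,\mathcal{I})$-cluster point with roughness $r$. Indeed, for every $\varepsilon>0$ and every $t\in M_{j_0}$,
\[
\omega_t\|x_t-\gamma\|\leq \mu\cdot\|x_t-y_{j_0}\|<\mu\rho=\frac{r}{2}<r+\varepsilon,
\]
so $M_{j_0}\subseteq\{t\in\mathbb{N}:\omega_t\|x_t-\gamma\|<r+\varepsilon\}$, and the latter set consequently fails to belong to $\mathcal{I}$. The only delicate point is the reduction $M\mapsto M\cap T$, ensuring that we can simultaneously exploit both hypotheses (total boundedness on an $\mathcal{I}$-non-thin set and the uniform bound on $\omega_t$); once this is secured, the covering/pigeonhole argument is routine.
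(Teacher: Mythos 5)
Your proof is correct and uses the same core mechanism as the paper's: total boundedness yields a finite cover of $Y$ by small balls, and finite additivity of $\mathcal{I}$ then forces one ball to capture an $\mathcal{I}$-non-thin set of indices, whose center (thanks to the bound $\omega_t\leq\mu$ holding on a set of the filter $\mathcal{F}(\mathcal{I})$) is a rough cluster point. The only difference is presentational: the paper argues by contradiction, assigning to each $\xi\in Y$ a witness $\varepsilon_\xi$ and taking an infimum before covering, whereas your direct pigeonhole version exhibits a cluster point explicitly among the ball centers and avoids that infimum step.
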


\begin{proof} Let $\{x_t\}_{t\in K}$ (where $K\not\in \mathcal{I}$) be a $\mathcal{I}$-non-thin subsequence of the sequence $\{x_t\}_{t\in\mathbb{N}}$ which lies on a totally bounded set $Y$ of $X.$  If possible let $(\omega_t,\mathcal{I})-\Gamma_{x_t}^{r_0}= \varnothing$ for some $r_0>0.$ If $\xi \in Y$ then $\xi \not\in (\omega_t,\mathcal{I})-\Gamma_{x_t}^r.$ Therefore there exists a $\varepsilon_{\xi}>0$ such that $$A(\varepsilon_{\xi})=\{t\in\mathbb{N}:\omega_t\|x_t-\xi\|<r+\varepsilon_{\xi}\}\in\mathcal{I}.$$
	From the $\mathcal{I}$-boundedness of the weighted sequence  $\{\omega_t\}_{t\in\mathbb{N}}$ there exists a positive real number $M$ such that $B=\{t\in \mathbb{N}: \omega_t<M\}\in \mathcal{F(\mathcal{I})}.$ This shows that $$B\cap (\mathbb{N}\setminus A(\varepsilon_{\xi}))\subseteq \{t\in\mathbb{N}:M\|x_t-\xi\|\geq r+\varepsilon_{\xi}\} \in \mathcal{F(\mathcal{I})}~\implies~ \{t\in\mathbb{N}:\|x_t-\xi\|<\frac{r+\varepsilon_{\xi}}{M}\}\in  \mathcal{I}.$$
	Let $\varepsilon_0=\displaystyle{\inf_{\xi \in Y}} \varepsilon_{\xi}.$ Then $\frac{r+\varepsilon_{0}}{M}>0.$ Therefore there exists $y_1,y_2,...,y_m$ such that
	
	$Y\subseteq \displaystyle{\bigcup_{i=1}^m} ~ B(y_i, \frac{r+\varepsilon_{0}}{M})~\implies ~ K=\{t\in \mathbb{N}:x_t\in Y\} \subseteq \displaystyle{\bigcup_{i=1}^m} \{t\in \mathbb{N}: \|x_t-y_i\|<\frac{r+\varepsilon_{0}}{M}\}\in \mathcal{I}.$ \end{proof}

\begin{remark} If we choose the weighted sequence  $\{\omega_t\}_{t\in\mathbb{N}}$ is not $\mathcal{I}$-bounded then the above theorem is not true. Let us consider the sequence $x=\{x_{t}\}_{t\in \mathbb{N}}=\{\frac{1}{t}\}_{t\in \mathbb{N}}$ lies on a totally bounded set as well as compact set $[0,1]$ of normed space $\mathbb{R}$ and the weighted sequence $\omega=\{\omega_t\}_{t\in\mathbb{N}}=\{t^2\}_{t\in \mathbb{N}}.$  
	Then  for any $r>0,$ and any ideal $\mathcal{I},$ we get $(\omega_t,\mathcal{I})-\Gamma_{x_t}^r=\varnothing.$
\end{remark}

Aytar proved in \cite[Theorem 2.2]{aytar2017rough} that  for a sequence $\{x_t\}_{t\in\mathbb{N}}$ in $X$, the rough statistical cluster point set is always closed in $X$. However, a notable deviation is observed in the case of $(\omega_t,\mathcal{I}) - \Gamma_{x_t}^r$, as presented in the next example.

\begin{example}\label{ex 2}
	Consider the normed space $(C[0,1],\|\cdot\|)$, where $\|f\|=\int_{0}^{1}|f(u)|du$ for all $f\in C[0,1]$. Now choose the ideal $\mathcal{I}_{\delta}$ of natural density zero subsets of $\mathbb{N}$. For each $j\in\mathbb{N}$, we set $A_{j}=\{2^{j-1}(2t-1):t\in\mathbb{N}\}\subset\mathbb{N}$. Then, notice that the sequence $\{A_t\}_{t\in\mathbb{N}}$ of sets is a partition of $\mathbb{N}$ such that $A_j\notin\mathcal{I}_{\delta}$ for any $j\in\mathbb{N}$. Let $r> 0$. We now define
	\begin{align*}
		x_{t}(u)= u^{j-1}+\frac{2ru}{t}~\mbox{ for }~t\in A_j,u\in[0,1]
		~~\text{ and }~~~
		\omega_{t}=
		t&~\mbox{ for all }~t\in \mathbb{N}. 
\end{align*}
Then $\{\omega_t\}_{t\in\mathbb{N}}$ is not statistically bounded. For each $j\in\mathbb{N}$, we set $g_{j}(u)=u^{j-1}$, where $u\in[0,1]$. Then, observe that 
$$
A_j\subseteq\{t\in\mathbb{N}:\omega_t\|x_t-g_j\|<r+\varepsilon\}\notin\mathcal{I}_{\delta}.
$$
Therefore, $g_j\in(\omega_t,\mathcal{I}_{\delta})-\Gamma_{x_t}^r$ for all $j\in\mathbb{N}$. Note that for each $k\in\mathbb{N}$, there exists $j\in\mathbb{N}$ such that $k=2^{j-1}(2t-1)$ for some $t\in\mathbb{N}$. Then
\begin{align*}
	\omega_k\|x_k-0\|=\omega_k\int_{0}^{1}(u^{j-1}+\frac{2ru}{k})du=\frac{2^{j-1}(2t-1)}{j}+r> \frac{1}{2}+r.
\end{align*}
Thus, we have
$$
\varnothing=\{t\in\mathbb{N}:\omega_t\|x_t-0\|<r+\frac{1}{2}\}\in\mathcal{I}_{\delta}.
$$
This ensures that $0\notin(\omega_t,\mathcal{I}_{\delta})-\Gamma_{x_t}^r$. Since  $g_j\xrightarrow{\|\cdot\|}0$, we conclude that $(\omega_t,\mathcal{I}_{\delta})-\Gamma_{x_t}^r$ is not closed in $C[0,1]$.
\end{example}

We will now establish criteria for both the weighted sequence $\{\omega_t\}_{t\in\mathbb{N}}$ and the ideal $\mathcal{I}$ that ensure $(\omega_t,\mathcal{I})-\Gamma_{x_t}^r$'s closedness, which will be elaborated on in the following two propositions.

\begin{proposition}\label{closedness cluster}
If the weighted sequence $\{\omega_t\}_{t\in\mathbb{N}}$ is $\mathcal{I}$-bounded then  $(\omega_t,\mathcal{I})-\Gamma_{x_t}^r$ is a closed subset of $X$.
\end{proposition}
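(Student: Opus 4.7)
The plan is to take an arbitrary limit point $\gamma$ of $(\omega_t,\mathcal{I})-\Gamma_{x_t}^r$ and verify directly, from the definition, that $\gamma$ itself lies in this set. So I would fix a sequence $\{\gamma_n\}_{n\in\mathbb{N}}\subseteq (\omega_t,\mathcal{I})-\Gamma_{x_t}^r$ with $\gamma_n\to\gamma$ in $X$, and show that for every $\varepsilon>0$ the set $\{t\in\mathbb{N}:\omega_t\|x_t-\gamma\|<r+\varepsilon\}$ does not belong to $\mathcal{I}$. The role of the $\mathcal{I}$-boundedness hypothesis is to let me transfer a good estimate on $\omega_t\|x_t-\gamma_n\|$ into a good estimate on $\omega_t\|x_t-\gamma\|$, because without an a.e.\ bound on $\omega_t$ the triangle-inequality step cannot absorb the difference $\omega_t\|\gamma_n-\gamma\|$.

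More precisely, by $\mathcal{I}$-boundedness pick $\mu>0$ with $B:=\{t\in\mathbb{N}:\omega_t\leq \mu\}\in\mathcal{F}(\mathcal{I})$. Given $\varepsilon>0$, choose $n$ so large that $\|\gamma_n-\gamma\|<\frac{\varepsilon}{2\mu}$, and set
$$A_n=\{t\in\mathbb{N}:\omega_t\|x_t-\gamma_n\|<r+\tfrac{\varepsilon}{2}\}.$$
Since $\gamma_n\in(\omega_t,\mathcal{I})-\Gamma_{x_t}^r$, Definition \ref{Def 2.2} gives $A_n\notin\mathcal{I}$. For any $t\in A_n\cap B$ the triangle inequality yields
$$\omega_t\|x_t-\gamma\|\leq \omega_t\|x_t-\gamma_n\|+\omega_t\|\gamma_n-\gamma\|< r+\tfrac{\varepsilon}{2}+\mu\cdot\tfrac{\varepsilon}{2\mu}=r+\varepsilon,$$
hence $A_n\cap B\subseteq\{t\in\mathbb{N}:\omega_t\|x_t-\gamma\|<r+\varepsilon\}$.

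To finish, I need $A_n\cap B\notin\mathcal{I}$. This is the one place where an ideal-theoretic argument is needed, but it is immediate: writing $A_n=(A_n\cap B)\cup (A_n\setminus B)$, the set $A_n\setminus B\subseteq \mathbb{N}\setminus B\in\mathcal{I}$, so if $A_n\cap B$ were in $\mathcal{I}$ then $A_n$ would be as well, contradicting $A_n\notin\mathcal{I}$. Hence $\{t\in\mathbb{N}:\omega_t\|x_t-\gamma\|<r+\varepsilon\}\notin\mathcal{I}$ for every $\varepsilon>0$, proving $\gamma\in(\omega_t,\mathcal{I})-\Gamma_{x_t}^r$. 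The main (very mild) obstacle is recognising that one cannot directly pass from $A_n$ to a subset of the desired set without first intersecting with a set in $\mathcal{F}(\mathcal{I})$ that controls the weights; Example \ref{ex 2} shows that dropping the $\mathcal{I}$-boundedness assumption would genuinely break this step.
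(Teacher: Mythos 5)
Your proof is correct and follows essentially the same route as the paper: pick a set in $\mathcal{F}(\mathcal{I})$ on which the weights are bounded by $\mu$, intersect it with the non-$\mathcal{I}$ set coming from a nearby cluster point $\gamma_n$ with $\|\gamma_n-\gamma\|<\varepsilon/(2\mu)$, and use the triangle inequality plus the fact that a set not in $\mathcal{I}$ cannot become a member of $\mathcal{I}$ after removing an $\mathcal{I}$-set. The only difference is that you spell out the last ideal-theoretic step, which the paper states in one line.
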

\begin{proof}
	Assume that  $\{\omega_t\}_{t\in\mathbb{N}}$ is $\mathcal{I}$-bounded. Then  $A=\{t\in\mathbb{N}:\omega_t<\mu\}\in\mathcal{F}(\mathcal{I})$ for some $\mu>0$. Let $\{y_t\}_{t\in\mathbb{N}}$ be a sequence in $(\omega_t,\mathcal{I})-\Gamma_{x_t}^r$ such that $\displaystyle\lim_{t\to\infty}y_t=\ell$. Choose an $\varepsilon>0$. Then there exists $t_0\in\mathbb{N}$ such that $\|y_{t_0}-\ell\|<\frac{\varepsilon}{2\mu}$. Since $y_{t_0}\in (\omega_t,\mathcal{I})-\Gamma_{x_t}^r$, we obtain 
	$$
	B=\{t\in\mathbb{N}:\omega_t\|x_t-y_{t_0}\|<r+\frac{\varepsilon}{2}\}\notin\mathcal{I}.
	$$
	We set 
	$C=\{t\in\mathbb{N}:\omega_t\|x_t-\ell\|<r+\varepsilon\}$. Then, observe that $A\cap B\subseteq C$. Since $A\in\mathcal{F}(\mathcal{I})$ and $B\notin\mathcal{I}$, we have $A\cap B\notin\mathcal{I}$. Consequently, $C\notin\mathcal{I}$. From the arbitrariness of $\varepsilon>0$, we have $\ell\in (\omega_t,\mathcal{I})-\Gamma_{x_t}^r$. Hence we deduce that $(\omega_t,\mathcal{I})-\Gamma_{x_t}^r$ is closed in $X$.
\end{proof}
Recall that an ideal $\mathcal{I}$ on $\mathbb{N}$ is termed maximal if, for any ideal $\mathcal{J}$ on $\mathbb{N}$ satisfying $\mathcal{I} \subseteq \mathcal{J} \subseteq \mathbb{N}$, we have either \( \mathcal{J} = \mathcal{I} \) or $ \mathcal{J} = \mathbb{N}$. The following fact regarding maximal ideals will be utilized in this paper.

\begin{lemma}\cite[Lemma 5.1]{kostyrko2000convergence}\label{lemma maximal}
	Let $\mathcal{I}$ be an maximal admissible ideal on $\mathbb{N}$. Then for each $A\subset\mathbb{N}$, we have either $A\in\mathcal{I}$ or $\mathbb{N}\setminus A\in\mathcal{I}$.
\end{lemma}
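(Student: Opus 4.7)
The plan is to prove the dichotomy by a standard ``extension'' argument: assume both $A\notin\mathcal{I}$ and $\mathbb{N}\setminus A\notin\mathcal{I}$ and construct an ideal strictly between $\mathcal{I}$ and $\mathcal{P}(\mathbb{N})$, contradicting the maximality of $\mathcal{I}$. Concretely, I would define
\[
\mathcal{J}=\{B\subseteq\mathbb{N}:B\cap A\in\mathcal{I}\},
\]
which is the ideal generated by $\mathcal{I}\cup\{\mathbb{N}\setminus A\}$.

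The first step is to verify that $\mathcal{J}$ is an ideal on $\mathbb{N}$: $\varnothing\in\mathcal{J}$ trivially; downward closure follows because $B'\subseteq B$ implies $B'\cap A\subseteq B\cap A$, and the latter lies in $\mathcal{I}$ by hereditariness of $\mathcal{I}$; closure under finite unions uses $(B_1\cup B_2)\cap A=(B_1\cap A)\cup(B_2\cap A)\in\mathcal{I}$. The second step is to observe the inclusion $\mathcal{I}\subseteq\mathcal{J}$, since any $B\in\mathcal{I}$ satisfies $B\cap A\subseteq B\in\mathcal{I}$.

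The third step is the key one: showing that the two containments $\mathcal{I}\subseteq\mathcal{J}\subseteq\mathcal{P}(\mathbb{N})$ are both strict. For strictness on the right, I would use the hypothesis $A\notin\mathcal{I}$: since $A\cap A=A\notin\mathcal{I}$, we have $A\notin\mathcal{J}$, so $\mathcal{J}\neq\mathcal{P}(\mathbb{N})$. For strictness on the left, I would use the hypothesis $\mathbb{N}\setminus A\notin\mathcal{I}$: since $(\mathbb{N}\setminus A)\cap A=\varnothing\in\mathcal{I}$, we have $\mathbb{N}\setminus A\in\mathcal{J}\setminus\mathcal{I}$. Combining these, $\mathcal{I}\subsetneq\mathcal{J}\subsetneq\mathcal{P}(\mathbb{N})$, which directly contradicts the maximality of $\mathcal{I}$ as stated just above the lemma.

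There is no real obstacle here; the only subtlety is picking the right auxiliary ideal $\mathcal{J}$ so that both $\mathbb{N}\setminus A$ enters it (breaking equality with $\mathcal{I}$) and $A$ stays out of it (preventing $\mathcal{J}$ from being the improper ideal). The definition $\mathcal{J}=\{B:B\cap A\in\mathcal{I}\}$ accomplishes exactly this and uses only the ideal axioms listed in Section~1, so no further machinery is required.
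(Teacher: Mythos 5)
Your proof is correct. Note that the paper does not actually prove this lemma: it is quoted verbatim from Kostyrko--\v{S}al\'at--Wilczy\'nski \cite[Lemma 5.1]{kostyrko2000convergence} with no argument supplied, so there is nothing in the text to compare against. Your extension argument is the standard one and is complete: the family $\mathcal{J}=\{B\subseteq\mathbb{N}:B\cap A\in\mathcal{I}\}$ is indeed an ideal, it contains $\mathcal{I}$, the assumption $A\notin\mathcal{I}$ keeps $A$ out of $\mathcal{J}$ (so $\mathcal{J}\neq\mathcal{P}(\mathbb{N})$), and the assumption $\mathbb{N}\setminus A\notin\mathcal{I}$ puts $\mathbb{N}\setminus A$ into $\mathcal{J}\setminus\mathcal{I}$, giving $\mathcal{I}\subsetneq\mathcal{J}\subsetneq\mathcal{P}(\mathbb{N})$ and contradicting maximality exactly as the paper defines it. Your side remark that $\mathcal{J}$ coincides with the ideal generated by $\mathcal{I}\cup\{\mathbb{N}\setminus A\}$ is also accurate. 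The only hypothesis you do not explicitly invoke is admissibility, but it is not needed for this implication beyond guaranteeing that $\mathcal{I}$ is proper, which your contradiction hypothesis already supplies.
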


\begin{proposition}\label{maximal closed cluster}
	If  $\mathcal{I}$ is a maximal admissible ideal on $\mathbb{N}$ then $(\omega_t,\mathcal{I})-\Gamma_{x_t}^r$ is closed in $X$.
\end{proposition}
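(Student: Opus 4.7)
The plan is to show that when $\mathcal{I}$ is a maximal admissible ideal, the set $(\omega_t,\mathcal{I})-\Gamma_{x_t}^r$ in fact coincides with $(\omega_t,\mathcal{I})-\text{LIM}^r x_t$; since the latter is closed by Lemma \ref{le3}, the conclusion follows immediately. Note that Proposition \ref{closedness cluster} cannot be invoked here, because for a maximal admissible ideal no strictly positive sequence that fails to be bounded in the usual sense can be $\mathcal{I}$-bounded (admissibility forces all finite sets into $\mathcal{I}$, so by the dichotomy of Lemma \ref{lemma maximal}, a cofinite set of indices on which $\omega_t > \mu$ cannot lie in $\mathcal{I}$).

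First, I would prove the inclusion $(\omega_t,\mathcal{I})-\text{LIM}^r x_t \subseteq (\omega_t,\mathcal{I})-\Gamma_{x_t}^r$, which needs only that $\mathcal{I}$ is nontrivial and admissible. If $\gamma$ lies in the limit set and $\varepsilon>0$, the set $\{t:\omega_t\|x_t-\gamma\|>r+\varepsilon\}$ belongs to $\mathcal{I}$, hence its complement is not in $\mathcal{I}$ (otherwise $\mathbb{N}\in\mathcal{I}$). This complement is contained in $\{t:\omega_t\|x_t-\gamma\|<r+2\varepsilon\}$, so the latter is not in $\mathcal{I}$ either; as $\varepsilon>0$ is arbitrary, $\gamma$ is a rough weighted $\mathcal{I}$-cluster point.

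The reverse inclusion is where maximality enters decisively. Suppose $\gamma\in(\omega_t,\mathcal{I})-\Gamma_{x_t}^r$ but $\gamma\notin(\omega_t,\mathcal{I})-\text{LIM}^r x_t$. Then there exists $\varepsilon_0>0$ with $A:=\{t:\omega_t\|x_t-\gamma\|>r+\varepsilon_0\}\notin\mathcal{I}$, and by Lemma \ref{lemma maximal} the complement $\mathbb{N}\setminus A=\{t:\omega_t\|x_t-\gamma\|\le r+\varepsilon_0\}$ lies in $\mathcal{I}$. In particular, the subset $\{t:\omega_t\|x_t-\gamma\|<r+\varepsilon_0/2\}$ lies in $\mathcal{I}$, contradicting the cluster-point condition taken with $\varepsilon=\varepsilon_0/2$. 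Hence $(\omega_t,\mathcal{I})-\Gamma_{x_t}^r=(\omega_t,\mathcal{I})-\text{LIM}^r x_t$, and invoking Lemma \ref{le3} one more time concludes that the set is closed.

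The main (and only) subtlety is the reverse inclusion: for a general admissible ideal, knowing $\{t:\omega_t\|x_t-\gamma\|>r+\varepsilon_0\}\notin\mathcal{I}$ gives no information about its complement, so cluster points may well fail to be limits (as Example \ref{ex 2} already shows). Maximality is precisely the tool that upgrades ``$A\notin\mathcal{I}$'' into ``$\mathbb{N}\setminus A\in\mathcal{I}$,'' which is what collapses $\Gamma_{x_t}^r$ onto $\text{LIM}^r x_t$ and delivers closedness without any hypothesis on $\{\omega_t\}_{t\in\mathbb{N}}$.
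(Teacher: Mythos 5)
Your argument is correct, but it takes a genuinely different route from the paper's. The paper proves Proposition \ref{maximal closed cluster} by splitting into cases on whether $\{\omega_t\}_{t\in\mathbb{N}}$ is $\mathcal{I}$-bounded: in the bounded case it simply invokes Proposition \ref{closedness cluster}, and in the unbounded case it shows $(\omega_t,\mathcal{I})-\Gamma_{x_t}^r$ has at most one element (two distinct cluster points $\gamma_1,\gamma_2$ would, via Lemma \ref{lemma maximal} and the triangle inequality, force the set $\{t\in\mathbb{N}:\omega_t>2(r+\varepsilon)/\|\gamma_1-\gamma_2\|\}$ into $\mathcal{I}$, contradicting non-$\mathcal{I}$-boundedness), so closedness is automatic. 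You instead establish the identity $(\omega_t,\mathcal{I})-\Gamma_{x_t}^r=(\omega_t,\mathcal{I})-\text{LIM}^r x_t$ and quote Lemma \ref{le3}; both of your inclusions are sound, and the maximality dichotomy is applied correctly in the reverse one. This identity is precisely the forward direction of the paper's Theorem \ref{maximal ideal}, which the paper proves later by the same device; your proof therefore front-loads a stronger structural fact (the cluster set inherits closedness, convexity and boundedness from the limit set all at once), whereas the paper's case analysis is more self-contained and keeps the two results independent.

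One aside in your write-up is false, though it does not affect the validity of the proof: you claim that for a maximal admissible ideal no weighted sequence that is unbounded in the usual sense can be $\mathcal{I}$-bounded, so that Proposition \ref{closedness cluster} "cannot be invoked." Your justification only covers the case $\omega_t\to\infty$, where $\{t\in\mathbb{N}:\omega_t>\mu\}$ is cofinite. If $\{\omega_t\}_{t\in\mathbb{N}}$ is merely unbounded, the set $\{t\in\mathbb{N}:\omega_t>\mu\}$ can be an infinite, co-infinite member of $\mathcal{I}$ --- take $\omega_t=t$ on some infinite $A\in\mathcal{I}$ and $\omega_t=1$ elsewhere --- and then the sequence is $\mathcal{I}$-bounded after all. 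Indeed, the paper's own proof invokes Proposition \ref{closedness cluster} in exactly that case.
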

\begin{proof}
If $\{\omega_t\}_{t\in\mathbb{N}}$ is $\mathcal{I}$-bounded, then by Proposition \ref{closedness cluster}, $(\omega_t,\mathcal{I})-\Gamma_{x_t}^r$ is closed in $X$. Let us suppose that $\{\omega_t\}_{t\in\mathbb{N}}$ is not $\mathcal{I}$-bounded. We claim that $(\omega_t,\mathcal{I})-\Gamma_{x_t}^r$ is either empty or singletone. Assume, on the contrary, that there exist $\gamma_1,\gamma_2\in (\omega_t,\mathcal{I})-\Gamma_{x_t}^r$  with $\gamma_1\neq\gamma_2$. Choose an $\varepsilon>0$. Since $\mathcal{I}$ is maximal admissible, note that 
$$\{t\in\mathbb{N}:\omega_t\|x_t-\gamma\|\geq  r+\varepsilon\}\in\mathcal{I}~\mbox{ for all }~\gamma\in
(\omega_t,\mathcal{I})-\Gamma_{x_t}^r.$$
We set $A=\{t\in\mathbb{N}:\omega_t>\frac{2(r+\varepsilon)}{\|\gamma_1-\gamma_2\|}\}$. Note that  $A\notin\mathcal{I}$ since  $\{\omega_t\}_{t\in\mathbb{N}}$ is not $\mathcal{I}$-bounded. Therefore, note that
\begin{eqnarray*}
	A&=&\{t\in\mathbb{N}:\omega_t\|\gamma_1-\gamma_2\|>2(r+\varepsilon)\}\\
	&\subseteq& \{t\in\mathbb{N}:\omega_t\|x_t-\gamma_1\|>r+\varepsilon\}\cup \{t\in\mathbb{N}:\omega_t\|x_t-\gamma_2\|>r+\varepsilon\}\in\mathcal{I}.
\end{eqnarray*}
This creats a contradiction, since $A\notin\mathcal{I}$. Hence the result.
\end{proof}

The converses of Propositions \ref{closedness cluster} and \ref{maximal closed cluster} are not true, as demonstrated in the following example.

	\begin{example}
	Consider the ideal $\mathcal{I}_{\delta}$ of natural density zero subsets of $\mathbb{N}$ and the normed space $B(\ell^2)$ of bounded linear operators on $\ell^2$. 
	The sequence $\{T_t\}_{t\in\mathbb{N}}$ in $B(\ell^2)$  is defined as follows:
	$$T_t(u)=
	\begin{cases}
		P_{t}(u) &~\mbox{ if }~t\in\{2n:n\in\mathbb{N}\},\\ 
		Q_t(u) &~\mbox{ if }~ t\in\{2n-1:n\in\mathbb{N}\},
	\end{cases}$$  
	where  $P_t,Q_t\in B(\ell^2)$ for each $t\in\mathbb{N}$, and for each $u=(u_1,u_2,...,u_t,...)\in\ell^2$, we have $$P_t(u)=(u_1,\frac{u_2}{2},...,\frac{u_t}{t},0,0,...)~\mbox{ and }~ Q_{t}(u)=(\underbrace{0,0,..,0}_{t~\text{zeros}},u_{t+1},u_{t+2},...).$$ 
	Next, we set the weighted sequence $\{\omega_t\}_{t\in\mathbb{N}}$ as follows:
	$$\omega_t=
	\begin{cases}
		\sqrt{t} &~\mbox{ if }~t\in\{2n:n\in\mathbb{N}\},\\
		~1 &~ \mbox{ if }~t\in\{2n-1:n\in\mathbb{N}\}.
	\end{cases}$$
Therefore,
$$\|P_{t}-P\|<\frac{1}{t+1}~\mbox{ and }~\|Q_t-0\|=1,$$
where $P:\ell^2\to\ell^2$ defined by $P(u)=(v_t)$ and $v_t=\frac{u_t}{t}$ for each $t\in\mathbb{N}$.
Let us fix $r>1$. Then we will have $(\omega_t,\mathcal{I}_{\delta})-\Gamma_{T_t}^r=\{0,P\}$. Evidently, we have $(\omega_t,\mathcal{I}_{\delta})-\Gamma_{T_t}^r$ is closed but neither the weighted sequence $\{\omega_t\}_{t\in\mathbb{N}}$ is statistically bounded nor the ideal $\mathcal{I}_{\delta}$ is maximal. \end{example}

The following theorem provides a crucial characterization of a maximal admissible ideal using the concepts of $(\omega_t,\mathcal{I})-\text{LIM}^{r}x_t$ and $(\omega_t,\mathcal{I})-\Gamma_{x_t}^r$.

\begin{theorem}\label{maximal ideal}
Let $r\geq 0$ and  $\mathcal{I}$ be an admissible ideal on $\mathbb{N}$. Then the ideal $\mathcal{I}$ is maximal if and only if $(\omega_t,\mathcal{I})-\text{LIM}^{r} x_t=(\omega_t,\mathcal{I})-\Gamma_{x_t}^r$ for every sequence $\{x_t\}_{t\in\mathbb{N}}$ in $X$.
\end{theorem}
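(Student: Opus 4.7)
My plan is to prove the two implications separately, after first isolating a general inclusion that holds without any maximality hypothesis. Observe that for any non-trivial ideal $\mathcal{I}$, $(\omega_t,\mathcal{I})-\text{LIM}^r x_t \subseteq (\omega_t,\mathcal{I})-\Gamma_{x_t}^r$: if $x_* \in (\omega_t,\mathcal{I})-\text{LIM}^r x_t$ and $\varepsilon > 0$, applying the defining condition with $\varepsilon/2$ gives $\{t : \omega_t\|x_t-x_*\| \leq r + \varepsilon/2\} \in \mathcal{F}(\mathcal{I})$, which lies outside $\mathcal{I}$ by non-triviality; since this set is contained in $\{t : \omega_t\|x_t-x_*\| < r + \varepsilon\}$, hereditariness of $\mathcal{I}$ forces the latter also to lie outside $\mathcal{I}$, so $x_* \in (\omega_t,\mathcal{I})-\Gamma_{x_t}^r$. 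Thus the theorem reduces to showing that the reverse inclusion holds for every sequence $\{x_t\}_{t\in\mathbb{N}}$ precisely when $\mathcal{I}$ is maximal.

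For the forward implication, suppose $\mathcal{I}$ is maximal admissible and pick any $\gamma \in (\omega_t,\mathcal{I})-\Gamma_{x_t}^r$. If $\gamma \notin (\omega_t,\mathcal{I})-\text{LIM}^r x_t$, there exists $\varepsilon_0 > 0$ with $A := \{t : \omega_t\|x_t - \gamma\| > r + \varepsilon_0\} \notin \mathcal{I}$. Lemma \ref{lemma maximal} then forces $\mathbb{N}\setminus A \in \mathcal{I}$, and since $\{t : \omega_t\|x_t - \gamma\| < r + \varepsilon_0\} \subseteq \mathbb{N}\setminus A$, this subset also belongs to $\mathcal{I}$, contradicting $\gamma \in (\omega_t,\mathcal{I})-\Gamma_{x_t}^r$. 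Hence $\gamma \in (\omega_t,\mathcal{I})-\text{LIM}^r x_t$.

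For the converse I argue contrapositively: if $\mathcal{I}$ is not maximal, Lemma \ref{lemma maximal} supplies a set $A \subseteq \mathbb{N}$ with both $A \notin \mathcal{I}$ and $\mathbb{N}\setminus A \notin \mathcal{I}$. Taking a non-trivial $X$, fix any $v \in X$ with $\|v\| = r + 1$, set $\omega_t \equiv 1$ (a valid weighted sequence), and define $x_t = 0$ for $t \in A$ and $x_t = v$ for $t \in \mathbb{N}\setminus A$. For every $\varepsilon > 0$ the set $\{t : \|x_t\| < r+\varepsilon\}$ contains $A$, so $0 \in (\omega_t,\mathcal{I})-\Gamma_{x_t}^r$; on the other hand $\{t : \|x_t\| > r + \tfrac{1}{2}\} = \mathbb{N}\setminus A \notin \mathcal{I}$, so $0 \notin (\omega_t,\mathcal{I})-\text{LIM}^r x_t$. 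This exhibits a sequence witnessing strict inequality of the two sets.

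I expect the proof to be largely a matter of careful bookkeeping rather than deep technique; the single real subtlety is the asymmetry between the strict inequality in the cluster-point definition and the non-strict inequality appearing after passage to complements, which is what dictates the $\varepsilon/2$ device in the general inclusion and in the maximality step. Once the dichotomy lemma is available, the counterexample in the converse is essentially forced by the two-valued nature of a splitting set.
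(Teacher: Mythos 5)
Your general inclusion and your forward implication are correct and essentially identical to the paper's argument: the paper dismisses the inclusion $(\omega_t,\mathcal{I})-\text{LIM}^{r} x_t\subseteq(\omega_t,\mathcal{I})-\Gamma_{x_t}^r$ as holding ``vacuously'', and your $\varepsilon/2$ argument is the careful version of exactly that claim; your use of the dichotomy for $\Gamma\subseteq\text{LIM}$ is the same step as the paper's, phrased by contradiction rather than directly.

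The one genuine gap is in the converse. In this paper $\{\omega_t\}_{t\in\mathbb{N}}$ is a \emph{fixed} weighted sequence given at the outset (see the Notations paragraph), and the theorem quantifies only over the sequences $\{x_t\}_{t\in\mathbb{N}}$; so to refute the equality you must exhibit a sequence $\{x_t\}$ for which the two sets differ \emph{with respect to the given} $\{\omega_t\}$. By resetting $\omega_t\equiv 1$ you only show that the equality fails for \emph{some} weighted sequence, which is weaker than the stated biconditional. Moreover your specific choice $x_t=v$ with $\|v\|=r+1$ off $A$ does not survive a general $\{\omega_t\}$: if $r>0$ and $\omega_t\equiv\beta$ with $0<\beta\le r/(r+1)$, then $\omega_t\|x_t-0\|\le r$ for every $t$, so $\{t\in\mathbb{N}:\omega_t\|x_t-0\|>r+\varepsilon\}=\varnothing$ for all $\varepsilon>0$ and $0$ lands in $(\omega_t,\mathcal{I})-\text{LIM}^{r} x_t$ after all. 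The repair is what the paper does: keep the given $\{\omega_t\}$, take unit vectors $s_t$, and set $x_t=\tfrac{r s_t}{\omega_t}$ for $t\in A$ (so $\omega_t\|x_t\|=r$ and $0\in(\omega_t,\mathcal{I})-\Gamma_{x_t}^r$ since $A\notin\mathcal{I}$) and $x_t=t s_t$ for $t\in\mathbb{N}\setminus A$ (so $\omega_t\|x_t\|\ge t\beta\to\infty$, and admissibility of $\mathcal{I}$ together with $\mathbb{N}\setminus A\notin\mathcal{I}$ gives $0\notin(\omega_t,\mathcal{I})-\text{LIM}^{r} x_t$). With that substitution your argument is complete. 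A very minor further point: Lemma~\ref{lemma maximal} as stated derives the dichotomy \emph{from} maximality, so producing a splitting set from non-maximality uses the standard converse of that lemma rather than the lemma itself; the paper makes the same silent appeal.
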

\begin{proof}
	First, assume that $\mathcal{I}$ is maximal. Observe that 
	$
	(\omega_t,\mathcal{I})-\text{LIM}^{r} x_t\subseteq(\omega_t,\mathcal{I})-\Gamma_{x_t}^r
	$ holds true vacously for every $\{x_t\}_{t\in\mathbb{N}}$ in $X$. Now pick any $\gamma\in (\omega_t,\mathcal{I})-\Gamma_{x_t}^r$. Then 
	$$
	A=\{t\in\mathbb{N}:\omega_t\|x_t-\gamma\|<r+\varepsilon\}\notin\mathcal{I}~\mbox{ for every }~\varepsilon>0.
	$$
	Since $\mathcal{I}$ is maximal, in view of Lemma \ref{lemma maximal}, we  have $\mathbb{N}\setminus A=\{t\in\mathbb{N}:\omega_t\|x_t-\gamma\|\geq r+\varepsilon\}\in\mathcal{I}$.  Since $\varepsilon>0$ was arbitrary, we get that $\gamma\in 	(\omega_t,\mathcal{I})-\text{LIM}^{r} x_t$, i.e., $(\omega_t,\mathcal{I})-\Gamma_{x_t}^r\subseteq(\omega_t,\mathcal{I})-\text{LIM}^{r} x_t$.
	Consequently, $(\omega_t,\mathcal{I})-\text{LIM}^{r} x_t=(\omega_t,\mathcal{I})-\Gamma_{x_t}^r$.
	
	Next, suppose that $(\omega_t,\mathcal{I})-\text{LIM}^{r} x_t=(\omega_t,\mathcal{I})-\Gamma_{x_t}^r$ holds true for every sequence $\{x_t\}_{t\in\mathbb{N}}$ in $X$. Assume, on the contrary, that $\mathcal{I}$
is not maximal. Then there exists $A\subseteq\mathbb{N}$ such that either $A,\mathbb{N}\setminus A\in\mathcal{I}$ or $A,\mathbb{N}\setminus A\notin\mathcal{I}$. Since $\mathcal{I}$ is admissible, $\mathcal{I}$ is non-trivial (by definition). Therefore, we must have $A,\mathbb{N}\setminus A\notin\mathcal{I}$, otherwise $\mathbb{N}\in\mathcal{I}$.
Let $\{s_t\}_{t\in\mathbb{N}}$ be a sequence in $X$ such that $\|s_t\|=1$ for each $t\in\mathbb{N}$. Let us define
$$x_t=
\begin{cases}
	\frac{rs_t}{\omega_t} &~\mbox{ if }~t\in A,\\
	ts_t &~ \mbox{ if }~t\in\mathbb{N}\setminus A.
\end{cases}$$
From the construction of $\{x_t\}_{t\in\mathbb{N}}$, we have $0\in (\omega_t,\mathcal{I})-\Gamma_{x_t}^r$ but $0\notin (\omega_t,\mathcal{I})-\text{LIM}^{r} x_t$, which is a contradiction. Thus we can conclude that $\mathcal{I}$ is a maximal ideal.
\end{proof}
\begin{corollary}
There exists a sequence $\{x_t\}_{t\in\mathbb{N}}$ in $X$ such that 
$$
(\omega_t,\mathcal{I}_{\delta})-\text{LIM}^{r} x_t\subsetneq(\omega_t,\mathcal{I}_{\delta})-\Gamma_{x_t}^r.
$$
\end{corollary}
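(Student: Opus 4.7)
The plan is to derive this corollary as a direct consequence of Theorem~\ref{maximal ideal} together with the observation that $\mathcal{I}_\delta$ fails to be a maximal ideal. Since the inclusion $(\omega_t,\mathcal{I}_\delta)-\text{LIM}^{r}x_t \subseteq (\omega_t,\mathcal{I}_\delta)-\Gamma_{x_t}^r$ always holds, I only need to exhibit a sequence for which the inclusion is strict.

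First I would verify the non-maximality of $\mathcal{I}_\delta$ concretely: take $A=\{2t:t\in\mathbb{N}\}$, so $\delta(A)=1/2$, which shows $A\notin\mathcal{I}_\delta$ and $\mathbb{N}\setminus A\notin\mathcal{I}_\delta$. By Lemma~\ref{lemma maximal}, this precludes maximality. Next, I would simply invoke the contrapositive of Theorem~\ref{maximal ideal}: since $\mathcal{I}_\delta$ is admissible but not maximal, there must exist some sequence $\{x_t\}_{t\in\mathbb{N}}$ in $X$ for which $(\omega_t,\mathcal{I}_\delta)-\text{LIM}^{r}x_t \neq (\omega_t,\mathcal{I}_\delta)-\Gamma_{x_t}^r$, and combined with the trivial inclusion this yields strict containment.

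If a fully explicit witness is desired (and I would include one for concreteness), I would reproduce the construction used inside the proof of Theorem~\ref{maximal ideal}. Fix any sequence $\{s_t\}_{t\in\mathbb{N}}\subseteq X$ with $\|s_t\|=1$ for all $t$, and define
\begin{equation*}
x_t=\begin{cases}\dfrac{r s_t}{\omega_t}, & t\in A,\\[4pt] t s_t, & t\in\mathbb{N}\setminus A.\end{cases}
\end{equation*}
For every $\varepsilon>0$, the set $\{t\in\mathbb{N}:\omega_t\|x_t-0\|<r+\varepsilon\}$ contains $A$ (since $\omega_t\|x_t\|=r$ on $A$), hence is not in $\mathcal{I}_\delta$, giving $0\in(\omega_t,\mathcal{I}_\delta)-\Gamma_{x_t}^r$. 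On the other hand, on $\mathbb{N}\setminus A$ we have $\omega_t\|x_t\|=t\omega_t\to\infty$, so $\{t\in\mathbb{N}:\omega_t\|x_t-0\|>r+\varepsilon\}\supseteq\mathbb{N}\setminus A$, which is not in $\mathcal{I}_\delta$; therefore $0\notin(\omega_t,\mathcal{I}_\delta)-\text{LIM}^{r}x_t$.

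There is no real obstacle here—the corollary is essentially a restatement of the non-trivial direction of Theorem~\ref{maximal ideal} applied to a specific non-maximal ideal. The only point requiring care is ensuring that the witness set $A$ indeed demonstrates non-maximality of $\mathcal{I}_\delta$, which is immediate from the density computation for the even numbers.
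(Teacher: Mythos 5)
Your proposal is correct and follows essentially the same route as the paper, which likewise notes the trivial inclusion and then invokes the non-maximality of $\mathcal{I}_{\delta}$ together with Theorem \ref{maximal ideal}. Your explicit witness (reproducing the construction from the theorem's proof with $A$ the even numbers) is a harmless addition the paper omits; the only nitpick is that the set $\{t:\omega_t\|x_t\|>r+\varepsilon\}$ contains a cofinite subset of $\mathbb{N}\setminus A$ rather than all of it, which does not affect the conclusion.
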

\begin{proof}
	Note that 
	$(\omega_t,\mathcal{I}_{\delta})-\text{LIM}^{r} x_t\subseteq(\omega_t,\mathcal{I}_{\delta})-\Gamma_{x_t}^r$ always holds true for every $\{x_t\}_{t\in\mathbb{N}}$ in $X$.
	Since $\mathcal{I}_{\delta}$ is not a maximal ideal, the result follows from Theorem \ref{maximal ideal}.
\end{proof}
As previously noted  whenever $\{\omega_t\}_{t\in\mathbb{N}}$ is $\mathcal{I}$-bounded, $(\omega_t,\mathcal{I})-\Gamma_{x_t}^r$ is closed (see Proposition \ref{closedness cluster}).
In this context, we offer a representation of closed sets in separable normed spaces in relation to $(\omega_t, \mathcal{I})-\Gamma_{x_t}^r$.
\begin{theorem}\label{closed set}
Let $X$ be a separable normed space and $\{\omega_t\}_{t\in\mathbb{N}}$ be $\mathcal{I}$-bounded. Suppose there exists a disjoint sequence $\{A_t\}_{t\in\mathbb{N}}$ of sets such that $A_t\subset\mathbb{N}$ and $A_t\notin\mathcal{I}$ for any $t\in\mathbb{N}$. Then for every non-empty closed set $F\subseteq X$ there exists a sequence $\{x_t\}_{t\in\mathbb{N}}$ in $X$ such that 
$$
F=\bigcap_{r>0}(\omega_t,\mathcal{I})-\Gamma_{x_t}^r.
$$
\end{theorem}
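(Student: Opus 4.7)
The plan is to exploit separability of $X$ together with the disjoint family $\{A_t\}_{t\in\mathbb{N}}$ in order to construct a sequence $\{x_t\}$ whose terms all lie inside $F$ and whose set of rough $(\omega_t,\mathcal{I})$-cluster points, for every arbitrarily small $r>0$, exhausts a chosen dense subset of $F$. Since $X$ is separable, so is $F$; fix an enumeration $\{y_j\}_{j\in\mathbb{N}}$ dense in $F$ (repeat elements if $F$ is finite). By $\mathcal{I}$-boundedness of $\{\omega_t\}$ choose $\mu>0$ with $B:=\{t\in\mathbb{N}:\omega_t<\mu\}\in\mathcal{F}(\mathcal{I})$, and define
\begin{equation*}
x_t=
\begin{cases}
y_j,&\text{if }t\in A_j\text{ for some (unique) }j,\\
y_1,&\text{if }t\in\mathbb{N}\setminus\bigcup_{j\in\mathbb{N}}A_j.
\end{cases}
\end{equation*}
For every $j$, the set $A_j\cap B$ is not in $\mathcal{I}$: otherwise $A_j=(A_j\cap B)\cup(A_j\setminus B)$ would exhibit $A_j$ as the union of two $\mathcal{I}$-members, contradicting $A_j\notin\mathcal{I}$.

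For the inclusion $F\subseteq\bigcap_{r>0}(\omega_t,\mathcal{I})-\Gamma_{x_t}^{r}$, fix $y\in F$, $r>0$, and $\varepsilon>0$, and use density to pick $j$ with $\|y_j-y\|<\varepsilon/\mu$. For every $t\in A_j\cap B$ we have $x_t=y_j$ and $\omega_t<\mu$, so
\begin{equation*}
\omega_t\|x_t-y\|=\omega_t\|y_j-y\|<\mu\cdot\tfrac{\varepsilon}{\mu}=\varepsilon<r+\varepsilon.
\end{equation*}
Thus $\{t\in\mathbb{N}:\omega_t\|x_t-y\|<r+\varepsilon\}\supseteq A_j\cap B\notin\mathcal{I}$, which by Definition \ref{Def 2.2} places $y$ in $(\omega_t,\mathcal{I})-\Gamma_{x_t}^{r}$ for every $r>0$.

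For the reverse inclusion, suppose $y\notin F$ and set $\delta:=d(y,F)>0$. By construction $x_t\in F$ for every $t$, and the defining constant of the weighted sequence gives $\beta>0$ with $\omega_t>\beta$ for all $t$; hence $\omega_t\|x_t-y\|\geq\beta\delta$ uniformly in $t$. Choosing $r:=\beta\delta/4$ and $\varepsilon:=\beta\delta/4$ yields
\begin{equation*}
\{t\in\mathbb{N}:\omega_t\|x_t-y\|<r+\varepsilon\}=\{t\in\mathbb{N}:\omega_t\|x_t-y\|<\tfrac{\beta\delta}{2}\}=\varnothing\in\mathcal{I},
\end{equation*}
so $y\notin(\omega_t,\mathcal{I})-\Gamma_{x_t}^{r}$ for this particular $r>0$, and therefore $y\notin\bigcap_{r>0}(\omega_t,\mathcal{I})-\Gamma_{x_t}^{r}$.

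The main subtlety lies in simultaneously (i) embedding a dense countable subset of $F$ into $\{x_t\}$ along index sets that remain $\mathcal{I}$-non-thin even after intersecting with $B$, and (ii) keeping every $x_t$ inside $F$ so that no point outside $F$ can survive as a cluster point for arbitrarily small roughness degrees. The disjointness of $\{A_t\}$ is exactly what makes requirement (i) possible for countably many dense approximants at once, while the $\mathcal{I}$-boundedness of $\{\omega_t\}$ is precisely what converts the density estimate $\|y_j-y\|<\varepsilon/\mu$ into uniform control on $\omega_t\|x_t-y\|$ over a set in $\mathcal{F}(\mathcal{I})$.
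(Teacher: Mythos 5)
Your proof is correct and follows essentially the same route as the paper: the same construction $x_t=y_j$ on $A_j$, the same use of $\mathcal{I}$-boundedness to pass from the density estimate to a non-$\mathcal{I}$ set of indices via $A_j\cap B$. The only cosmetic difference is in the reverse inclusion, where you argue by contrapositive using $d(y,F)>0$ and the lower bound $\omega_t>\beta$, whereas the paper directly shows that any point in the intersection lies in the closure of the dense set; the two arguments are equivalent.
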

\begin{proof}
Since $X$ is a separable and subspace of a separable metric space is separable, there exists a countable set $A=\{a_1,a_2,...,a_k,..\}\subset F$ such that $\bar{A}=F$.  If $\bigcup_{i=1}^{\infty}A_i=\mathbb{N}$, we define
$$
x_{t}=
a_i ~\mbox{ if }~t\in A_i.
$$
Otherwise, we define
$$x_t=
\begin{cases}
	a_i &~\mbox{ if }~t\in A_i,\\
	a_1 &~ \mbox{ if }~t\in\mathbb{N}\setminus \bigcup_{i=1}^{\infty}A_i.
\end{cases}$$
Since $\{\omega_t\}_{t\in\mathbb{N}}$ is a weighted sequence, there exists $\beta>0$ such that $\omega_t>\beta$ for all $t\in\mathbb{N}$. Additionally, since the sequence $\{\omega_t\}_{t\in\mathbb{N}}$ is $\mathcal{I}$-bounded, there exists  $\mu > 0$ 
such that 
$$
C=\{t\in\mathbb{N}:\omega_t<\mu\}\in \mathcal{F}(\mathcal{I}).
$$  
Now, choose an $\varepsilon>0$.
First, assume that $z\in F$. Then there exists $a_i\in A$ such that $\|z-a_i\|<\frac{\varepsilon}{2\mu}$. Now, for all $r>0$, observe that 
$$
\{t\in\mathbb{N}:\omega_t\|x_t-a_i\|<r+\frac{\varepsilon}{2}\}\cap C\subseteq \{t\in\mathbb{N}:\omega_t\|x_t-z\|<r+\varepsilon\}.
$$
Note that $\{t\in\mathbb{N}:\omega_t\|x_t-a_i\|<r+\frac{\varepsilon}{2}\}\supseteq A_i$. Since $A_i\notin\mathcal{I}$ and $C\in \mathcal{F}(\mathcal{I})$,
we obtain $\{t\in\mathbb{N}:\omega_t\|x_t-a_i\|<r+\frac{\varepsilon}{2}\}\cap C\notin\mathcal{I}$. This implies $\{t\in\mathbb{N}:\omega_t\|x_t-z\|<r+\varepsilon\}\notin \mathcal{I}$. This yields $z\in \displaystyle\bigcap_{r>0}(\omega_t,\mathcal{I})-\Gamma_{x_t}^r$, i.e., $F\subseteq \displaystyle\bigcap_{r>0}(\omega_t,\mathcal{I})-\Gamma_{x_t}^r$.\\
Next, assume that $z\in \displaystyle\bigcap_{r>0}(\omega_t,\mathcal{I})-\Gamma_{x_t}^r$. 
 Then 
$$
\{t\in\mathbb{N}:\omega_t\|x_t-z\|<r+\frac{\beta\varepsilon}{2}\}\notin\mathcal{I}~\mbox{ for every }~r>0.
$$
In particular, for $r=\frac{\beta\varepsilon}{2}>0$, we have 
$$
B=\{t\in\mathbb{N}:\omega_t\|x_t-z\|<\beta\varepsilon\}\notin\mathcal{I}.
$$
Since $B\notin\mathcal{I}$, we get that $B\neq\varnothing$. So, choose any $j\in B$. Therefore, we have 
\begin{align*}
	\beta\|x_j-z\|&<\omega_j\|x_j-z\|<\beta\varepsilon\\
	&\Rightarrow \|x_j-z\|<\varepsilon.
\end{align*}
Since $x_j=a_i$ for some $i\in\mathbb{N}$ and $\varepsilon>0$ was arbitrary, we conclude that $z\in \bar{A}=F$, i.e.,  $\displaystyle\bigcap_{r>0}(\omega_t,\mathcal{I})-\Gamma_{x_t}^r\subseteq F$. Thus we deduce that
  $
  F=\displaystyle\bigcap_{r>0}(\omega_t,\mathcal{I})-\Gamma_{x_t}^r.
  $
\end{proof}
\begin{corollary}
	If $X$ is separable and $\{\omega_t\}_{t\in\mathbb{N}}$ is statistically bounded then for each non-empty closed set $F\subseteq X$ there exists a sequence $\{x_t\}_{t\in\mathbb{N}}$ in $X$ such that 
	$$
	F=\bigcap_{r>0}(\omega_t,\mathcal{I}_{\delta})-\Gamma_{x_t}^r.
	$$
\end{corollary}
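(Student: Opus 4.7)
The plan is to obtain the corollary as an immediate specialization of Theorem~\ref{closed set} with $\mathcal{I} = \mathcal{I}_{\delta}$. Only two hypotheses of that theorem need to be checked: that $\{\omega_t\}_{t\in\mathbb{N}}$ is $\mathcal{I}_{\delta}$-bounded, and that there exists a disjoint sequence $\{A_t\}_{t\in\mathbb{N}}$ of subsets of $\mathbb{N}$, each lying outside $\mathcal{I}_{\delta}$.

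The first requirement is automatic, since by definition statistical boundedness of $\{\omega_t\}_{t\in\mathbb{N}}$ is exactly $\mathcal{I}_{\delta}$-boundedness. So the only genuine step is to produce the required disjoint family in $\mathbb{N}\setminus\mathcal{I}_{\delta}$.

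For this, I would reuse the dyadic partition already employed in Example~\ref{ex 2}. Set
$$A_j = \{2^{j-1}(2t-1) : t \in \mathbb{N}\}, \qquad j \in \mathbb{N}.$$
A short verification shows that $\{A_j\}_{j\in\mathbb{N}}$ consists of pairwise disjoint sets whose union is $\mathbb{N}$, because every positive integer has a unique representation of the form $2^{j-1}(2t-1)$. Moreover, the natural density of $A_j$ equals $2^{-j}$, which is strictly positive, so $A_j \notin \mathcal{I}_{\delta}$ for every $j \in \mathbb{N}$.

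With both hypotheses in hand, I would then apply Theorem~\ref{closed set} with $\mathcal{I} = \mathcal{I}_{\delta}$ and the partition $\{A_t\}_{t\in\mathbb{N}}$ above: for each non-empty closed set $F \subseteq X$ this yields a sequence $\{x_t\}_{t\in\mathbb{N}}$ in $X$ satisfying
$$F = \bigcap_{r>0} (\omega_t, \mathcal{I}_{\delta}) - \Gamma_{x_t}^r,$$
which is the desired conclusion. Since the argument reduces to checking two hypotheses and quoting the preceding theorem, there is no substantive obstacle; the only point worth a sentence is the density computation for the $A_j$, which is entirely routine.
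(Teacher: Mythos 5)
Your proposal is correct and matches the paper's own proof essentially verbatim: the paper also verifies the hypotheses of Theorem~\ref{closed set} by invoking the dyadic sets $A_j=\{2^{j-1}(2t-1):t\in\mathbb{N}\}$, which are pairwise disjoint and have positive natural density, and then applies the theorem with $\mathcal{I}=\mathcal{I}_{\delta}$. Nothing further is needed.
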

\begin{proof}
 Since $A_{j}=\{2^{j-1}(2t-1):t\in\mathbb{N}\}\notin\mathcal{I}_{\delta}$ and $A_i\cap A_j=\varnothing$ if $i\neq j$, the result directly follows from Theorem \ref{closed set}. 
\end{proof}
We conclude this section with  a fascinating  characterization of the set $(\omega_t,\mathcal{I})-\Gamma_{x_t}^r$.
\begin{proposition}
Suppose $\mathcal{I}$ is a non-trivial ideal on $\mathbb{N}$ and $\{x_t\}_{t\in\mathbb{N}}$ is a sequence in $X$. We set $A_{\ell}=\{\omega_t(x_t-\ell):t\in A\}$ where $\ell\in X$ and $A\in\mathcal{F}(\mathcal{I})$.
\begin{itemize}
 \item[(\textbf{a1})] Then $\ell\in (\omega_t,\mathcal{I})-\Gamma_{x_t}^r$ for all $r>0$ if and only if $0\in \bar{A_{\ell}}$ for all $A\in\mathcal{F}(\mathcal{I})$.
\item[(\textbf{a2})] If   $\ell\in (\omega_t,\mathcal{I})-\Gamma_{x_t}^r$ for some $r>0$ then $A_{\ell}\cap r'B_X\neq\varnothing$ for all  $A\in\mathcal{F}(\mathcal{I})$ where $r'>r$.
\end{itemize}
\begin{proof}
(\textbf{a1}) First, suppose that  $\ell\in (\omega_t,\mathcal{I})-\Gamma_{x_t}^r$ for all $r>0$. Let $\varepsilon>0$ be arbitrary. Then, 
$$
\{t\in\mathbb{N}:\omega_t\|x_t-\ell\|<r+\frac{\varepsilon}{2}\}\notin\mathcal{I}~\mbox{ for all }~r>0.
$$
Thus, for $r=\frac{\varepsilon}{2}>0$, we have $B=\{t\in\mathbb{N}:\omega_t\|x_t-\ell\|<\varepsilon\}\notin\mathcal{I}$. Let $A\in\mathcal{F}(\mathcal{I})$. Since $B\notin\mathcal{I}$, we have $A\cap B\neq\varnothing$. Therefore, there exists $t\in A$ such that $\omega_t\|x_t-\ell\|<\varepsilon$. This ensures that  $0\in \bar{A_{\ell}}$.

Conversely, let $0\in \bar{A_{\ell}}$ for all $A\in\mathcal{F}(\mathcal{I})$. Assume, on the contrary, that $\ell\notin (\omega_t,\mathcal{I})-\Gamma_{x_t}^r$ for some $r>0$. Then there exists $\varepsilon_*>0$ such that $\{t\in\mathbb{N}:\omega_t\|x_t-\ell\|\leq r+\varepsilon_*\}\in\mathcal{I}$. Consequently, 
$$
A=\{t\in\mathbb{N}:\omega_t\|x_t-\ell\|> r+\varepsilon_*\}\in\mathcal{F}(\mathcal{I}).
$$
With this $A\in\mathcal{F}(\mathcal{I})$, we conclude that $A_{\ell}=\{\omega_t(x_t-\ell):t\in A\}\subset X$ is such that 
$$
A_{\ell}\subseteq X\setminus (r+ \frac{\varepsilon_*}{2})B_X.
$$
This contradicts that $0\notin \bar{A_{\ell}}$. Therefore, we need to have $\ell\in (\omega_t,\mathcal{I})-\Gamma_{x_t}^r$ for all $r>0$.\\
(\textbf{a2}) Let $\ell\in (\omega_t,\mathcal{I})-\Gamma_{x_t}^r$. Since $r'>r$, there exists $\varepsilon>0$ such that $r+\varepsilon\leq r'$. Then $B=\{t\in\mathbb{N}:\omega_t\|x_t-\ell\|<r+\varepsilon\}\notin\mathcal{I}$. Now, choose any $A\in\mathcal{F}(\mathcal{I})$. Since $B\notin\mathcal{I}$, we have $A\cap B\neq\varnothing$. Pick any $t\in A\cap B$. Then we have  
$$
\omega_t\|x_t-\ell\|\leq r'~\mbox{ and }~t\in A.
$$
This ensures that $A_{\ell}\cap r'B_X\neq\varnothing$ for any $A\in\mathcal{F}(\mathcal{I})$.
\end{proof}
\end{proposition}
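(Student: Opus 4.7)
The plan is to translate membership in $(\omega_t,\mathcal{I})-\Gamma_{x_t}^r$ directly into set-intersection statements, and then exploit the filter-ideal duality $A \in \mathcal{F}(\mathcal{I}) \iff \mathbb{N}\setminus A \in \mathcal{I}$: whenever a set $B$ is not in $\mathcal{I}$ and $A\in \mathcal{F}(\mathcal{I})$, the intersection $A\cap B$ must be non-empty, because otherwise $B\subseteq \mathbb{N}\setminus A \in \mathcal{I}$. This simple observation is the engine behind every part.

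For the forward direction of \textbf{(a1)}, I would fix an arbitrary $\varepsilon>0$ and an arbitrary $A\in \mathcal{F}(\mathcal{I})$. Since $\ell$ is assumed to be in $(\omega_t,\mathcal{I})-\Gamma_{x_t}^r$ for every $r>0$, I can specialize the hypothesis to $r=\varepsilon/2$ (this is exactly where the "for all $r>0$" strength is used) to conclude that $B:=\{t\in\mathbb{N}:\omega_t\|x_t-\ell\|<\varepsilon\}\notin \mathcal{I}$. The duality above yields $A\cap B\neq \varnothing$, and any witness $t\in A\cap B$ contributes a point $\omega_t(x_t-\ell)\in A_{\ell}$ of norm less than $\varepsilon$, so $0\in \overline{A_{\ell}}$.

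For the converse of \textbf{(a1)}, I would argue by contrapositive. If $\ell\notin (\omega_t,\mathcal{I})-\Gamma_{x_t}^r$ for some $r>0$, there is $\varepsilon_*>0$ such that $\{t:\omega_t\|x_t-\ell\|<r+\varepsilon_*\}\in \mathcal{I}$; then its complement $A:=\{t:\omega_t\|x_t-\ell\|\geq r+\varepsilon_*\}$ sits in $\mathcal{F}(\mathcal{I})$, and by construction every element of $A_{\ell}$ has norm at least $r+\varepsilon_*>0$, so $0\notin \overline{A_{\ell}}$ for this particular $A$. Part \textbf{(a2)} follows the same template: pick $\varepsilon>0$ small enough that $r+\varepsilon\leq r'$; the hypothesis gives $B:=\{t:\omega_t\|x_t-\ell\|<r+\varepsilon\}\notin \mathcal{I}$; for any $A\in \mathcal{F}(\mathcal{I})$ the intersection $A\cap B$ is non-empty, and any $t$ in this intersection places $\omega_t(x_t-\ell)$ simultaneously in $A_{\ell}$ and in $r'B_X$.

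The argument is essentially definition-chasing together with the filter-ideal intersection principle; there is no real obstacle. The only point requiring modest attention is making the quantifiers line up in \textbf{(a1)}: the strong hypothesis "$\ell\in (\omega_t,\mathcal{I})-\Gamma_{x_t}^r$ for \emph{all} $r>0$" must be used to produce a statement independent of $r$, which is achieved by letting $r\to 0^+$ (concretely, by choosing $r=\varepsilon/2$ after $\varepsilon$ is fixed). This is precisely why \textbf{(a2)} can only deliver $r'B_X$ with $r'>r$, rather than $0\in \overline{A_{\ell}}$.
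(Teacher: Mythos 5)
Your proposal is correct and follows essentially the same route as the paper's proof: specializing $r=\varepsilon/2$ in the forward direction of (a1), taking the complement of the small ideal set as the witnessing $A\in\mathcal{F}(\mathcal{I})$ in the converse, and using the intersection principle $A\cap B\neq\varnothing$ for $A\in\mathcal{F}(\mathcal{I})$, $B\notin\mathcal{I}$ throughout, including in (a2). No gaps.
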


\section {Application to Korovkin type approximations via weighted equi-ideal convergence.}\label{section 5}

We begin this section by recalling the following fundamental definitions related to 
$\mathcal{I}$-convergence for sequences of functions, which will serve as the basis for our subsequent discussions and results.

\begin{definition}\cite[Section 2]{balcerzak2007statistical}
Let $\mathcal{I}$ be an admissible ideal and $\{f_t\}_{t\in\mathbb{N}}$ be a sequence of functions in $C(K)$. Then $\{f_t\}_{t\in\mathbb{N}}$ is said to be 
\begin{itemize}
\item[(i)] $\mathcal{I}$-pointwise convergent to $f\in C(K)$ if 
$$
(\forall x\in K)~(\forall \varepsilon>0)~\Rightarrow \{t\in\mathbb{N}:|f_t(x)-f(x)|> \varepsilon\}\in\mathcal{I}.
$$
\item [(ii)]$\mathcal{I}$-uniform convergent to $f\in C(K)$ if for every $\varepsilon>0$
$$
\{k\in\mathbb{N}:\|f_t-f\|_{C(K)}>\varepsilon\}\in\mathcal{I}.
$$
\end{itemize}
Additionally, for $\mathcal{I} = \mathcal{I}_\delta$, a new type of convergence called equi-statistical convergence, was introduced. This mode of convergence lies between $\mathcal{I}_\delta$-pointwise convergence and $\mathcal{I}_\delta$-uniform convergence, and is defined as follows:

A sequence $\{f_t\}_{t \in \mathbb{N}}$ is said to be equi-statistically convergent to $f \in C(K)$ if, for every $\varepsilon > 0$, the sequence $\{g_{j,\varepsilon}\}_{j \in \mathbb{N}}$ of functions $g_{j,\varepsilon} : K \to \mathbb{R}$, defined by
\begin{equation*}
g_{j,\varepsilon}(x) = \frac{|\{t \in \mathbb{N} : |f_t(x) - f(x)| > \varepsilon\} \cap [1,j]|}{j}, \quad x \in K,
\end{equation*}
converges uniformly to the zero function on $K$.
\end{definition}

It has been shown that the ideal $\mathcal{I}_{\delta}$ is an analytic $P$-ideal as it can be represented as $\mathcal{I}_{\delta}=Exh (\varphi)$ (see \cite[Proposition 1.1]{balcerzak2015generalized} with $g(t)=t$), where
$$
\varphi(A)=\sup_{n\in\mathbb{N}}\frac{|A\cap[1,t]|}{t}~\quad \mbox{ for all }~A\subseteq \mathbb{N}.
$$
Similarly, for a given weighted sequence $\{\omega_t\}_{t\in\mathbb{N}}$, it can be deduced that the ideal $\mathcal{I}_{\mathcal{W}\delta} = \{A \subset \mathbb{N} : \mathcal{W}\delta(A) = 0\}$, consisting of sets of weighted density zero, forms an analytic $P$-ideal, with $\mathcal{I}_{\mathcal{W}\delta} = Exh(\varphi)$, where
$$
\varphi(A)=\sup_{t\in\mathbb{N}}\frac{|A\cap[1,\theta_t]|}{\theta_t}~\quad \mbox{ for all }~A\subseteq \mathbb{N}.
$$
We now demonstrate that, for such specific choices of $\varphi$, weighted equi-$\mathcal{I}_{\varphi}$ convergence coincides with either equi-statistical convergence or weighted equi-statistical convergence, thereby reinforcing the validity of our proposed notion in Definition~\ref{defIequi}..
\begin{proposition}\label{procoincidence}
Let $\{\omega_t\}_{t\in\mathbb{N}}$ be a weighted sequence and $\varphi:\mathcal{P}(\mathbb{N})\to [0,\infty)$ be such that 
$$
\varphi(A)=\sup_{n\in\mathbb{N}}\frac{|A\cap[1,\theta_t]|}{\theta_t}~\quad \mbox{ for all }~A\subseteq \mathbb{N}.
$$
Then $\varphi$ is a submeasure on $\mathbb{N}$ and weighted equi-$\mathcal{I}_{\varphi}$ convergence coincides with weighted equi-statistical convergence. In particular,  this corresponds to equi-statistical convergence when $\omega_t=1$ for each $t\in\mathbb{N}$.
\end{proposition}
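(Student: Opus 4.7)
The plan is to verify two things: that $\varphi$ is a (lower semicontinuous) submeasure on $\mathbb{N}$, and that the two convergence modes coincide. The proof divides naturally along these lines.

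\textbf{Submeasure properties.} The defining axioms are immediate: $\varphi(\varnothing)=0$ and monotonicity are trivial; subadditivity follows from $|(A\cup B)\cap[1,\theta_t]|\le |A\cap[1,\theta_t]|+|B\cap[1,\theta_t]|$ inserted inside the supremum; finally $\varphi(\{n\})<\infty$ since $\omega_i>\beta$ forces $\theta_k\ge k\beta\to\infty$, so $\varphi(\{n\})$ is bounded by $1/\theta_{k_0}$ with $k_0=\min\{k:\theta_k\ge n\}$. Lower semicontinuity $\varphi(A)=\lim_{m\to\infty}\varphi(A\cap[1,m])$ follows from monotonicity (giving ``$\le$'') together with the observation that, for any fixed $t$, once $m\ge\theta_t$ the quotient $|A\cap[1,\theta_t]|/\theta_t$ already appears in the supremum defining $\varphi(A\cap[1,m])$ (giving ``$\ge$'').

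\textbf{Equivalence of the two convergence modes.} Set $A_\varepsilon(x):=\{t\in\mathbb{N}:\omega_t|f_t(x)-f(x)|>\varepsilon\}$, so that
\[
g_{j,\varepsilon}(x)=\frac{|A_\varepsilon(x)\cap[1,\theta_j]|}{\theta_j},\qquad h_{j,\varepsilon}(x)=\sup_{t:\,\theta_t>j}\frac{|A_\varepsilon(x)\cap(j,\theta_t]|}{\theta_t}.
\]
For the direction ``weighted equi-statistical $\Rightarrow$ weighted equi-$\mathcal{I}_\varphi$'', fix $\eta>0$ and choose $J$ with $g_{j,\varepsilon}(x)<\eta$ for all $j\ge J$ and $x\in K$. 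For $j'\ge\theta_J$, strict monotonicity of $\{\theta_t\}$ (it increases by $\omega_{t+1}>\beta$) yields that $\theta_t>j'$ forces $t>J$, hence
\[
h_{j',\varepsilon}(x)\le \sup_{t>J}\frac{|A_\varepsilon(x)\cap[1,\theta_t]|}{\theta_t}=\sup_{t>J}g_{t,\varepsilon}(x)\le\eta
\]
uniformly in $x\in K$.

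\textbf{The harder direction.} For ``weighted equi-$\mathcal{I}_\varphi$ $\Rightarrow$ weighted equi-statistical'', I would argue contrapositively. Assuming $g_{j,\varepsilon}\not\to 0$ uniformly, pick $\eta\in(0,1)$, $j_k\uparrow\infty$, and $x_k\in K$ with $g_{j_k,\varepsilon}(x_k)\ge\eta$, and set the cutoff $j_k':=\lfloor \eta\theta_{j_k}/2\rfloor$. Then $j_k'\to\infty$ (since $\theta_{j_k}\ge j_k\beta\to\infty$) and $j_k'<\theta_{j_k}$, so specializing the $t=j_k$ term in the sup defining $h_{j_k',\varepsilon}(x_k)$ gives
\[
h_{j_k',\varepsilon}(x_k)\ge\frac{|A_\varepsilon(x_k)\cap(j_k',\theta_{j_k}]|}{\theta_{j_k}}\ge g_{j_k,\varepsilon}(x_k)-\frac{j_k'}{\theta_{j_k}}\ge\eta-\tfrac{\eta}{2}=\tfrac{\eta}{2},
\]
contradicting uniform convergence of $h_{\cdot,\varepsilon}$ to $0$. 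The main obstacle lies precisely in this balancing act: the cutoff $j_k'$ must simultaneously tend to infinity (to trigger the uniform convergence of $h_{j,\varepsilon}$) and remain a bounded fraction of $\theta_{j_k}$ (so that deleting $[1,j_k']$ does not erase the mass of $A_\varepsilon(x_k)\cap[1,\theta_{j_k}]$); choosing $j_k'=\lfloor \eta\theta_{j_k}/2\rfloor$ achieves both. The ``in particular'' clause is then automatic: $\omega_t\equiv 1$ forces $\theta_j=j$, under which $g_{j,\varepsilon}$ is precisely the defining sequence of equi-statistical convergence.
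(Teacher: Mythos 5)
Your proof is correct. The submeasure verification (which the paper dismisses as straightforward) and the first implication are essentially the paper's argument: in both cases one observes that for $j'\geq\theta_J$ the supremum defining $h_{j',\varepsilon}(x)$ ranges only over indices $t>J$, whence $h_{j',\varepsilon}(x)\leq\sup_{t>J}g_{t,\varepsilon}(x)$. Where you genuinely diverge is the converse implication. The paper argues directly: it fixes the cutoff $j_0$ supplied by the hypothesis $h_{j_0,\varepsilon}<\sigma/2$, splits $|A_\varepsilon(x)\cap[1,\theta_n]|$ into the part in $[1,j_0]$ (at most $j_0$ points, hence contributing less than $\sigma/2$ once $\theta_n$ is large) and the tail part (bounded by $h_{j_0,\varepsilon}(x)$), giving $g_{n,\varepsilon}<\sigma$ uniformly. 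You instead argue contrapositively with a cutoff $j_k'=\lfloor\eta\theta_{j_k}/2\rfloor$ that scales with $\theta_{j_k}$, so that it tends to infinity while deleting at most half the mass of $A_\varepsilon(x_k)\cap[1,\theta_{j_k}]$. Both arguments are sound and of comparable length; the paper's direct version yields the explicit quantitative bound $g_{n,\varepsilon}\leq \sigma/2+h_{j_0,\varepsilon}$, while yours isolates more transparently the one real constraint in this direction, namely that the cutoff must grow without outpacing $\theta_{j_k}$. Your handling of the degenerate issues (taking $\eta<1$ so that $j_k'<\theta_{j_k}$, and noting $j_k'\to\infty$ via $\theta_{j_k}\geq j_k\beta$) is complete.
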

\begin{proof}
	It is straightforward to verify that $\varphi$ defines a submeasure on $\mathbb{N}$. For this submeasure $\varphi$, we establish the equivalence between weighted equi-$\mathcal{I}_{\varphi}$ convergence and weighted equi-statistical convergence. 
 First, suppose that  $\{f_t\}_{t \in \mathbb{N}}$ is  weighted equi-statistically convergent to $f \in C(K)$. Therefore, we obtain 
$$
(\forall \sigma>0)~(\exists j_0\in\mathbb{N})~(\forall j\geq j_0)~(\forall x\in K)\;\Rightarrow ~g_{j,\varepsilon}(x)<\sigma,
$$
where 
\begin{equation*}
	g_{j,\varepsilon}(x)=\frac{|\{t\leq \theta_j:\omega_t|f_t(x)-f(x)|> \varepsilon\}|}{\theta_j},~x\in K
\end{equation*}
Now $(\forall j\geq j_0)~(\forall x\in K)$, we have 
\begin{align*}
h_{j,\varepsilon}(x)&=\varphi(\{t\in\mathbb{N}:\omega_t|f_t(x)-f(x)|> \varepsilon\}\setminus[1,j])\\
&=\sup_{n>j}\frac{|(\{t\in\mathbb{N}:\omega_t|f_t(x)-f(x)|> \varepsilon\}\setminus[1,j])\cap [1,n]|}{n}\\
&\leq \sup_{n>j}\frac{|\{t\in\mathbb{N}:\omega_t|f_t(x)-f(x)|> \varepsilon\}\cap [1,n]|}{n}\\
&=\sup_{n>j}g_{n,\varepsilon}(x)\leq\sigma~\mbox{ (since }~n\geq j_0).
\end{align*}
This ensures that, for any $\varepsilon>0$, the sequence of functions $\{h_{j,\varepsilon}\}_{j\in\mathbb{N}}$ converges uniformly to $0$, i.e., $\{f_t\}_{t \in \mathbb{N}}$ is  weighted equi-$\mathcal{I}_\varphi$ convergent to $f$.

Next, assume that $\{f_t\}_{t \in \mathbb{N}}$ is  weighted equi-$\mathcal{I}_\varphi$ convergent to $f$. Fix any $\sigma>0$. Since  $\{h_{j,\varepsilon}\}_{j\in\mathbb{N}}$ converges uniformly to $0$, $(\exists j_0\in\mathbb{N})~(\forall x\in K)$ such that 
$$
h_{j_0,\varepsilon}(x)=\sup_{n>j_0}\frac{|(\{t\in\mathbb{N}:\omega_t|f_t(x)-f(x)|> \varepsilon\}\setminus[1,j_0])\cap [1,n]|}{n}<\frac{\sigma}{2}.
$$
Now choose $n_0> j_0$ such that 
$$
(\forall x\in K)~ ~(\forall n> n_0),~\frac{|\{t\in\mathbb{N}:\omega_t|f_t(x)-f(x)|> \varepsilon\}\cap[1,j_0]|}{n}<\frac{\sigma}{2}.
$$
Consequently, $(\forall n>n_0)~(\forall x\in K)$, we have 
\begin{align*}
g_{n,\varepsilon}(x)&=\frac{|\{t \in \mathbb{N} : \omega_t|f_t(x) - f(x)| > \varepsilon\} \cap [1,n]|}{n}\\
&=\frac{|\{t \in \mathbb{N} : \omega_t|f_t(x) - f(x)| >\varepsilon\} \cap [1,j_0]|}{n}+\frac{|(\{t \in \mathbb{N} : \omega_t|f_t(x) - f(x)| > \varepsilon\} \cap [1,n])\setminus [1,j_0]|}{n}\\
&<\frac{\sigma}{2}+h_{j_0,\varepsilon}(x)<\frac{\sigma}{2}+\frac{\sigma}{2}=\sigma.
\end{align*}
This shows that the sequence of functions $\{g_{j,\varepsilon}\}_{j\in\mathbb{N}}$ converges uniformly to $0$, i.e., $\{f_t\}_{t \in \mathbb{N}}$ is  weighted equi-statistically convergent to $f$. 
\end{proof}

The subsequent example reveals that the conclusion of  Theorem \ref{thMm5.4} and Example \ref{example5.5} do not remain valid under the choice of the weighted sequence $\omega_t=\sqrt{t}$ for each $t\in\mathbb{N}$. 
\begin{example}\label{exwrong}
Let us choose any $f\in C^1[0,1]$ such that $|f'(x)|>0$ for all $x\in C[0,1]$. We set 
$$
\omega_t=\sqrt{t}~\mbox{ for each }~t\in\mathbb{N}.
$$ 
Then, in light of Theorem \ref{tachev} with $q=1$, we get that 
$$
\omega_t\cdot \left( B_t(f; x) - f(x) - B_t((e_1-x);x)f'(x) \right) \to 0~\mbox{ uniformly on}~[0,1].
$$
This also ensures that 
\begin{equation}\label{eq 9}
	B_t(f; x) - f(x) -B_t((e_1-x);x)f'(x)\to 0~~\quad(\omega-equi-st)
\end{equation}
We aim to show that
$$
L_t(f;x)\not\to f~~\quad(\omega-equi-st).
$$
Assume, on the contrary, that 
\begin{equation}\label{eq 10}
L_t(f;x)-f(x)=(B_t(f;x)-f(x))+h_t(x)B_t(f;x)\to 0 ~~\quad(\omega-equi-st)
\end{equation}
Note that $h_t(x)\neq 0$ for all $x\in (\frac{1}{2^t},\frac{1}{2^{t-1}})$. 
So, for a fixed $x\in[0,1]$, the function $h_t(x)$ is non-zero for at most one $t\in \mathbb{N}$. Therefore, observe that 
$$
\frac{1}{\theta_n}|\{t\leq \theta _n:\omega_t|h_t(x)B_t(f;x)|>\varepsilon\}|\leq \frac{1}{\theta_n}\to 0 ~\mbox{ as }~n\to \infty.
$$
Thus, we obtain that 
 $$
 h_t(x)B_t(f;x)\to 0\quad(\omega-equi-st).
 $$
 Since $L_t(f;x)-f(x)\to 0 ~(\omega-equi-st)$, Eq (\ref{eq 10}) entails that 
 \begin{equation}\label{eq 11}
  B_t(f;x)-f(x)\to 0 ~~\quad(\omega-equi-st).
 \end{equation}
 Moreover, note that 
 $$
 B_t((e_1-x);x)f'(x)= B_t(f;x)-f(x)-\left( B_t(f; x) - f(x) -B_t((e_1-x);x)f'(x) \right)
 $$
\end{example} 
Therefore, in view of Eq (\ref{eq 9}) and Eq (\ref{eq 11}), it follows that 
$$
 B_t((e_1-x);x)f'(x)\to 0 ~~\quad(\omega-equi-st),
$$
- which is a contradiction since $B_t((e_1-x);x)=x(1-x)$ and  $|f'(x)|>0$ for all $x\in [0,1]$. Thus we can conclude that 
$$
L_t(f;x)\not\to f~~\quad(\omega-equi-st)~\mbox{ for all $f\in C^1[0,1]$ with $|f'(x)|>0$ for all $x\in [0,1]$}.
$$\\
We are now in a position to refine Theorem~\ref{thMm5.4} in the context of weighted equi-ideal convergence, which generalizes the notion of weighted equi-statistical convergence as well as equi-statistical convergence.
	\begin{theorem}	\label{korokvingequiideal}
		Let $\mathcal{I}_{\varphi}$ be an analytic $P$-ideal, and let $\{\omega_t\}_{t \in \mathbb{N}}$ be a weighted sequence that is $\mathcal{I}_{\varphi}$-bounded. Consider a compact subset $K \subset \mathbb{R}$, and let $\{L_t : C(K) \to C(K)\}_{t \in \mathbb{N}}$ be a sequence of positive linear operators.
		 Then, for all $f \in C(K)$, 
	\begin{equation*}
	L_t(f) \to f \quad \text{($\omega-equi-\mathcal{I}_{\varphi}$)} \quad \text{on } K \tag{a}
\end{equation*}
	if and only if
\begin{equation}
		L_t(e_i) \to e_i \quad \text{($\omega-equi-\mathcal{I}_{\varphi}$)} \quad \text{on } K \quad \text{with } e_i(x) = x^i, \ i = 0, 1, 2\tag{b}.
\end{equation}
\end{theorem}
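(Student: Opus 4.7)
The necessity direction $(a) \Rightarrow (b)$ is immediate, since $e_0, e_1, e_2 \in C(K)$. The substance of the argument is in $(b) \Rightarrow (a)$, and the plan is to execute the classical Korovkin comparison in the pointwise estimate and then handle the $\omega_t$-multiplied version via the $\mathcal{I}_\varphi$-boundedness of $\{\omega_t\}_{t\in\mathbb{N}}$. This last step is precisely the correction to the flaw pointed out in the Note after Theorem~\ref{thMm5.4}.

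First I would fix $f \in C(K)$ with $M = \|f\|_{C(K)}$, pick $\varepsilon > 0$, use uniform continuity of $f$ on the compact set $K$ to choose $\delta > 0$ and obtain the standard pointwise majorization
\begin{equation*}
|f(y) - f(x)| \le \varepsilon + \frac{2M}{\delta^{2}}(y - x)^{2}, \qquad x, y \in K.
\end{equation*}
Applying the positive linear operator $L_t$ and using linearity gives, after expanding $L_t((e_1 - x)^2;x)$ in terms of $L_t(e_i;x) - e_i(x)$ for $i=0,1,2$ and absorbing the $|f(x)||L_t(e_0;x) - 1|$ term, a bound of the form
\begin{equation*}
|L_t(f;x) - f(x)| \le \varepsilon + C(\varepsilon) \sum_{i=0}^{2} |L_t(e_i;x) - e_i(x)|, \qquad x \in K,
\end{equation*}
where $C(\varepsilon)$ depends on $\varepsilon, M, \delta$ and $N := \sup_{x \in K}|x|$, but \emph{not} on $x$. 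Multiplying through by $\omega_t$ yields
\begin{equation*}
\omega_t |L_t(f;x) - f(x)| \le \varepsilon\, \omega_t + C(\varepsilon) \sum_{i=0}^{2} \omega_t |L_t(e_i;x) - e_i(x)|.
\end{equation*}

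The main obstacle, and the heart of the correction, is the term $\varepsilon\, \omega_t$: it cannot be discarded by shrinking $\varepsilon$, since $\omega_t$ may be unbounded. Here I would invoke the $\mathcal{I}_\varphi$-boundedness hypothesis to produce $\mu > 0$ with $A := \{t \in \mathbb{N} : \omega_t > \mu\} \in \mathcal{I}_\varphi$. Given an arbitrary threshold $\varepsilon' > 0$, I would choose $\varepsilon$ so that $\varepsilon\mu < \varepsilon'/2$, so that on the complement of $A$ the first summand is automatically controlled. Consequently,
\begin{equation*}
\{t : \omega_t |L_t(f;x) - f(x)| > \varepsilon'\} \subseteq A \cup \bigcup_{i=0}^{2} \left\{t : \omega_t |L_t(e_i;x) - e_i(x)| > \frac{\varepsilon'}{6\,C(\varepsilon)}\right\}.
\end{equation*}

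To finish, I would apply the submeasure $\varphi$ to the sets obtained after removing $[1,j]$, and use subadditivity of $\varphi$. The piece coming from $A$ gives $\varphi(A \setminus [1,j])$, which tends to $0$ as $j \to \infty$ because $A \in \mathcal{I}_\varphi = Exh(\varphi)$, and this convergence is trivially uniform in $x$ as it does not involve $x$. The three remaining pieces are controlled uniformly on $K$ by hypothesis $(b)$, since $L_t(e_i) \to e_i$ in $\omega$-equi-$\mathcal{I}_\varphi$ means precisely that the functions $x \mapsto \varphi(\{t : \omega_t|L_t(e_i;x) - e_i(x)| > \eta\} \setminus [1,j])$ converge uniformly to zero on $K$ for each $\eta > 0$. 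Summing, the function $h_{j,\varepsilon'}$ from \eqref{equiideal} for $\{L_t(f)\}$ converges uniformly to zero on $K$, which establishes $(a)$.
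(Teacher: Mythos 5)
Your proposal is correct and follows essentially the same route as the paper: the classical Korovkin pointwise majorization, multiplication by $\omega_t$, and the use of $\mathcal{I}_\varphi$-boundedness to split off the set $A=\{t:\omega_t>\mu\}$ so that the troublesome $\varepsilon\,\omega_t$ term is bounded by $\varepsilon\mu$ off $A$, with the residual contribution $\varphi(A\setminus[1,j])\to 0$ handled uniformly in $x$ because it does not depend on $x$. The only cosmetic difference is that the paper builds the factor $1/\mu$ into the continuity estimate from the outset rather than choosing $\varepsilon$ with $\varepsilon\mu<\varepsilon'/2$ at the end, which is an equivalent bookkeeping choice.
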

	\begin{proof}
		The forward implication (a) $\Rightarrow$ (b) is immediate, since each monomial $e_i \in C(X)$ for $i = 0, 1, 2$.

To establish the converse, assume that condition (b) holds. Since the weighted sequence $\{\omega_{t}\}_{t\in\mathbb{N}}$ is $\mathcal{I}_{\varphi}$-bounded, there exists $\mu>0$ such that 
$$
A=\{t\in\mathbb{N}:\omega_t>\mu\}\in\mathcal{I}_{\varphi}.
$$
Now, fix an arbitrary function $f \in C(K)$ and a point $c \in K$. By the continuity of $f$ at $c$, for any $\varepsilon > 0$, there exists a $\delta > 0$ such that
\[
|f(x) - f(c)| \leq \frac{\varepsilon}{\mu} \quad \text{whenever } |x - c| < \delta.
\]

Define the set $K_\delta := [c - \delta, c + \delta] \cap K$ and denote by $\chi_D$ the characteristic function of a set $D$. Then we can decompose
\[
|f(x) - f(c)| = |f(x) - f(c)| \chi_{K_\delta}(x) + |f(x) - f(c)| \chi_{K \setminus K_\delta}(x).
\]

This leads to the bound
\[
|f(x) - f(c)| \leq \frac{\varepsilon}{\mu} + 2B \frac{(x - c)^2}{\delta^2} \quad \text{for all } x \in K,
\]
where $B := \|f\|_{C(K)}$.

Using the positivity and linearity of the operators $L_t$, we obtain
\[
|L_t(f; c) - f(c)| \leq \frac{\varepsilon}{\mu} + (\frac{\varepsilon}{\mu} + B) |L_t(e_0; c) - e_0(c)| + \frac{2B}{\delta^2} L_t\left((x - c)^2; c\right).
\]

Expanding the quadratic term,
\[
(x - c)^2 = x^2 - 2cx + c^2 = e_2(x) - 2c e_1(x) + c^2 e_0(x),
\]
and applying $L_t$ yields
\[
L_t\left((x - c)^2; c\right) = L_t(e_2; c) - 2c L_t(e_1; c) + c^2 L_t(e_0; c).
\]

Consequently,
\begin{align*}
|L_t(f; c) - f(c)| \leq 
&\frac{\varepsilon}{\mu}+ \left(\frac{\varepsilon}{\mu} + B + \frac{2B c^2}{\delta^2} \right) |L_t(e_0; c) - e_0(c)|\\& 
+ \frac{4B c}{\delta^2} |L_t(e_1; c) - e_1(c)| + \frac{2B}{\delta^2} |L_t(e_2; c) - e_2(c)|.
\end{align*}

Thus, we can write
\begin{equation*}
	|L_t(f; c) - f(c)| \leq \frac{\varepsilon}{\mu} + B' \sum_{i=0}^2 |L_t(e_i; c) - e_i(c)|, 
\end{equation*}
where
\[
B' := \frac{\varepsilon}{\mu} + B + \frac{2B}{\delta^2} \left( \|e_2\|_{C(K)} + 2\|e_1\|_{C(K)} + 1 \right).
\]
This ensures that
$$
\omega_t |L_t(f; c) - f(c)| \leq \varepsilon + B' \sum_{i=0}^2 \omega_t|L_t(e_i; c) - e_i(c)|~\mbox{ for each}~n\in\mathbb{N}\setminus A. 
$$
Let us set 
\begin{align*}
&\Psi_j(c,\varepsilon)=\varphi(\{t\in\mathbb{N}\setminus A:\omega_t |L_t(f; c) - f(c)|>\frac{\varepsilon}{2}\}\setminus [1,j])\\
&\Psi_{i,j}(c,\varepsilon)=\varphi(\{t\in\mathbb{N}\setminus A:\omega_t |L_t(e_i; c) - e_i(c)|>\frac{\varepsilon}{6B'}\}\setminus [1,j])~\mbox{ for }~i=0,1,2.
\end{align*}
Since $c\in K$ was chosen arbitrarily and  $\varphi$  is monotone non-decreasing, we obtain that
\begin{align*}
\Psi_j(x,\varepsilon)\leq \displaystyle \sum_{i=0}^{2}\Psi_{i,j}(x,\varepsilon)~\mbox{for all $x\in K$ and for each }~j\in\mathbb{N}.
\end{align*}
By the hypothesis, it follows that the sequence of functions $\{\Psi_{i,j}(x,\varepsilon)\}_{j\in\mathbb{N}}$ is uniformly convergent to zero function on $K$ for each $i=0,1,2$. As a consequence, $\{\Psi_j(x,\varepsilon)\}_{j\in\mathbb{N}}$ converges uniformly to zero function on $K$. It now remains to show that the sequence of functions $\{h_j(x,\varepsilon)\}_{j\in\mathbb{N}}$, defined by 
$$
h_j(x,\varepsilon)=\varphi(\{t\in\mathbb{N}:\omega_t |L_t(f; x) - f(x)|>\frac{\varepsilon}{2}\}\setminus [1,j]),
$$
also converges uniformly to zero function on $K$. This holds since
$$
0\leq h_j(x,\varepsilon)\leq \Psi_j(x,\varepsilon)+\varphi(A\setminus [1,j])~\mbox{ and }~A\in\mathcal{I}_{\varphi}.
$$
Hence the implication (b) $\Rightarrow$ (a) holds as well.
\end{proof}













		\end{document}